\documentclass[11pt,letter]{article}

\usepackage[
    top=1in,
    bottom=1in,
    left=.7in,
    right=1.3in
]{geometry}

\usepackage{latexsym}
\usepackage[T1]{fontenc}
\usepackage{titlefoot}
\usepackage[utf8]{inputenc}
\usepackage[english]{babel}
\usepackage{amscd}
\usepackage{amsmath}
\usepackage{amsfonts}
\usepackage{amssymb}
\usepackage{amsthm}
\usepackage{graphicx}
\usepackage{cite}
\usepackage{comment}
\usepackage{dsfont}
\usepackage{authblk}
\usepackage{mathtools,nccmath}
\usepackage{esint}
\usepackage{xfrac}
\usepackage[short labels]{enumitem}
\usepackage{dsfont}
\usepackage[unicode]{hyperref}
\usepackage{array}
 

\usepackage{hyperref}
\hypersetup{
    colorlinks = true,
    linkcolor = blue
    }
    
\usepackage[
        noabbrev,
        capitalise,
        nameinlink,
    ]
    {cleveref}

 \usepackage{xcolor}

\renewcommand{\epsilon}{\varepsilon}
\newcommand{\ep}{\epsilon}

\newcommand{\R}{\mathbb R}

\newcommand\dps{\displaystyle}
\newcommand\be{\begin{equation}}
\newcommand\ee{\end{equation}}
\newcommand\bea{\begin{eqnarray}}
\newcommand\eea{\end{eqnarray}}
\newcommand\beaa{\begin{eqnarray*}}
\newcommand\eeaa{\end{eqnarray*}}
\newcommand\bay{\begin{array}}
\newcommand\eay{\end{array}}
\newcommand\ba{\begin{align}}
\newcommand\ea{\end{align}}

\newcommand{\blue}[1]{{\color{blue}#1}}

\newtheorem{theorem}{Theorem}[section]
\newtheorem{lemma}[theorem]{Lemma}

\newtheorem*{claim*}{Claim}
\newtheorem{proposition}[theorem]{Proposition}

\theoremstyle{definition}

\newtheorem{remark}[theorem]{Remark}

\numberwithin{equation}{section}

\title{{On the logarithmic correction of transition fronts in shifting environments}}
\author{King-Yeung Lam$^1$ and Chang-Hong Wu$^{2,3}$
\\
{\footnotesize $^1$\textit{Department of Mathematics, The Ohio State University, Columbus, OH, 43210, USA} }\\
{\footnotesize $^{2}$\textit{Department of Applied Mathematics, National Yang Ming Chiao Tung University,\\ {\em Hsinchu, 300, Taiwan}} }\\
{\footnotesize $^{3}$\textit{ National Center for Theoretical Sciences, Taipei, 106, Taiwan}}

}
\date{}

\begin{document}

\maketitle

\begin{abstract}
In this paper, we investigate the location of the spreading front and convergence to traveling wave profile of solutions to the Fisher-KPP equation on the real line where the environment function is piecewise constant and has a moving jump discontinuity. 
{Key ingredients in our analysis include an extension of Bramson's seminal work \cite{bramson1983convergence} to Fisher--KPP equations with growing domains,
together with a gluing technique for constructing super- and subsolutions.
}
\end{abstract}

 \unmarkedfntext{Keywords: Logarithmic delay, shifting environment, reaction-diffusion equations, Fisher equation.\newline\hspace*{1.55em}2010 Mathematics Subject Classification: 35B40, 35K57, 92D25.\newline\hspace*{1.55em}E-mail: lam.184@osu.edu, changhong@math.nctu.edu.tw}

\section{Introduction}

In this paper, we consider the spreading dynamics of the following reaction-diffusion equation in a shifting environment:
\begin{equation}\label{eq0}
    \begin{cases}
        u_t = Du_{xx} + \Big(g(x-X(t))-u\Big)u &\text{ for }t>0,~x\in\mathbb{R},\\
        u(0,x) =u_0(x), &
    \end{cases}
\end{equation}
where $u(t,x)$ represents the population density of an invasive species, 
$D$ represents the diffusion rate of $u$; the intrinsic growth function $g(y)$ describes the environmental heterogeneity  and is positive and piecewise constant. 
The population described by equation \eqref{eq0} is affected by a {\it shifting environment}, which can arise from models of climate change \cite{potapov2004climate,li2014persistence} or pathogen spread \cite{Fang2016can}. It also serves as a toy model for the slower species among several tree species that are invading the North American continent after the last ice age \cite{shigesada1997biological}. Mathematically, when $X(t) = \beta t$ for some constant $\beta>0$, it becomes
a special case of the higher dimensional cylinder problem investigated by Hamel \cite{hamel1997reaction}.

We are interested in determining conditions on $g$ and $X(t)$ to characterize the existence and properties of a function $m(t)$ such that 
\beaa
u(t,y+m(t))\to \Phi(y)\quad  \text{ in } C_{loc}(\mathbb{R}),
\eeaa
where $\Phi$ denotes certain traveling wave solution satisfying, for some  positive constants $c$ and $R$, 
\begin{equation}\label{e.TWTW}
c\Phi' + D \Phi'' + (R - \Phi)\Phi=0 \quad \text{ in }\mathbb{R}, \quad \Phi(-\infty)=R, \quad \Phi(+\infty) = 0.
\end{equation}

\subsubsection*{Propagation in homogeneous environments}

The equation \eqref{eq0} belongs to the class of so-called Fisher-KPP equations and, in the case $g\equiv R$ is a constant, can be traced back to the pioneering work of Fisher \cite{fisher1937wave} and Kolmogorov, Petrovskii and Piskunov \cite{kolmogorov1937study} on spatial propagation phenomena. It has been proved that, when $g\equiv {R}$ is a constant, the equation in \eqref{eq0} admits a unique (up to translation) traveling wave solution of the form $u(t,x)=\Phi(x-ct)$ with some wave speed $c$ (such $\Phi$ are positive solutions of \eqref{e.TWTW}) if and only if $c\geq c_{\min}:=2\sqrt{DR}$. Moreover, $c_{\min}$ coincides with the spreading speed of the solutions \cite{aronson2006nonlinear}; that is, for each solution of \eqref{eq0} with compactly supported initial data, we have
\[
\begin{cases}
\lim\limits_{t\to\infty}\sup\limits_{|x|<ct} |u(t,x)-R|=0\qquad &\mbox{for any $0<c<c_{\min}$;}\\
\lim\limits_{t\to\infty}\sup\limits_{|x|>ct} |u(t,x)|=0\quad &\mbox{for any $c>c_{\min}$.}
\end{cases}\]
Consider next the level set of the solution $\xi_{b}(t):= \sup\{x \geq 0:~ u(t,x) \geq {b}\}$. Then 
the definition of spreading speed implies that  for each ${b} \in (0,{R})$,
$$
\xi_{b}(t) = c_{\min} t + o(t).
$$
Here we use big-oh and little-oh notation, meaning that $b_t = o(a_t)$ and $\tilde{b}_t = O(a_t)$ to mean that 
$$
\lim_{t\to\infty} \frac{b_t}{a_t}=0 \quad \text{ and }\quad \limsup_{t\to\infty} \left|\frac{\tilde{b}_t}{a_t}\right| < \infty.
$$

The precise long-time behavior of the solution has been established by Bramson \cite{bramson1983convergence} using a probabilistic approach. When the initial data $u_0:\mathbb{R} \to \mathbb{R}$ is supported in $(-\infty, x_0]$, he proved that  there exists a function $\gamma:[0,\infty) \to \mathbb{R}$ such that $|\gamma(t)| \leq O(1)$ and the solution $u$ of \eqref{eq0} satisfies
\bea \label{ea.0813.1}
\lim_{t\to\infty}\sup_{x\in\mathbb{R}}\Big|u(t,x)-\Phi_{c_{\min}}\Big(x-c_{\min}t+\frac{3}{2 \lambda_{\min}} \log t+\gamma(t)\Big)\Big|=0,
\eea
where $\lambda_{\min}=c_{\min}/{(2D)}$ and $\Phi_{c_{\min}}$ is the traveling wave solution with $c=c_{\min}$. In fact, it is shown that $\gamma(t) \to x_{\infty}$ for some $x_{\infty}$ depending on initial data \cite[Section 8, Theorem 3]{bramson1983convergence}. 
This result was also obtained by Hamel, Nolen, Roquejoffre, and Ryzhik \cite{hamel2013short,nolen2017convergence} using a PDE-based approach. 
For further discussion, we also refer to \cite{an2023pushed,an2023quantitative,bouin2020bramson,giletti2022monostable,hamel2016logarithmic,lau1985nonlinear,uchiyama1978behavior} and references cited therein. When the initial data is not compactly supported, then the solution may converge to one of the traveling wave solutions with supercritical speed. For example, it follows from \cite[Theorem B]{bramson1983convergence} that if 
\[{u_0(x) = x^qe^{-\lambda x},\ x>x_0 \qquad \text{ for some } \quad x_0>0,~q \in \mathbb{R},~\lambda \in (0, \lambda_{\min}),}
\]
%
then the exponential decay rate $\lambda$ selects a 
speed {$c= D\lambda + \tfrac{R}{\lambda}$} with the corresponding
traveling solution $\Phi_{c}$, i.e. there exists a bounded function $\gamma(t)$ such that
$$
\lim_{t\to\infty}
\sup_{x\in\mathbb{R}}
\Big|u(t,x)-\Phi_{c}
\Big(x-c t-\frac{q}{\lambda} \log t+ \gamma(t)\Big)\Big|=0.
$$
Note that, in contrast with \eqref{ea.0813.1}, the location of the level set and particularly the coefficient in the logarithmic correction depend on the fine decay property of the initial data in a subtle way, {and the logarithmic correction parameter $q/\lambda$ can be either positive or negative.} {We also refer to the recent work \cite{alfaro2025bramson,Zhang2025}, in which novel PDE arguments are given for  these results.}

Logarithmic corrections can also be observed in the co-invasion dynamics of competitive systems, which are sometimes complicated by the questions of linear determinacy \cite{hosono1998minimal,huang2011non,alhasanat2019minimal} and nonlocal pulling phenomenon \cite{girardin2019invasion,liu2019asymptotic}. In addition, the case where the faster species invades under a logarithmic correction, and the second invasion wave is of the pushed type, may arise in the competition system \cite{peng2021sharp,wu2023sharp}.

\subsubsection*{Precise asymptotics due to Bramson}

Let $w(t,x)$ be the solution to
\begin{equation}\label{e.0401.2}
\begin{cases}
    w_t = w_{xx} + f(w) \quad &\text{ for }(t,x) \in (0,\infty)\times \mathbb R\\
    w(0,x)=w_0(x) &\text{ in }\mathbb{R},
\end{cases}
\end{equation}
where $f\in C^1_{loc}([0,\infty))$ satisfies, for some constants $R,B>0$, 
\bea\label{f-cond}
\begin{cases}
f(s)<0 \quad \text{ for }s > B,\quad  f(s)>0\quad \text{ for }0 < s < B,\\
f(0) = f(B) = 0,\quad    f'(0) =R\geq f'(s) \quad \text{ for }0 < s \leq B,
\end{cases}
\eea
and that
there exists $0<{\rho}\leq 1$ 
such that \begin{equation}\label{e.a.423.11}
|R-f'(s)| \leq  \frac{1}{{\rho}}s^{{\rho}} \quad \text{ for all }s \in [0,2B].   
\end{equation}
(we can make {$\rho$} smaller if necessary in the proofs),
and $w_0$ satisfies
\begin{equation}\label{w-ic-q}
0\leq w_0 \in L^\infty(\mathbb R),\quad  \liminf_{x\to-\infty}w_0(x)>0\quad \text{ and } \quad 
w_0(x)= {x^q e^{-\lambda x}}\quad  \text{ for }x \geq x_0
 \end{equation}
 for some $q\in\R$, 
 {$\lambda>0$ and }
 $x_0 \geq 1$. 
 We collect the assumptions for this section here.
 \begin{description}
     \item[(F)]  Let $f$ satisfy \eqref{f-cond} and \eqref{e.a.423.11} for some constants $B,R>0$, 
     \item[(W0)] Let $w_0$ satisfy \eqref{w-ic-q} for some $q\in \mathbb{R}$, {$\lambda>0$} and $x_0\geq 1$.
 \end{description}


For each $\lambda>0$, we define 
\bea\label{c-lambda-2}
c_{\lambda}:=\lambda+\frac{R}{\lambda}\geq 2\sqrt{R},
\eea
where the equality holds if and only if $\lambda=\sqrt{R}$.
{We denote by $\Phi_{\lambda,R}$ a traveling wave solution satisfying\footnote{{We suppressed the dependence of $\Phi_{\lambda,R}$ on $B,f$ for clarify.}}
\begin{equation}\label{TW-B}
\begin{cases}
c_\lambda \Phi' + \Phi'' +f(\Phi) =0\quad \text{ for }z \in \mathbb{R},\\
\Phi(-\infty) = B, \qquad  \Phi(+\infty) = 0.
\end{cases}
\end{equation}
Since $\Phi_{\lambda,R}$ is unique up to translation, we choose its translation to absorb the constant phase shift in Bramson's asymptotics.
}
We recall the following two seminal results of Bramson. Note that $q$ can be positive or negative.

\begin{theorem}[{\cite[Theorem A and Example~1]{bramson1983convergence}}]\label{lem:bramson-pull}
Assume {\bf{(F)}}, {\bf{(W0)}} hold with corresponding constants $R>0$, $\lambda\in(0,\sqrt{R})$ and $q \in \mathbb{R}$.
Then the solution $w$ of \eqref{e.0401.2} with initial data $w_0$ satisfies
\begin{equation*}
{\sup_{ x \in \R}} ~ \big|w(t,x )  -\Phi_{\lambda,R}(x- m_{\lambda,q}(t)) \big|  \to 0\quad \text{ as }t\to\infty,
\end{equation*}
where $m_{\lambda,q}$ depends only on $\lambda$ and $q$, and 
\bea \label{m^w}
m_{\lambda,q}(t) := c_\lambda t + \frac{q}{\lambda }\log\left((c_\lambda - 2\lambda)t\right) \quad \text{ with }\quad c_{\lambda}=\lambda + \frac{R}{\lambda}.
\eea
\end{theorem}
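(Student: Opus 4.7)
The plan is to prove the $L^\infty$ convergence by a squeeze argument: use the KPP linearization to identify the front location $m_{\lambda,q}(t)$, show the leading-edge profile matches the tail of $\Phi_{\lambda,R}$, and pair this with a subsolution argument to propagate equality through the full profile.

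First I would analyze the linearized equation. Since \eqref{f-cond} gives $f(s)\leq Rs$ for $s\in[0,B]$, the function $v$ solving $v_t=v_{xx}+Rv$ with $v(0,\cdot)=w_0$ is a supersolution, so $w\leq v$ by the parabolic maximum principle. The heat-kernel representation yields
\[
v(t,x)=e^{Rt}\int_{\mathbb R}\frac{e^{-(x-y)^2/(4t)}}{\sqrt{4\pi t}}\,w_0(y)\,dy.
\]
Splitting the integral over $(-\infty,x_0)$ (bounded part) and $[x_0,\infty)$ (tail $y^q e^{-\lambda y}$), and completing the square via
\[
\frac{(x-y)^2}{4t}+\lambda y=\frac{(y-(x-2\lambda t))^2}{4t}+\lambda x-\lambda^2 t,
\]
so that (using $R+\lambda^2=\lambda c_\lambda$) the tail contribution becomes
\[
e^{-\lambda(x-c_\lambda t)}\int_{x_0}^{\infty}\frac{e^{-(y-(x-2\lambda t))^2/(4t)}}{\sqrt{4\pi t}}\,y^q\,dy.
\]
At the putative front $x=m_{\lambda,q}(t)+\xi$ with $\xi$ bounded, the Gaussian concentrates at $y\approx(c_\lambda-2\lambda)t+O(\log t)\gg\sqrt{t}$, where the last inequality uses $\lambda<\sqrt{R}$ so that $c_\lambda-2\lambda>0$. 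Hence $y^q$ is uniformly $\sim((c_\lambda-2\lambda)t)^q$ on the effective support of the Gaussian, and the choice of $m_{\lambda,q}(t)$ is precisely the one that makes $e^{-\lambda(x-c_\lambda t)}((c_\lambda-2\lambda)t)^q=e^{-\lambda\xi}$. The bounded part $(-\infty,x_0)$ is negligible at the front (the heat kernel hits it with exponentially small weight). This proves $v(t,m_{\lambda,q}(t)+\xi)\to e^{-\lambda\xi}$ uniformly for $\xi$ in bounded sets.

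Next I would promote the leading-edge estimate to full $L^\infty$ convergence. For the upper bound, the estimate $w(t,m_{\lambda,q}(t)+\xi)\leq e^{-\lambda\xi}(1+o(1))$ matches the $\xi\to+\infty$ asymptotics of $\Phi_{\lambda,R}$; combined with parabolic regularity and compactness, any subsequential limit of $w(t,m_{\lambda,q}(t)+\cdot)$ is a bounded entire solution of $w_t=w_{xx}+f(w)$ sandwiched below $e^{-\lambda\xi}$, which by the classical Liouville-type / KPP stability theory must equal $\Phi_{\lambda,R}$. For the lower bound, I would exploit \eqref{e.a.423.11} in the form $f(s)\geq Rs-\rho^{-1}s^{1+\rho}$ to build a subsolution $\underline v$ close to $v$ but corrected for the super-linear damping — either by subtracting an appropriately decaying corrector, or by gluing a truncated translate $\Phi_{\lambda,R}(\cdot-m_{\lambda,q}(t)-C)$ (valid behind the front) to a cut-off solution of the linearized problem (valid at the front). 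Checking that the glued function is a subsolution yields $w(t,m_{\lambda,q}(t)+\xi)\geq\Phi_{\lambda,R}(\xi)-o(1)$, and together with the upper bound this gives the sup-norm convergence.

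The main obstacle is constructing a subsolution sharp enough to pin down the logarithmic correction $(q/\lambda)\log((c_\lambda-2\lambda)t)$, especially when $q<0$, in which case the front actually runs \emph{ahead} of $c_\lambda t$ and naive traveling-wave subsolutions lag too much. Because the prefactor $y^q$ is polynomial rather than exponential, one cannot simply match $\underline v$ to a translate of $\Phi_{\lambda,R}$; one must track how the polynomial weight transports under the linear flow and verify that the super-linear damping from \eqref{e.a.423.11} perturbs the front position by only $o(1)$. This is exactly where Bramson originally invoked Feynman–Kac and branching Brownian motion; a purely PDE argument (in the spirit of \cite{hamel2013short,nolen2017convergence,alfaro2024bramson,Zhang2025}) will instead rely on Harnack-type estimates near the front together with the local stability of the pulled traveling wave $\Phi_{\lambda,R}$.
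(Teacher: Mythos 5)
This theorem is not proved in the paper: it is cited verbatim from Bramson \cite[Theorem~A, Example~1]{bramson1983convergence}, and the paper's argument elsewhere treats it as an input (compare Lemmas~\ref{lem:limsup-case}--\ref{lem:liminf-case}, which compare the growing-domain solution $u$ against translates of the whole-line solution $w$ whose behavior is supplied by this theorem). What you have written is a sketch of the alternative PDE-based route mentioned in the introduction (\cite{hamel2013short,nolen2017convergence,alfaro2024bramson,Zhang2025}), so there is no "paper's proof" to compare to step by step; what I can do is assess the sketch on its own terms.

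Your treatment of the linear problem is sound: completing the square, using $R+\lambda^2=\lambda c_\lambda$, observing that the Gaussian concentrates near $y\approx(c_\lambda-2\lambda)t$ (valid because $\lambda<\sqrt R$ gives $c_\lambda-2\lambda=(R-\lambda^2)/\lambda>0$), extracting the factor $((c_\lambda-2\lambda)t)^q$, and checking that the $(-\infty,x_0)$ piece is exponentially negligible since $R<c_\lambda^2/4$. This is exactly the calculation underlying the paper's Lemma~\ref{lem:Bramson-0813} and correctly pins down the shift $m_{\lambda,q}(t)$. However, two things need repair. First, a sign slip: it is $q>0$, not $q<0$, that pushes $m_{\lambda,q}(t)$ \emph{ahead} of $c_\lambda t$; for $q<0$ the front lags. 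Second, and more seriously, the compactness-plus-Liouville step as written is not enough. You argue that any subsequential limit $W$ of $w(t,m_{\lambda,q}(t)+\cdot)$ is an entire solution sandwiched \emph{below} $e^{-\lambda\xi}$ and that this forces $W=\Phi_{\lambda,R}$; but $W\equiv 0$ also satisfies that one-sided constraint, so the upper bound alone does not rigidify the limit. You genuinely need the matching lower bound $W\geq\Phi_{\lambda,R}(\xi-\varepsilon)$ for every $\varepsilon>0$ to invoke a Liouville theorem (as the paper does with \cite[Theorem~3.5]{beresycki2007generalized} in other proofs). That lower bound is precisely the subsolution construction you flag as the "main obstacle" and then leave unbuilt. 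So the plan identifies the right ingredients and correctly isolates the hard part, but that hard part is where the entire content of Bramson's theorem lives: tracking the polynomial weight under the linear flow so precisely that the nonlinear damping only contributes $o(1)$ to the front location. Until the subsolution (or, equivalently, the two-sided comparison $w/\psi\to1$ near the front, as in the paper's Proposition~\ref{lem:w-estimate} but on the whole line) is actually produced, the proposal is an outline rather than a proof.
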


\begin{theorem}[{\cite[Theorem A,  Example~3 and Example~4]{bramson1983convergence}}]
\label{lem:bramson-no-pull}
Assume {\bf{(F)}}, {\bf{(W0)}} hold with corresponding constants $R>0$, $\lambda=\sqrt{R}$, $q\in\mathbb R$.  
Let $w$ be the solution of \eqref{e.0401.2} with initial data $w_0$.
Then there exists a bounded function $\Lambda(t)$ such that
\begin{equation*}
{\sup_{ x\in\R}}  |w(t,x )  -\Phi_{\lambda,R}(x- {\tilde{m}_{q}}(t)-\Lambda(t)) |  \to 0\quad \text{ as }t\to\infty,
\end{equation*}
where \begin{equation}\label{m^w-2}
\tilde{m}_q(t) := 
\begin{cases}
c_{\min} t - \frac{3}{2\lambda_{\min}} \log t , &\text{ if }q< -2,\\
c_{\min} t - \frac{3}{2\lambda_{\min}} \log t + \frac{1}{\lambda_{\min}} \log \log t &\text{ if }q= -2,\\
c_{\min} t  + \frac{q-1}{2\lambda_{\min}} \log t &\text{ if }q > -2,
\end{cases}
\end{equation}
where $c_{\min}=2\sqrt{R}$ and $\lambda_{\min}=\sqrt{R}$.
\end{theorem}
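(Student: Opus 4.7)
The plan is to adapt Bramson's analysis via a Hopf--Cole-type change of variables, matched Dirichlet heat-kernel asymptotics, and a super/sub-solution gluing at the back of the wave. First I set $v(t,z):=e^{\lambda_{\min} z}w(t,z+c_{\min}t)$. Since $\lambda_{\min}=\sqrt{R}$ is the critical exponent for $c_{\min}=2\sqrt{R}$, a direct computation together with \eqref{e.a.423.11} yields
\[
v_t-v_{zz}=e^{\lambda_{\min} z}\bigl[f(w)-Rw\bigr],\qquad |v_t-v_{zz}|\le \tfrac{1}{\rho}\,e^{-\rho\lambda_{\min} z}v^{1+\rho},
\]
so $v$ essentially satisfies the heat equation in the linear region $z\gg 1$, with initial datum $v(0,z)=z^q$ for $z\ge x_0$ and $v(0,z)=e^{\lambda_{\min} z}w_0(z)\to 0$ as $z\to-\infty$. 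Since the critical traveling wave obeys $\Phi_{\lambda_{\min},R}(\xi)\sim(A\xi+B)\,e^{-\lambda_{\min}\xi}$ as $\xi\to+\infty$, the target conclusion is equivalent to $v(t,z)\sim A\,e^{\lambda_{\min}z_*(t)}\,(z-z_*(t))$ as $z-z_*(t)\to+\infty$, where $z_*(t)$ is the (unknown) location of the back of the linear region in the moving frame; identifying $z_*(t)$ up to $O(1)$ will yield $\tilde m_q$, and the remaining $O(1)$ offset will become $\Lambda$.

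Next I would approximate $v$ by the solution of the heat equation on $(z_*(t),\infty)$ with an absorbing Dirichlet wall at $z_*(t)$, which mimics the rapid transition of $w$ to its plateau. The half-line Dirichlet kernel $\tfrac{2}{\sqrt{4\pi t}}\,e^{-(z^2+y^2)/(4t)}\sinh\!\bigl(zy/(2t)\bigr)$, combined with the expansion $\sinh(zy/(2t))\sim zy/(2t)$ valid for $z,y=o(\sqrt{t})$, yields the asymptotic
\[
v(t,z)\sim \frac{z-z_*(t)}{\sqrt{4\pi}\,t^{3/2}}\int_{x_0}^{\sqrt{t}} y^{q+1}e^{-y^2/(4t)}\,dy.
\]
This integral scales like $t^{(q+2)/2}$ when $q>-2$ (integrable at both endpoints), like $\tfrac{1}{2}\log t+O(1)$ when $q=-2$ (logarithmic divergence at $y=0$ cut off by the nonlinear zone and at $y=\infty$ by the Gaussian factor), and like a finite positive constant when $q<-2$ (dominated by $y=O(1)$). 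Equating the resulting amplitudes $Ct^{(q-1)/2}$, $Ct^{-3/2}\log t$, and $Ct^{-3/2}$ with $A\,e^{\lambda_{\min}z_*(t)}$ and solving for $z_*(t)$ reproduces exactly the three formulas of \eqref{m^w-2}.

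To upgrade this matching into a proof I would construct global super- and subsolutions by gluing: behind the wall ($z<z_*(t)-M$) patch in $\Phi_{\lambda_{\min},R}(\cdot-z_*(t)\mp\varepsilon)$; on an intermediate region use the explicit Dirichlet heat-kernel solution with the slightly shifted wall $z_*(t)\mp\varepsilon$; and far ahead of the front use a Gaussian barrier. Smoothness of $f$, combined with the polynomial bound $|f(s)-Rs|\le\rho^{-1}s^{1+\rho}$ that controls the perturbation $e^{-\rho\lambda_{\min} z}v^{1+\rho}$, ensures each piece is a sub- (resp.\ super-) solution up to errors absorbed at the junctions, and the comparison principle then pins down $z_*(t)$ up to the bounded $\Lambda(t)$. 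The main obstacle is the borderline case $q=-2$: the matching integral diverges only logarithmically at both endpoints, so the $\log\log t$ correction comes from a delicate balance between the lower cutoff (set by where the nonlinear zone begins, which itself depends on the unknown $z_*(t)$) and the upper Gaussian cutoff at $\sqrt{t}$. One must therefore iterate the matching self-consistently and verify that the errors from the gluing and from the $O(w^{1+\rho})$ perturbation remain strictly smaller than $\log\log t$; analogous but easier consistency checks suffice in the other two regimes.
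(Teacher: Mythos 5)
The paper does not prove this statement; it is quoted verbatim from \cite[Theorem~A, Examples~3 and~4]{bramson1983convergence} and used as a black-box input (e.g.\ in Lemmas~\ref{lem:limsup-case} and~\ref{lem:liminf-case}). So there is no internal proof of the paper to compare against; the relevant comparison is with Bramson's memoir, whose proof is probabilistic (Feynman--Kac representation, Brownian bridge and barrier estimates). Your sketch is a PDE re-derivation in the spirit of \cite{hamel2013short,nolen2017convergence}, a genuinely different route; those papers treat compactly supported data ($q=-\infty$, effectively), so the $q$-dependent extension is not directly in the literature you are implicitly drawing on.

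The algebra is right where it can be checked. The transformation $v=e^{\lambda_{\min}z}w(\cdot,\cdot+c_{\min}t)$ gives exactly $v_t-v_{zz}=e^{\lambda_{\min}z}\bigl[f(w)-Rw\bigr]$, the bound via \eqref{e.a.423.11} is as you state, and the scaling of $\int_{x_0}^{\sqrt{t}}y^{q+1}e^{-y^2/(4t)}\,dy$ does change character precisely at $q=-2$, reproducing all three entries of \eqref{m^w-2}. Two issues are worth flagging. First, your Dirichlet-kernel display mixes conventions: the prefactor is written as $z-z_*(t)$ (wall at $z_*(t)$) but the integrand $y^{q+1}$ has no $y-z_*(t)$ shift (wall at a fixed point). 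For $q>-2$ this is immaterial since the integral is dominated by $y\sim\sqrt{t}\gg|z_*(t)|$. For $q\le-2$ the dominant $y$ are $O(1)$ while $|z_*(t)|\sim\log t$, so the two conventions give different answers: a literal wall at $z_*(t)$ produces the weight $(y-z_*(t))v_0(y)$, whose cross term $|z_*(t)|\int v_0\,dy$ is of the same order as your integral for $q=-2$ and strictly larger for $q<-2$, and a naive amplitude match then produces a spurious $+\frac{1}{\lambda_{\min}}\log\log t$ correction for $q<-2$, contradicting the theorem. So the self-consistency/iteration issue you flag for $q=-2$ is in fact also present for $q<-2$, and resolving it correctly requires keeping track of the moving boundary rather than freezing it; concretely, passing to self-similar variables $\tau=\log t$, $\zeta=z/\sqrt{t}$ and projecting against the Dirichlet ground state $\zeta e^{-\zeta^2/8}$, as in \cite[Lemma~2.2]{hamel2013short} and the paper's Appendix (Lemma~\ref{e.a.lem.1}), is what isolates the correct power of $t$ without the spurious logarithm. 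Second, the conclusion claims $\sup_{x\in\mathbb{R}}|w-\Phi(\cdot-\tilde m_q(t)-\Lambda(t))|\to0$, which is stronger than pinning down a level set; your gluing produces ordered super-/sub-solutions, but upgrading to uniform convergence of the profile still requires a Liouville-type step as in the proof of Theorem~\ref{thm1} (via \cite[Theorem~3.5]{beresycki2007generalized}), which the sketch does not state.
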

We remark that $O(\log\log t)$ corrections for a class of Heaviside-like initial functions was also obtained by Uchiyama \cite{uchiyama1978behavior}.

\subsection{Propagation in a heterogeneous, shifting environments}\label{subsect:1.2}

The initial value problem \eqref{eq0} with $X(t)=\beta t$ was first introduced to study the impact of climate change on the persistence of species.
This class of models can be traced back to Potapov and Lewis \cite{potapov2004climate}, as well as Berestycki et al. \cite{berestycki2009can}. See also \cite{berestycki2008reaction, berestycki2009reaction} for extensions to higher spatial dimensions. 

The shifting environment also arise from another perspective in the work of Holzer and Scheel \cite{holzer2014accelerated}, where they examined a partially decoupled two-component system. In such a system, the faster species  generates an effective shifting environment for the slower species.
See also \cite{ducrot2021asymptotic,dong2021persistence,girardin2019invasion,liu2021stacked}  for related works on competition or prey-predator
systems in homogeneous environments.

The effect of imposing explicit shifting environments in the coefficients has also been 
considered in the contexts of competition systems \cite{yuan2019spatial} and problems with nonlocal diffusion \cite{li2018spatial,wu2019spatial}. See also the recent survey article \cite{wang2022recent} for recent literature on the subject.

The spreading dynamics of \eqref{eq0} was first considered by  Li et al. \cite{li2014persistence} under the assumption that $g$ is continuous and increasing with $g(-\infty)<0<g(+\infty)$. Alternative  conditions on $g$ are explored in \cite{hu2020spreading,Lam2022asymptotic}.  Particularly,
 a class of functional differential equations including \eqref{eq0} with $\sup_{\mathbb{R}}g\leq \max\{g(\pm\infty)\}$ was considered in  \cite{Lam2022asymptotic} with the Hamilton-Jacobi approach.

Particularly, when $g$ is strictly positive in $\mathbb{R}$, a new kind of spreading phenomena emerges. For instance, if $g$ is strictly increasing, an initially compactly supported population can spread at a supercritical speed $2\sqrt{g(-\infty)}<c_*<2\sqrt{g(+\infty)}$, that is not captured by the intuition from the limiting homogeneous environments as $x \to \pm \infty$. In this case, $c_*$ is referred to as the non-local pulling speed \cite{girardin2019invasion,holzer2014accelerated}. On the other hand, if $\sup_{\mathbb{R}}g>\max\{g(\pm\infty)\}$, the equation of \eqref{eq0} admits forced traveling wave solutions that propagate in locked step with the environmental shifting (see \cite{berestycki2018forced,holzer2014accelerated}).  
{The notion of forced waves studied in \cite{berestycki2018forced} is closely related to an earlier work of Hamel et al. \cite{hamel1997reaction}, where higher-dimensional reaction-diffusion problems in media without translation invariance are considered.}
Recently, an analytical framework is developed to give a unified treatment connecting the spreading result for monotone or non-monotonic environmental profile $g$ \cite{lam2024asymptoticspreadingkppreactive} which is based on the newly introduced notion of flux-limited solutions of Hamilton-Jacobi equations \cite{Imbert2017quasi}. An alternative approach based on comparison principle can be found in \cite{girardin2024spreading}.

It can therefore be said that the spreading property to \eqref{eq0} is by now well understood at the level of the spreading speed, i.e. the level set can be determined up to an error of order $o(t)$ across general settings. Our goal here is to establish the precise logarithmic correction of the level set to control the error to $O(1)$, and to show in addition the convergence of {the} solution to the traveling wave profile.
Precisely, we focus on the following problem:
\begin{equation}\label{main-eq0}
    \begin{cases}
        u_t = Du_{xx} + u(r(t,x)-u) &\text{ for }t>0,~x\in\mathbb{R},\\
        u(0,x) = u_0(x), &
    \end{cases}
\end{equation}
where the intrinsic growth rate $r$ depends on the parameters $R_+,R_->0$ and ${\beta}, \eta\in \mathbb{R}$, and is given by
\beaa
r(t,x):=
\begin{cases}
    R_+ &\text{ for }x > X(t),\\
    R_- &\text{ for }x \leq X(t),
\end{cases}
\eeaa
i.e., the environment is piecewise constant and has a jump discontinuity at $x=X(t)$, which is defined throughout this paper as
\bea\label{X_eta}
X(t)=\beta t -\eta \log (t+1),\quad t\geq0.
\eea
Particularly, the case $\eta\neq0$ is motivated by the co-invasion of competing species in homogeneous environments \cite{holzer2014accelerated,girardin2019invasion,liu2021stacked}. Specifically, the faster species has been shown to exhibit the logarithmic delay of the form $\frac{3}{2\lambda}\log t$ {\cite{peng2021sharp,wu2023sharp}}, so the effect on the invasion of the slower species, which is ``chasing behind", 
can be approximated by the scalar equation \eqref{main-eq0}.


We are interested in the dynamics, including the nonlocal pulling phenomenon, which arises when {$R_+>R_-$}. By non-dimensionalization, we may assume $D=1$, $R_+ =1$ and $R_- = 1-a$ for some $a \geq 0$.
Therefore, \eqref{main-eq0} recdues to 
\begin{equation}\label{main-eq}
    \begin{cases}
        u_t = u_{xx} + u(1-a\chi_{(-\infty,X(t)]}-u) &\text{ for }t>0,~x\in\mathbb{R},\\
        u(0,x) = u_0(x), &
    \end{cases}
\end{equation}
where $X(t)=X_{\beta,\eta}(t)$ is defined in \eqref{X_eta} and $u_0$ satisfies $0\leq u_0\leq1$ and 
\bea \label{e.bcc}
\liminf_{x\to-\infty}u_0(x)>0, \quad u_0(x)=0 \quad \text{ for all } x\geq x_0. 
\eea
for some $x_0>0$.

For given 
{$\beta\in\mathbb{R}$,}
$\eta\in \mathbb{R}$ and $0 < a < 1$, 
it is well known from \cite{holzer2014accelerated,Lam2022asymptotic}  that the spreading speed $c_*$ of \eqref{main-eq} exists and can be determined exactly as follows:
\bea\label{c*}
{c}_* =\begin{cases}
2 &\text{ if } \beta \le 2,\\
\dps 
\lambda_*+\frac{1-a}{\lambda_*}
&\text{ if } 2 < \beta < 2(\sqrt a +\sqrt{1-a}),\\
2\sqrt{1-a}  &\text{ if }  \beta \geq 2(\sqrt a +\sqrt{1-a}),
\end{cases}
\eea
where the exponent $\lambda_*\in (1-\sqrt{a}, \sqrt{1-a})$ is independent of $\eta$ and is given by
\bea\label{lambda*}
 \lambda_*=\frac{\beta}{2}-\sqrt{a} .
 \eea

Indeed, for the nonlocal pulling case ($2< \beta < 2(\sqrt a + \sqrt{1-a})$), the solution $u$ to \eqref{main-eq} behaves differently in the regions in front and behind the shifting discontinuity $X(t)$. Roughly speaking,
\begin{equation}
    u(t,x) \approx \begin{cases}
        \frac{1}{\sqrt{4\pi t}} e^{- \tfrac{x^2}{4t} + t + o(t)}  & \text{ for }x > X(t) + \delta t,\\
        e^{ - \lambda_*(x-c_* t) + o(t)}& \text{ for }c_*t + \delta t <x <  X(t) - \delta t .
    \end{cases}
\end{equation}
Thus the selection of supercritical spreading speed $c_*$ is understood via the selection of an {\it effective exponent} $\lambda_*$. This can be achieved via 
constructing super/subsolutions 
\cite{girardin2019invasion} or via the uniqueness of the limiting Hamilton-Jacobi equations \cite{Lam2022asymptotic,lam2024asymptoticspreadingkppreactive} satisfied by the rate function 
$$
w(t,x)=\lim_{\ep \to 0} - \ep \log u\left( \tfrac{t}{\ep}, \tfrac{x}{\ep}\right).
$$
However, the latter approach cannot capture fine asymptotics such as  Bramson's logarithmic correction. 

\subsubsection*{Our analytical approach}
In this paper, we will derive sharp estimates of $u(t,X(t))$ by constructions of sharp super- and subsolutions, using particularly the idea of Hamel et al. \cite{hamel2013short}, in connection with the solution of the linear heat equation in 
{the evolving}
domain $\{(t,x):~ x > X(t)\}$ ahead of the shifting discontinuity. Such super/subsolutions can then be glued to different solutions to the homogeneous problem in {the evolving}
domain $\{(t,x):~ x < X(t)\}$ behind the shifting discontinuity. It is essential that the environment is piecewise constant for our arguments to be applicable.

For the remainder of the article,  we fix an arbitrary constant 
{$b>0$}
and denote the location of the invasion front by
\bea\label{location-u}
\xi_{b} (t):= \sup\{x \geq 0:~ u(t,x) \geq {b}\}.
\eea
In view of the formula \eqref{c*}, we will discuss the following cases of $X(t) = \beta t - \eta \log (1+t)$:

\begin{itemize}
{
\item[{\bf(A0)}] (Spreading ahead of environment) $(\beta,\eta) \in  (-\infty,2)\times \mathbb{R}$ or $(\beta,\eta) \in \{2\}\times [\tfrac32, \infty)$.

    \item[{\bf(A1)}] (Supercritical pulling) 
   $(\beta,\eta) \in  (2,2(\sqrt{a} + \sqrt{1-a}))\times \mathbb{R}$ or $(\beta,\eta) \in \{2\}\times (-\infty, 3/2)$. 
   }
    \item[{\bf(A2)}]  (Critical pulling) $\beta =  2(\sqrt{a} + \sqrt{1-a})$. 
    \item[{\bf(A3)}] (No pulling) $\beta> 2(\sqrt{a}+\sqrt{1-a})$,
\end{itemize}


To investigate the location of the invasion front of $u(t,x)$, we relate it to the behavior of solutions of the KPP equations with a moving boundary, as described below.

\subsubsection*{Main results for the single equation with shifting environments.}

In this subsection, we fix $0<a<1$ in \eqref{main-eq} and let $u$ be a solution of \eqref{main-eq}-\eqref{e.bcc}.  We investigate the dependence of the front location of the solution to \eqref{main-eq} on the shifting  discontinuity 
\bea \label{e.X(t)}
X(t):=\beta t-\eta \log (t+1), \quad t\geq0
\eea
for some 
{$\beta \in \mathbb{R}$}
and $\eta \in\R$. 

In this subsection, we continue to use the notation $\Phi_{\lambda,R}$ for the traveling wave solution associated with the logistic nonlinearity $f(s)=s(R-s)$ for either $R = 1-a$ or $1$.
Let $R>0$ and $\lambda\in(0,\sqrt{R}]$. Set $c_{\lambda}:=\lambda+R/\lambda$.
We denote by 
$\Phi_{\lambda,R}$ the traveling wave solution satisfying
\begin{equation}\label{TW0}
\begin{cases}
c_\lambda \Phi' + \Phi'' +\Phi(R-\Phi) =0\quad \text{ for }z \in \mathbb{R},\\
\Phi(-\infty) = R, \qquad  \Phi(+\infty) = 0,
\end{cases}
\end{equation}
with a suitable spatial translation such that
\bea\label{TW-AS0}
\begin{cases}
\lim\limits_{z \to +\infty} e^{\lambda z}\Phi(z) = 1,& \quad \text{ if } c_{\lambda}>2\sqrt{R}\quad (\lambda\in(0,\sqrt{R})),\\
\lim\limits_{z \to +\infty} z^{-1}e^{\lambda z}\Phi(z) = 1,& \quad \text{ if } c_{\lambda}=2\sqrt{R}=:c_{\min} 
\quad (\lambda=\sqrt{R}).
\end{cases}
\eea
For convenience, we write
\[
\Phi_{\min,R}:=\Phi_{\sqrt R,R}.
\]

First, we state the case {\bf(A0)}, when the population keeps up with or surpasses the shifting environment, where the classical result  of \cite{bramson1983convergence} suggests that 
$$
\xi_{b}(t) = 2t - \frac{3}{2} \log t + O(1).
$$
However, the situation is subtle when $(\beta,\eta) =(2,\tfrac32)$, since then  the level set can be in front of or behind the environmental shift $X(t)$.

\begin{theorem}\label{thm:standard}
Assume $X(t) = \beta t - \eta \log (1+t)$ with either
\begin{itemize}
\item[{\rm(i)}] $\beta<2$; or 
\item[{\rm(ii)}] $\beta=2$ and $\eta \in [3/2,\infty)$.
\end{itemize}
Then for each $b \in (0,1)$, the level set $\xi_{b}(t)$ defined in \eqref{location-u} satisfies
\begin{equation}\label{standard-level-set}
 \xi_{b}(t)=2 t - \frac{3}{2}\log t+O(1),
\end{equation}
Moreover, if $\beta<2$ or $\beta =2$ and $\eta > \tfrac32$, then there exists a bounded function $\Lambda(t)$ such that for any $\delta>0$, 
\bea\label{standard-cov}
\sup_{x \geq X(t) + \delta \log t }\Big|u(t,x) - \Phi_{\min,1} \Big(x-2 t +\frac{3}{2}  \log t+\Lambda(t)\Big)\Big| \to 0 \text{ as } t\to\infty.
\eea
where $\Phi_{\min,1}$ is a critical traveling  wave\footnote{In the critical case $(\beta,\eta) = (2,\tfrac32)$, then we expect the solution to converge to a forced wave $\Psi$ satisfying
\[
\begin{cases}
2\Psi' + \Psi'' + \Psi(1- a\chi_{(-\infty,0]} - \Psi) = 0 &\text{ for }s \in \mathbb{R},\\
\Psi(-\infty) = 1-a, \quad \text{ and }\quad \Psi(+\infty) =0.\end{cases}
\]
}  satisfying \eqref{TW0}.
%
%
\end{theorem}

Next, we state the result in the supercritical pulling case {\bf (A1)}.

\begin{theorem}\label{thm:pulling}
Assume $X(t) = \beta t - \eta \log (1+t)$ with either
\begin{itemize}
\item[{\rm(i)}] $\beta \in \big(2,\,2(\sqrt a +\sqrt{1-a})\big)$ and $\eta \in \mathbb{R}$; or 
\item[{\rm(ii)}] $\beta=2$ and $\eta \in (-\infty,1/2)$.
\end{itemize}
Then there exists a bounded function $\Lambda(t)$ such that 
\bea\label{thm1.5-cov}
\sup_{x\in \R }\Big|u(t,x) - \Phi_{\lambda_*,1-a}\Big(x-c_* t +\frac{1}{\lambda_*}\left(\frac{3}{2} - \sqrt{a}\eta\right) \log t+\Lambda(t)\Big)\Big| \to 0 \text{ as } t\to\infty.
\eea
In particular, for each $b \in (0,1-a)$, the level set $\xi_{b}(t)$ defined in \eqref{location-u} satisfies
\begin{equation}\label{level-set}
 \xi_{b}(t)=c_* t - \frac{1}{\lambda_*}\left(\frac{3}{2} - \sqrt{a}\eta\right) \log t+O(1),
\end{equation}
where
$\Phi_{\lambda_*,1-a}(x)$ satisfies \eqref{TW0}
with speed $c_* = \lambda_* + (1-a)/\lambda_*$ and $\lambda_*$ is defined in \eqref{lambda*}. 
\end{theorem}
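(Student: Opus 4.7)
The plan is to view \eqref{main-eq} as a KPP equation on the expanding domain $\Omega_{\zeta} = \{(t,x):~x \leq X(t)\}$, with $\zeta := X^{-1}$ and $R = 1-a$, and to apply Theorem~\ref{thm1}(i) with $\lambda = \lambda_* = \beta/2 - \sqrt{a}$ and exponent $q = \sqrt{a}\,\eta - \tfrac{3}{2}$. This choice is forced by the identity
\[
m_{\lambda_*, q}(t) = c_* t + \tfrac{q}{\lambda_*} \log\!\bigl((c_{\lambda_*} - 2\lambda_*)\,t\bigr) = c_* t - \tfrac{1}{\lambda_*}\bigl(\tfrac{3}{2} - \sqrt{a}\,\eta\bigr)\log t + O(1),
\]
matching the shift in \eqref{thm1.5-cov}. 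In case (i), $X'(t) \to \beta > c_{\lambda_*} = c_*$, so conditions \eqref{zeta-def}--\eqref{zeta-cond} on $\zeta$ are fulfilled; what remains is the sharp trace condition \eqref{e.0422.1} along $x = X(t)$.

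To establish the trace, I would analyze $u$ in the ``ahead'' half-space $\{x \geq X(t)\}$, where $u$ satisfies $u_t = u_{xx} + u - u^2$. Since $\xi_b(t)$ lags behind $X(t)$ at a positive linear rate $(\beta - c_*)t$, the solution is small there and can be linearized. Following \cite{hamel2013short}, the substitution $u = e^t v$ converts the leading analysis to the linear heat equation $v_t = v_{xx}$ on the moving half-space, driven by the boundary data at $x = X(t)$. Two-sided barriers are built as heat potentials emitted from the moving boundary; expanding
\[
\frac{X(t)^2}{4t} = \frac{\beta^2 t}{4} - \frac{\beta\,\eta}{2}\log(t+1) + o(1)
\]
and combining with the $t^{-3/2}$ prefactor characteristic of Dirichlet heat potentials on a half-space yields the sharp asymptotic
\[
u(t, X(t)) = (1+o(1))\, C\, t^{\beta\eta/2 - 3/2}\, e^{-(\beta^2/4 - 1)t}, \qquad t \to \infty.
\]
Using $\beta - 2\lambda_* = 2\sqrt{a}$ and $\lambda_*(\beta - c_{\lambda_*}) = \beta^2/4 - 1$, this asymptotic is exactly of the form required by \eqref{e.0422.1} with $q = \sqrt{a}\,\eta - \tfrac{3}{2}$. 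Theorem~\ref{thm1}(i) then yields the uniform convergence in \eqref{thm1.5-cov} on $\{x \leq X(t)\}$, and the extension to $\{x > X(t)\}$ follows from the exponential-tail matching of $\Phi_{\lambda_*,1-a}$ against the ahead-region barriers (both decay as $e^{-\lambda_*(x-c_* t)}$). The level-set formula \eqref{level-set} is then immediate from the strict monotonicity of $\Phi_{\lambda_*,1-a}$.

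For case (ii), $\beta = 2$ and $\eta < 1/2$, one has $\beta = c_{\lambda_*}$, so \eqref{zeta-cond} fails and Theorem~\ref{thm1} cannot be invoked directly. Here I would construct global super/subsolutions of the form $\Phi_{\lambda_*,1-a}\!\bigl(x - c_* t + \tfrac{1}{\lambda_*}(\tfrac{3}{2} - \sqrt{a}\,\eta)\log t + \text{const}\bigr)$ on $\{x \leq X(t)\}$, glued to the ahead-region heat-potential barriers from case (i), and verify the sub/super inequalities across the interface $x = X(t)$. The restriction $\eta < 1/2$ is precisely what ensures that the gap $X(t) - \xi_b(t) \sim \tfrac{3/2 - \eta}{1 - \sqrt{a}}\log t$ remains positive with a sufficient margin for the barrier construction to close.

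The main obstacle I anticipate is the sharp ahead-region barrier analysis: extracting exactly the polynomial factor $t^{\beta\eta/2 - 3/2}$ in $u(t, X(t))$ requires careful bookkeeping of Gaussian prefactors under the logarithmically-moving boundary $X(t)$ together with a self-consistent matching between the heat-potential trace from ahead and the traveling-wave trace from behind. This is the shifting-environment analog of the classical Bramson $-3/2$ correction, but the $\eta\log(t+1)$ shift in $X(t)$ must be tracked through every step so as not to lose the $\sqrt{a}\,\eta$ refinement.
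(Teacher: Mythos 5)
Your proposal takes essentially the same route as the paper: recast \eqref{main-eq} as a growing-domain KPP problem on $\{x\leq X(t)\}$, establish a sharp two-sided estimate of the trace $u(t,X(t))$ via the Dirichlet heat problem ahead of the boundary (the Hamel--Nolen--Roquejoffre--Ryzhik self-similar analysis), and then invoke the growing-domain Theorem~\ref{thm1} (via Lemmas~\ref{lem:limsup-case} and \ref{lem:liminf-case}) to transfer the trace estimate into Bramson-type convergence behind $X(t)$. That said, two points deserve attention.

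First, in case (i) you claim a sharp trace asymptotic $u(t,X(t)) = (1+o(1))\,C\,t^{\beta\eta/2-3/2}e^{-(\beta^2/4-1)t}$. This exact asymptotic is not needed and is not what the paper establishes: Lemma~\ref{l.a.0423.1} gives a lower bound and Lemma~\ref{lem:a.4.2}(a) an upper bound with \emph{different} constants, and it is Remarks~\ref{rk:3.1} and \ref{rk:3.2} that then allow the growing-domain machinery to tolerate a constant $k_0\neq 1$. Proving the sharp multiplicative constant would require a nontrivial matching argument that your sketch does not supply; sticking to two-sided bounds is both sufficient and more honest about what the barriers give.

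Second, your explanation for the restriction $\eta<1/2$ in case (ii) is not quite right. Positivity of the gap $X(t)-\xi_b(t)\sim \tfrac{3/2-\eta}{1-\sqrt a}\log t$ only requires $\eta<3/2$. The true obstruction is in the \emph{nonlinear} correction to the lower barrier: the subsolution of Lemma~\ref{l.a.0423.1} is $A(t)\tilde\phi(t,x)$ with an amplitude solving $\dot A = -C'(1+t)^{-3/2+\eta}A^2$, and $A(t)$ stays bounded away from zero precisely when $\int_0^\infty (1+s)^{-3/2+\eta}\,ds<\infty$, i.e.\ when $\eta<1/2$. Without this the lower bound $u(t,X(t))\gtrsim (1+t)^{-3/2+\eta}$ fails, and with it the boundary condition \eqref{BC0821} for the traveling-wave subsolution cannot be closed. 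This is a genuine gap in your sketch: the constraint $\eta<1/2$ is about controlling the quadratic absorption term ahead of the front, not about the sign of the lag.

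Finally, once behind-the-front convergence on $\{x\leq X(t)\}$ is obtained, the passage to $\sup_{x\in\R}$ in \eqref{thm1.5-cov} needs a Liouville/compactness argument (as in the proof of Theorem~\ref{thm1}(ii), using \cite[Theorem 3.5]{beresycki2007generalized}); the ``exponential-tail matching'' you invoke shows both $u$ and $\Phi_{\lambda_*,1-a}(x-m(t))$ are uniformly small ahead of $X(t)$, which is a reasonable shortcut but should be stated as such rather than as a matching of decay rates (the growth rates differ across $X(t)$).
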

{The two cases (i) and (ii) in Theorem~\ref{thm:pulling} will be proved separately. The case (ii) is more delicate since $\xi_b(t) = X(t) - O(\log t)$.}  
In view of \eqref{lambda*}, we see that \eqref{level-set} can be expressed as 
\beaa
    \xi_{b}(t) = c_* t - \left(\eta +\frac{3-\beta\eta}{2\lambda_*}\right) \log t+O(1)
    = c_* t - \frac{1}{\lambda_*}\left[\frac{3}{2} - \left(\frac{\beta}{2} -\lambda_* \right)\eta\right] \log t+O(1).
\eeaa




{
\begin{remark}
For the remaining case $(\beta,\eta) \in \{2\}\times [\tfrac12, \tfrac32)$, we conjecture that \eqref{level-set} continues to hold. 
The main difficulty is to obtain a sharp lower-bound estimate as in
Lemma~\ref{l.a.0423.1}.
\end{remark}
}

Next, we state the result in the critical pulling case {\bf (A2)}.

\begin{theorem}\label{thm:cpulling}
Assume $X(t) = \beta t - \eta \log (1+t)$  for some  $\beta =2(\sqrt a +\sqrt{1-a})$ and $\eta \in \mathbb R$.
Set $q=-\tfrac32 + \eta \sqrt a$. Then there exists a bounded function $\Lambda(t)$ such that 
\beaa
\sup_{x\in \R }\Big|u(t,x) - \Phi_{\min,1-a}\Big(x-\tilde{m}_q(t)+\Lambda(t)\Big)\Big| \to 0 \text{ as } t\to\infty,
\eeaa
where  $\tilde{m}_q(t)$ is given in \eqref{m^w-2}.
The function  $\Phi_{\min,1-a}(x)$ {satisfies}
\eqref{TW0} 
with critical 
speed $c_{\min}= 2\sqrt{1-a}$ and $\lambda_{\min} = \sqrt{1-a}$. 
In particular,
\begin{equation*}
 \xi_{b}(t)=\tilde{m}_q(t)+O(1).
\end{equation*}
\end{theorem}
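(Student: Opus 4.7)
\medskip

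\noindent\textbf{Overall strategy.} The plan is to reduce Theorem~\ref{thm:cpulling} to Theorem~\ref{thm1}(ii). Let $\zeta := X^{-1}$, so that the growing domain $\Omega_{\zeta}$ coincides with $\{(t,x):x\le X(t)\}$, on which \eqref{main-eq} reduces to the homogeneous KPP equation with $R=1-a$. Since $\zeta'(x)\to 1/\beta<1/c_{\min}$ with $c_{\min}=2\sqrt{1-a}$, assumption \eqref{zeta-cond} holds with $\lambda=\sqrt{R}=\sqrt{1-a}=\lambda_{\min}$. Thus, provided (a) the matching condition \eqref{e.0422.1} can be verified on $\{t=\zeta(x)\}$ with $q=-\tfrac{3}{2}+\eta\sqrt{a}$, and (b) $u$ is shown to be negligible compared to $\Phi_{\min,1-a}(x-\tilde{m}_q(t)-\Lambda(t))$ in the region $x>X(t)$, Theorem~\ref{thm1}(ii) delivers the desired convergence on $\{x\le X(t)\}$, and the extension to all of $\R$ follows from the pointwise smallness of $u$ ahead of the discontinuity.

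\medskip

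\noindent\textbf{Key step: sharp asymptotics of $u(t,X(t))$.} Following the gluing idea of Hamel-Nolen-Roquejoffre-Ryzhik \cite{hamel2013short}, analyze $u$ in the region $x>X(t)$, where the growth rate is $1$, $u$ remains exponentially small, and the linearization $u_t=u_{xx}+u$ is valid. In the moving frame $y=x-X(t)$, this becomes $v_t=v_{yy}+X'(t)v_y+v$ on $\{y>0\}$, with boundary data at $y=0$ that must match the tail of the critical traveling wave from behind. Construct sub- and supersolutions $w_\pm$ consisting of (i) a Dirichlet-heat-kernel-type profile on $\{y>0\}$ with drift $X'(t)$ and the conjugating factor $e^{-\lambda_{\min}y}$, glued to (ii) a shifted copy of $\Phi_{\min,1-a}$ on $\{y\le 0\}$. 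A direct Gaussian computation using $X(t)^2/(4t)=\beta^2 t/4-\beta\eta\log(t+1)/2+o(1)$ and $\beta^2/4=1+2\sqrt{a(1-a)}$ yields
\begin{equation*}
u(t,X(t)) = (1+o(1))\, C\, t^{-3/2+\beta\eta/2}\, e^{-2\sqrt{a(1-a)}\,t}
\end{equation*}
for some constant $C>0$. Substituting into \eqref{e.0422.1} with $\lambda=\sqrt{1-a}$ and using $X(t)-2\sqrt{1-a}\,t=2\sqrt{a}\,t-\eta\log(t+1)$, elementary algebra gives
\begin{equation*}
(X(t)-2\sqrt{1-a}\,t)^{-q}\, e^{\sqrt{1-a}\,X(t)-2(1-a)t}\,u(t,X(t)) = (1+o(1))\, C\,(2\sqrt{a})^{-q}\, t^{-q-3/2+\beta\eta/2-\sqrt{1-a}\,\eta},
\end{equation*}
which admits a nonzero finite limit precisely when $q=-\tfrac{3}{2}+\eta(\beta/2-\sqrt{1-a})=-\tfrac{3}{2}+\eta\sqrt{a}$, matching the value in the theorem. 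The remaining multiplicative constant $C$ is absorbed into $\Lambda(t)$ via translation invariance.

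\medskip

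\noindent\textbf{Main obstacle.} The delicate point is not the algebra but the sharpness of the sub- and supersolutions on the moving half-line $\{y>0\}$: a crude comparison would only pin down the exponential rate $e^{-2\sqrt{a(1-a)}t}$, which captures the speed of $X(t)$ but not the polynomial correction $t^{-3/2+\beta\eta/2}$, and it is this correction that determines $q$. Achieving the correct $t^{-3/2}$ prefactor requires solving the Dirichlet heat problem on $\{y>0\}$ with drift $X'(t)$ precisely enough to resolve the $t^{-3/2}$ decay of the first-order Gaussian derivative at $y=0$, in parallel with Bramson's delay computation as revisited in \cite{hamel2013short}. One must also verify that the quadratic term $-u^2$ and the non-constant drift $X'(t)-\beta=-\eta/(t+1)$ produce only lower-order perturbations, and that the sub and super profiles can be glued to shifted copies of $\Phi_{\min,1-a}$ at $y=0$ preserving the super/subsolution property. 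Once \eqref{e.0422.1} is established in this sharp form, Theorem~\ref{thm1}(ii) closes the argument, and the formula $\xi_b(t)=\tilde{m}_q(t)+O(1)$ follows from the strict monotonicity of $\Phi_{\min,1-a}$ near level $b$.
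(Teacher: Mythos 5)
Your overall strategy---reduce to the growing-domain problem of Theorem~\ref{thm1}(ii) and verify the matching condition at $x=X(t)$ via a Dirichlet heat-kernel analysis in the moving frame, following Hamel et al.---is indeed the route the paper takes: the paper's proof simply ``repeats the proof of Theorem~\ref{thm:pulling}'' and therefore relies on the lower bound of Lemma~\ref{l.a.0423.1}, the upper bound of Lemma~\ref{lem:a.4.2}(a), and then Lemmas~\ref{lem:limsup-case}, \ref{lem:liminf-case} with Remarks~\ref{rk:3.1}, \ref{rk:3.2}. However, your sketch of the key step contains two substantive errors.

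First, the conjugating factor on $\{y>0\}$ should be $e^{-\beta y/2}$, not $e^{-\lambda_{\min} y}$. Removing the drift from $v_t=v_{yy}+X'(t)v_y+v$ with $X'(t)\to\beta$ requires conjugation by $e^{-\beta y/2}=e^{-(\sqrt a+\sqrt{1-a})y}$, exactly as appears in \eqref{hatphi-formula} of Lemma~\ref{e.a.lem.1}; the rate $\lambda_{\min}=\sqrt{1-a}$ is the exponential decay of the traveling-wave tail coming from \emph{behind} and is strictly smaller than $\beta/2$, so it is not the correct rate to extract the profile ahead of the shift. Using $e^{-\lambda_{\min}y}$ would not cancel the drift and the Gaussian/self-similar analysis would not close.

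Second, the proposal to glue a shifted copy of $\Phi_{\min,1-a}$ on $\{y\le 0\}$ is both circular and incomplete. It is circular because the correct shift $\tilde m_q(t)+\Lambda(t)$ (including its logarithmic correction) is precisely what one is trying to determine; and even given a shift, $\Phi_{\min,1-a}\big(x-c_{\min}t-\theta(t)\big)$ with $\theta(t)$ containing a $\log t$ term is not an exact solution and requires a Fife--McLeod-type correction term before it can serve as a sub- or supersolution, which your sketch does not supply. The paper sidesteps both difficulties: for the lower bound it extends the Dirichlet profile by zero across $x=X(t)$ (Lemma~\ref{l.a.0423.1}), and for the upper bound it glues to the \emph{linear} heat solution $\psi$ with explicit initial data $\sim x^q e^{-\lambda x}$ (Lemma~\ref{lem:a.4.2}(a)), so neither piece requires prior knowledge of the front location. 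Finally, the sharp $(1+o(1))$ asymptotics of $u(t,X(t))$ that you aim for are more than is needed: because $\lambda=\lambda_{\min}$, Remarks~\ref{rk:3.1} and \ref{rk:3.2} show that any positive constant $k_0$ in place of $1$ in \eqref{limsup}, \eqref{liminf} yields the same conclusion, so two one-sided bounds with possibly different constants suffice, which is exactly what the paper establishes.
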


\medskip

The next result concerns the non-pulling case {\bf (A3)}.

\begin{theorem}\label{thm:no-pull}
Assume $X(t) = \beta t - \eta\log(1+t)$ for some  $\beta \in (2(\sqrt a +\sqrt{1-a}),\infty)$ and $\eta \in \mathbb R$.
Then there exists a bounded function $\Lambda(t)$ such that 
\begin{equation}\label{thm1.7-conv}
\sup_{x\in\R}  \Big|u(t,x )  -\Phi_{\min,1-a}\Big(x- c_{\min} t+ \frac{3}{2\lambda_{\min}}\log t + \Lambda(t)\Big) \Big|  \to 0\quad \text{ as }t\to\infty,
\end{equation} 
where $\Phi_{\min,1-a}(x)$ satisfies \eqref{TW0} 
with speed $c_{\min}:=2\sqrt{1-a}$ and $\lambda_{\min}:=\sqrt{1-a}$.
 In particular,
\begin{equation*}
 \xi_{b}(t)=c_{\min} t - \frac{3}{2\lambda_{\min}}\log t+O(1).
\end{equation*}
\end{theorem}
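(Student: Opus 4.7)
The plan is to apply Theorem~\ref{thm1}~(ii) to the region $\{x \leq X(t)\}$ behind the shifting discontinuity, combined with a straightforward comparison giving the matching lower bound.

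For the lower bound, since $r(t,x) \geq 1-a$ for all $(t,x)$, the parabolic comparison principle yields $u \geq u^-$ in $[0,\infty) \times \mathbb{R}$, where $u^-$ solves the homogeneous KPP equation $u^-_t = u^-_{xx} + u^-(1-a-u^-)$ with initial data $u_0$. Since $u_0$ is compactly supported to the right, Bramson's classical theorem (in the form \eqref{ea.0813.1} applied with $R=1-a$) produces a bounded function $\Lambda^-(t)$ and the uniform asymptotic $u^-(t,\cdot) - \Phi_{\min,1-a}(\cdot - c_{\min}t + \tfrac{3}{2\lambda_{\min}}\log t + \Lambda^-(t)) \to 0$, giving one side of \eqref{thm1.7-conv}.

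For the upper bound, behind $X(t)$ the solution $u$ satisfies the homogeneous KPP equation with $R=1-a$. Setting $\zeta(x) := \max\{0, X^{-1}(x)\}$, the region $\{x \leq X(t)\}$ coincides with $\Omega_\zeta$; condition \eqref{zeta-def} is immediate, and since $\zeta'(x) \to 1/\beta$ as $x \to +\infty$ and $\beta > 2\sqrt{1-a} = c_{\min}$, condition \eqref{zeta-cond} holds with $\lambda = \lambda_{\min} = \sqrt{1-a}$. The heart of the proof is to estimate the boundary trace $u(t,X(t))$. Ahead of $X(t)$ the environment is $R_+=1$, so $u_t \leq u_{xx} + u$. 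Comparing with the linear heat equation $v_t = v_{xx} + v$ initialized by $\chi_{(-\infty,x_0]} \geq u_0$, Gaussian heat-kernel estimates yield
$$u(t, X(t)) \leq C\,t^{-1/2}(t+1)^{\beta\eta/2}\,e^{-(\beta^2/4 - 1)t + O(1)} \quad \text{ as } t \to \infty.$$
The identity $\tfrac{\beta^2}{4} - 1 - \bigl(\lambda_{\min}\beta - (\lambda_{\min}^2+R)\bigr) = \bigl(\tfrac{\beta}{2} - \sqrt{1-a}\bigr)^2 - a,$ which is strictly positive precisely when $\beta > 2(\sqrt{a}+\sqrt{1-a})$, shows that $u(t,X(t))$ decays strictly faster than the critical rate $e^{-\lambda_{\min} X(t) + (\lambda_{\min}^2+R)t}$. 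Now fix any $q_* < -2$ and choose $\bar C$ large so that the modified boundary data
$$\bar g(x) := \bar C (x+1)^{q_*}\,e^{-\lambda_{\min} x + (\lambda_{\min}^2 + R)\zeta(x)} \quad \text{ for } x>0,$$
extended by a bounded function on $\{x\le 0\}$, dominates the actual data $u(\zeta(x),x)$ on $\partial_P \Omega_\zeta$. Let $\bar u$ solve the growing-domain KPP problem on $\Omega_\zeta$ with data $\bar g$. Since $\bar g$ satisfies \eqref{e.0422.1} with $\lambda = \lambda_{\min}$ and $q = q_*$, Theorem~\ref{thm1}~(ii) (using $q_* < -2$, hence $\tilde m_{q_*}(t) = c_{\min}t - \tfrac{3}{2\lambda_{\min}}\log t$) yields the uniform convergence $\bar u(t,\cdot) - \Phi_{\min,1-a}(\cdot - c_{\min}t + \tfrac{3}{2\lambda_{\min}}\log t - \bar\Lambda(t)) \to 0$ on $\{x \leq \zeta^{-1}(t)\}$, with $\bar\Lambda$ bounded. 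Comparison on $\Omega_\zeta$ gives $u \leq \bar u$ there, and on $\{x > X(t)\}$ the Gaussian bound already forces $u \to 0$ uniformly, matching the traveling-wave tail. Combining with the lower bound and a standard level-set construction produces a bounded $\Lambda(t)$ for which \eqref{thm1.7-conv} holds.

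The main obstacle is the sharp boundary-data estimate ahead of the front: obtaining the exact exponent $\beta^2/4 - 1$ with the polynomial prefactor $t^{\beta\eta/2}$ requires a careful Gaussian computation along the moving front $X(t) = \beta t - \eta\log(t+1)$, and deriving a \emph{matching lower bound} of the same form is needed to ensure that $\bar u$ is genuinely asymptotic to $u$ rather than merely dominant. A second subtlety is verifying that the chosen $\bar g$ really satisfies \eqref{e.0422.1} with the quoted $q_*$ after all polynomial corrections arising from the $\eta$-logarithmic shift in $X(t)$ are collected. Apart from these two points, the argument reduces to a direct composition of Theorem~\ref{thm1}~(ii) with classical parabolic comparison.
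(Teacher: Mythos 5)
Your proposal is correct and follows essentially the same route as the paper: compare from below with the homogeneous KPP equation on $\mathbb R$ with $R=1-a$ to get Bramson's $-\tfrac{3}{2\lambda_{\min}}\log t$ correction, bound $u(t,X(t))$ from above by something decaying strictly faster than the critical exponential rate, and then invoke the growing-domain theory (Theorem~\ref{thm1}(ii), via Lemma~\ref{lem:limsup-case}(ii) and Remark~\ref{rk:3.1}) with $\lambda=\lambda_{\min}$ and any $q_*<-2$, exploiting that $\tilde m_{q}(t)$ is independent of $q$ once $q<-2$. The one cosmetic difference is how the boundary trace is controlled: you derive the rate $e^{-(\beta^2/4-1)t}$ from a Gaussian heat-kernel expansion along the moving front, whereas the paper (Lemma~\ref{lem:a.4.2}(b)) gets the same exponential rate more cheaply by comparing $u$ on all of $\mathbb R$ with the explicit global supersolution $\bar u_3(t,x)=M_3\,e^{-\tfrac{\beta}{2}x+(\tfrac{\beta^2}{4}+1)t}$, then using the identity $(\beta^2/4-1)-(\lambda_{\min}\beta-\lambda_{\min}^2-R)=(\tfrac{\beta}{2}-\sqrt{1-a})^2-a>0$ exactly as you do. Your closing worry about needing a \emph{matching lower bound} for $u(t,X(t))$ is unfounded and worth dispelling: in the non-pulling regime the boundary trace enters only through the upper bound, because for any $q_*<-2$ the resulting front location is identical to the one supplied by the interior lower bound $u\geq u^-$, so sharpness of the boundary estimate plays no role. (This is precisely what distinguishes the $q<-2$ case from the pulling cases of Theorems~\ref{thm:pulling} and~\ref{thm:cpulling}, where a two-sided boundary estimate is genuinely required.)
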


Since the level set asymptotics here is the same as the homogeneous KPP equation \eqref{e.0401.2} with $R=1-a$, it follows that the distant favorable region $\{(t,x):~x \geq X(t)\}$ can only change the dynamics of the level set $\xi_b(t)$ by at most a uniformly bounded quantity in case {\bf(A3)}.



\subsection{Formula of the level set $\xi_b(t)$ in case $\eta=0$.}\label{rk:new-thing}
We make an observation with $\eta = 0$, i.e. $X(t)  =\beta t$ for some {$\beta\in\mathbb{R}$}. Then the problem \eqref{main-eq} reduces to
\begin{equation}\label{main-eq2}
        u_t = u_{xx} + u(1-a\chi_{(-\infty,\beta t]}-u) \quad \text{ for }t>0,~x\in\mathbb{R},\\
\end{equation}
with initial data satisfying \eqref{e.bcc}.
Applying 
{Theorem~\ref{thm:standard},}
Theorem~\ref{thm:pulling}, Theorem~\ref{thm:cpulling}, Theorem~\ref{thm:no-pull}, we have
$$
\xi_{b}(t) = \begin{cases}
{2 t -\frac{3}{2}\log t+ O(1)} & {\text{ if } \beta<2,} \\ 
{2 t -\frac{3}{2(1-\sqrt{a})}\log t+ O(1)} & {\text{ if } \beta=2,} \\ 
    c_* t - \frac{3}{2\lambda_*}\log t + O(1)  &\text{ if }2 < \beta <2(\sqrt a + \sqrt{1-a}),\\
    c_{\min}t - {\frac{5}{4 \lambda_{\min}} \log t}+ O(1)&\text{ if }\beta = 2(\sqrt a + \sqrt{1-a}),\\
    c_{\min}t - \frac{3}{2 \lambda_{\min}} \log t+ O(1)  &\text{ if }\beta > 2(\sqrt a + \sqrt{1-a}).
\end{cases}
$$
where 
$$
\lambda_* = \frac{\beta}{2}-\sqrt{a}, \quad c_* =  \lambda_* + \frac{1-a}{\lambda_*},\quad \lambda_{\min}=\sqrt{1-a},\quad c_{\min} = \lambda_{\min} + \frac{1-a}{\lambda_{\min}}.
$$

It is interesting to observe the appearance of the factor $\tfrac54$, and that, as $\beta \searrow 2$, 
$$
\lambda_* = \frac{\beta}{2}-\sqrt a \to 1-\sqrt{a}, \quad\qquad c_* \to 2 \quad \text{ but }\quad \frac3{2\lambda_*} \not\to \frac32.
$$
For the readers' convenience, we summarize the results and outstanding problems for the spreading speed and a logarithmic delay in Tables~\ref{table1} and \ref{table2}. The open problem for the case $a<0$ and $\beta \geq 2\sqrt{1-a}$  (in which $r(t,x)$ is decreasing in $x$; see last row of Table \ref{table2}) is connected with \cite[Theorem 1.5]{berestycki2018forced} which says $u(t,x + \beta t) \to 0$ locally uniformly. Hence, this implies that $$
\theta^*:=\lim_{t\to\infty}\frac{\xi_{b}(t)-c_* t}{\log t}<0, \qquad \text{ provided that }\quad \beta = 2\sqrt{1-a}.$$ 

\subsection{Discussion}\label{sec:discussion}

In our main results we obtain sharp estimates of the location of the spreading front to a reaction-diffusion equation 
in the whole real line while being subjected to a shifting environment, where the shifting function $X(t)=X(t;\beta,\eta):=\beta t-\eta \log(t+1)$ incorporates a logarithmic delay (Subsections~\ref{subsect:1.2} and \ref{rk:new-thing}).
Our main results uncover a novel logarithmic delay phenomenon ({Theorems~\ref{thm:standard}}, \ref{thm:pulling}, \ref{thm:cpulling}, and \ref{thm:no-pull}) when the initial data {$u_0$ has support contained in $(-\infty,x_0]$.}

As we will see, a key ingredient underlying these results is the observation that the problem can be linked to a KPP equation on a growing domain {(Section~\ref{sec:thm1})}. By delicate analysis of this associated problem, motivated by the seminal work of Bramson \cite{bramson1983convergence}, we establish {Proposition~\ref{thm1}}, as a natural extension of Bramson's results. This observation deepens our understanding of the logarithmic delay phenomenon and enables us to apply this framework to a broader class of problems involving shifting environments.

{We should mention that the assumption of a piecewise constant environment is crucial in our analysis.
It is natural to ask whether analogous results hold for $r(t,x)=g(x-X(t))$, where $g$ is a smooth increasing function satisfying
\[
    g(-\infty)=1-a,\quad g(+\infty)=1,
\]
and converging exponentially to these limits. In such a case, we expect that this perturbation introduces only an $O(1)$ error, and leaves our logarithmic correction unchanged. 

However, 
a smooth environmental function no longer allows the equation to be decomposed exactly into homogeneous Fisher--KPP equations on the two sides of the moving interface. Hence, our present argument cannot be applied directly.
New ideas would be needed to control the contribution of the transition region. 
}



\begin{table}[h] 
    \centering
        \begin{tabular}{|c|c|c|c|}
        \hline
        $a$ & $\beta$  & $\dps c_*:=\lim_{t\to\infty}\frac{\xi_{b}(t)}{t}$ & $\dps \theta^*:=\lim_{t\to\infty}\frac{\xi_{b}(t)-c_* t}{\log t}$\\
        \hline
        $a=0$    &  N/A   & $2$ \cite{fisher1937wave,kolmogorov1937study}   & $\dps-\tfrac{3}{2
        \lambda_*}$ \cite{bramson1983convergence,hamel2013short}   \\
        \hline
         $0<a<1$    & $\beta<2$   & $2$ \cite{holzer2014accelerated,Lam2022asymptotic}   & $\dps -\tfrac{3}{2
        \lambda_*}$ \cite{bramson1983convergence,hamel2013short}   \\
         \hline
         $0<a<1$    & $\beta=2$   & $2$ \cite{holzer2014accelerated,Lam2022asymptotic}   &  {$\dps\tfrac{3}{2(1-\sqrt{a})}$ (Theorem~\ref{thm:pulling})} \\
        \hline
        $0<a<1$   & $2< \beta<2(\sqrt{a}+\sqrt{1-a})$   & $c_{nlp}$ \cite{holzer2014accelerated,Lam2022asymptotic}   & $\dps -\tfrac{3}{2
        \lambda_{nlp}}$ (Theorem~\ref{thm:pulling})   \\
        \hline
         $0<a<1$    & {$\beta=2(\sqrt{a}+\sqrt{1-a})$}   & $2\sqrt{1-a}$ \cite{holzer2014accelerated,Lam2022asymptotic}    & {$\displaystyle -\tfrac{5}{4\lambda_{\min}}$ (Theorem~\ref{thm:cpulling})}   \\
         \hline
          $0<a<1$    & {$\beta>2(\sqrt{a}+\sqrt{1-a})$}   & $2\sqrt{1-a}$ \cite{holzer2014accelerated,Lam2022asymptotic}    & {$\dps -\tfrac{3}{2\sqrt{1-a}}$ (Theorem~\ref{thm:no-pull})} \\ 
        \hline \hline 
         $a>1$    & $\beta<2$  & $2$ \cite{li2014persistence} & $\dps -\tfrac{3}{2
        \lambda_*}$ \cite{bramson1983convergence,hamel2013short} \\
         \hline
        $a>1$    & $\beta>2$  &  (extinction) \cite{li2014persistence}  & (extinction) \\
        \hline
    \end{tabular}
    \caption{The spreading dynamics to \eqref{main-eq2} when $X(t) = \beta t$, where the shifting function is increasing. Here $\lambda_* = 1$ and $\lambda_{nlp}$ correspond to the exponential decay rate of the traveling wave solutions with speeds $c_*=2$ and $c_{nlp}$, respectively.}\label{table1}
\end{table}

\begin{table}[h] 
    \centering
        \begin{tabular}{|c|c|c|c|}
        \hline
        $a$ & $\beta$  & $\dps c_*:=\lim_{t\to\infty}\frac{\xi_{b}(t)}{t}$ & $\dps \theta^*:=\lim_{t\to\infty}\frac{\xi_{b}(t)-c_* t}{\log t}$\\
        \hline
        $a<0$    &  $\beta<2$   & $2$ \cite{holzer2014accelerated,Lam2022asymptotic}  & $\dps -\tfrac{3}{2
        \lambda_*}$ \cite{bramson1983convergence,hamel2013short}  \\
        \hline
         $a<0$    & $2\leq \beta<2\sqrt{1-a}$   & $\beta$  \cite{berestycki2018forced,holzer2014accelerated,Lam2022asymptotic} &  0 \cite[Theorem 1.5(iii)]{berestycki2018forced}   \\
        \hline
        $a<0$    & $\beta\geq 2\sqrt{1-a}$   & $2\sqrt{1-a}$ \cite{holzer2014accelerated,Lam2022asymptotic}  &  {\bf Open problem}
        \\
        \hline
    \end{tabular}
    \caption{The spreading dynamics to \eqref{main-eq2} when $X(t) = \beta t$, where the shifting function is decreasing.}\label{table2}
\end{table}


\subsection{Organization of the paper}

The organization of this paper is as follows.
{In Section~\ref{sec:bramson} and Section~\ref{sec:2b}, we provide sharp estimates based on the work of Bramson \cite{bramson1983convergence} and Lau \cite{lau1985nonlinear}, which serve as key ingredients in the {proofs of our main results. In Section~\ref{sec:thm1}, we study a problem with a growing domain, extending Bramson's results for the Fisher--KPP equation on the real line to the setting of growing domains.
}
In Section~\ref{sec:4}, we establish a series of estimates relevant to studying the problem \eqref{main-eq} with a {shifting} environment and then we prove {Theorem~\ref{thm:standard},}
Theorem~\ref{thm:pulling}, Theorem~\ref{thm:cpulling}, and Theorem~\ref{thm:no-pull}. 
Finally, the Appendix contains the proof of  Lemma~\ref{e.a.lem.1} stated in
Section~\ref{sec:4}.
}





\section{{Estimates for the linear heat equation on the real line}}\label{sec:bramson}

In this section, we provide sharp estimates for the KPP equation with initial data behaving like $x^qe^{-\lambda x}$, {inspired by the seminal works of Bramson~\cite{bramson1983convergence} and Lau~\cite{lau1985nonlinear}.}


{To establish estimates for $w(t,x)$, we begin with analysis of the linear problem.}
Let $R\geq 0$ and let 
$\psi(x,t;w_0)$ be the solution to the linear problem
\begin{equation}\label{e.heat}
    \begin{cases}
        \psi_t = \psi_{xx} + R\psi &\text{ for }t>0,~ x \in \mathbb{R},\\
        \psi(0,x)=w_0(x) &\text{ in }\mathbb{R}.
    \end{cases}
\end{equation}
Then
\begin{equation}
    \psi(t,x) = e^{Rt} \int_{\mathbb{R}} w_0(y) \frac{1}{\sqrt{4\pi t}} e^{-\tfrac{(x-y)^2}{4t}}\,dy.
\end{equation}

Let $w$ denote the solution to the nonlinear equation \eqref{e.0401.2} whose initial data coincides with $\psi$. 
In Section \ref{sec:2b}, we will prove that for each $\delta>0$,  
$$
\sup_{x \geq m(t) + 2\delta t} \left|\frac{w(t,x)}{\psi(t,x)}  -1\right| \to 0\qquad \text{ as }t\to\infty.
$$
where 
\bea\label{m^w-def}
{m(t)}:=\begin{cases}
m_{\lambda,q}(t),& \quad \text{ if } \lambda\in(0,\sqrt{R}),\\
\tilde{m}_{q}(t),& \quad \text{ if } \lambda=\sqrt{R},
\end{cases}
\eea
with $m_{\lambda,q}(t)$ and $\tilde{m}_{q}(t)$ being defined in \eqref{m^w} and in \eqref{m^w-2}, respectively.

{Hereafter, we write $\text{sgn}\,q\in \{-1,0,1\}$ for the sign of a real number $q$, 
with the convention $\text{sgn}\,0=0$. For $a,b \in \mathbb{R}$, we denote 
\[
a \vee b := \max\{a,b\}, \qquad a \wedge b := \min\{a,b\}.
\]
}

\subsection{Refined estimates via heat kernel representation}

The following estimate is a variation of \cite[Lemma 4.1]{bramson1983convergence} under a different  set of assumptions.

\begin{lemma}\label{lem:A1}
Let $\bar\lambda >0$ be given. 
Assume that 
\begin{equation}\label{A1}
w_0\in L^{\infty} \quad \text{ and }\quad 
{w_0(y) \leq e^{-\bar\lambda y} \quad \text{ for all }y\geq y_0} 
\end{equation}
for some $y_0 \in \mathbb R$.
Then for each $\delta \geq 0$ and some constant $C$,
\begin{equation}\label{A2}
\log \left[\int_{J^c} w_0(y) \frac{1}{\sqrt{4\pi t}} e^{-\frac{(x-y)^2}{4t}} \,dy\right] \leq - \bar\lambda x + \bar{\lambda}^2 t 
- \frac{\delta^2}{8} t + C 
\end{equation} 
for all $x \in \mathbb{R}$ and  $t > 0$ and $J^c$ denotes the complement of
\begin{equation}
J = (x-2\bar\lambda t -\delta t, x-2\bar\lambda t + \delta t).
\end{equation}
\end{lemma}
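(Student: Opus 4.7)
\smallskip
\noindent\textbf{Proof proposal.} The plan is to reduce the integral to a pure Gaussian tail by completion of squares, then invoke a standard complementary error function bound. The estimate follows along the lines of \cite[Lemma~4.1]{bramson1983convergence}, but I want to avoid splitting the integration at $y_0$, which keeps the argument short.

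First I would globalise the exponential envelope on $w_0$. Setting $C_1 := \max\{1,\|w_0\|_{L^\infty}e^{\bar\lambda y_0}\}$, I claim $w_0(y)\leq C_1 e^{-\bar\lambda y}$ for every $y\in\mathbb R$: for $y\geq y_0$ it is immediate from \eqref{A1}, while for $y<y_0$ one has $w_0(y)\leq\|w_0\|_\infty = \|w_0\|_\infty e^{\bar\lambda y_0}\cdot e^{-\bar\lambda y_0}\leq C_1 e^{-\bar\lambda y}$ since $y_0>y$. Substituting this pointwise bound and completing the square, namely
\[
-\bar\lambda y-\frac{(x-y)^2}{4t}=-\bar\lambda x+\bar\lambda^2 t-\frac{(y-(x-2\bar\lambda t))^2}{4t},
\]
followed by the change of variables $z:=y-(x-2\bar\lambda t)$ (under which $J^c$ becomes $\{|z|\geq \delta t\}$), yields
\[
\int_{J^c}w_0(y)\tfrac{1}{\sqrt{4\pi t}}e^{-(x-y)^2/(4t)}\,dy\ \leq\ C_1 e^{-\bar\lambda x+\bar\lambda^2 t}\int_{|z|\geq \delta t}\tfrac{1}{\sqrt{4\pi t}}e^{-z^2/(4t)}\,dz.
\]

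The remaining tail integral equals $\operatorname{erfc}(\delta\sqrt{t}/2)$ (after the scaling $v=z/(2\sqrt t)$). I would then invoke the bound $\operatorname{erfc}(a)\leq e^{-a^2}$ for $a\geq 0$, which follows from the elementary inequality $t^2\geq a^2+2a(t-a)$ on $\{t\geq a\}$. With $a=\delta\sqrt{t}/2$ this gives $\operatorname{erfc}(\delta\sqrt t/2)\leq e^{-\delta^2 t/4}\leq e^{-\delta^2 t/8}$ for all $t>0$, so the integral is bounded by $C_1 e^{-\bar\lambda x+\bar\lambda^2 t-\delta^2 t/8}$; taking logarithms yields the claim with $C=\log C_1$. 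I note that the case $\delta=0$ is trivially contained in this bound (the tail factor is then $1$), so no separate treatment is required.

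There is no real obstacle here; the only care needed is in the uniformity in $(t,x)$. Completion of squares cleanly absorbs the $x$-dependence into the factor $e^{-\bar\lambda x+\bar\lambda^2 t}$, and the tail estimate $\operatorname{erfc}(a)\leq e^{-a^2}$ — saturated at $a=0$ and supplying the required decay for large $a$ — makes the bound valid uniformly for every $t>0$. The constant $C$ depends only on $\bar\lambda$, $y_0$, and $\|w_0\|_{L^\infty}$, consistent with the statement. A minor remark: the argument in fact yields the sharper decay $-\delta^2 t/4$, and the weaker $-\delta^2 t/8$ in the statement presumably provides a buffer used downstream.
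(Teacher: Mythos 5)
Your argument is essentially correct and takes a genuinely self-contained route, which differs from the paper's. The paper proves the lemma by citing Bramson's Lemma~4.1 directly for $t\geq 1$ (which supplies the bound with an extra $-\log(t\wedge 1)$ term that vanishes for $t\geq 1$) and then handles the range $0< t\leq 1$ with the rough estimate $\int_{J^c}\leq\int_{\mathbb R}=Ke^{-\bar\lambda x+\bar\lambda^2 t}$, absorbing $\tfrac{\delta^2}{8}t\leq\tfrac{\delta^2}{8}$ into the constant. Your completion-of-squares and Gaussian-tail computation replaces the Bramson citation, and because the tail estimate you use is valid uniformly in $t>0$, you also avoid the paper's case split entirely. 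That is a clean simplification.

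One justification is flawed, though the claimed inequality happens to be true. You assert that $\operatorname{erfc}(a)\leq e^{-a^2}$ ``follows from the elementary inequality $t^2\geq a^2+2a(t-a)$ on $\{t\geq a\}$.'' Inserting that inequality and integrating gives
\[
\operatorname{erfc}(a)=\frac{2}{\sqrt\pi}\int_a^\infty e^{-t^2}\,dt\;\leq\;\frac{2}{\sqrt\pi}\,e^{-a^2}\int_a^\infty e^{-2a(t-a)}\,dt\;=\;\frac{e^{-a^2}}{a\sqrt\pi},
\]
which is \emph{not} $\leq e^{-a^2}$ for $a<1/\sqrt\pi$ (and is useless at $a=0$). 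So the stated elementary inequality does not deliver the bound you quote. The inequality $\operatorname{erfc}(a)\leq e^{-a^2}$ is nevertheless true: the function $g(a)=e^{-a^2}-\operatorname{erfc}(a)$ vanishes at $a=0$ and as $a\to\infty$, and $g'(a)=2e^{-a^2}(\tfrac1{\sqrt\pi}-a)$ has a single sign change, so $g\geq 0$. You should either give that one-line monotonicity argument, or simply note that a cruder bound already suffices for the lemma: for $|z|\geq\delta t$, split $e^{-z^2/(4t)}\leq e^{-\delta^2 t/8}\,e^{-z^2/(8t)}$ and integrate the remaining Gaussian over $\mathbb R$ to get $\sqrt2\,e^{-\delta^2 t/8}$, which is exactly the $\tfrac{\delta^2}{8}t$ decay in the statement with $C=\log(\sqrt2\,C_1)$. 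Either fix is immediate, and the rest of your proof (the global envelope $w_0\leq C_1 e^{-\bar\lambda y}$, the completion of the square, the change of variables, the $\delta=0$ remark, and the accounting of what $C$ depends on) is correct.
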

\begin{proof} 
For $t \geq 1$, it follows from \cite[Lemma 4.1, p. 54]{bramson1983convergence} that
\begin{equation}\label{e.0423.1}
\log\left[ \int_{J^c} w_0(y) \frac{1}{\sqrt{4\pi t}} e^{-\frac{(x-y)^2}{4t}} \,dy \right] \leq -\bar\lambda x + \bar\lambda^2 t - \log(t\wedge 1) - \frac{\delta^2}{8} t     +C.
\end{equation}
This proves the assertion \eqref{A2} for $t\geq 1$.

For $0 \leq t \leq 1$, we use the following rough estimate:
$$
\int_{J^c} w_0(y) \frac{1}{\sqrt{4\pi t}} e^{-\frac{(x-y)^2}{4t}} \,dy \leq K\int_{-\infty}^\infty e^{-\bar\lambda y} \frac{1}{\sqrt{4\pi t}} e^{-\frac{(x-y)^2}{4t}} \,dy = K e^{-\bar\lambda x + \bar\lambda^2 t},
$$
where $K>1$ is chosen such that $w_0(y)\leq K  e^{-\bar\lambda y}$ for all $y\in\R$.
Hence, 
$$
\log\left[ \int_{J^c} w_0(y) \frac{1}{\sqrt{4\pi t}} e^{-\frac{(x-y)^2}{4t}} \,dy \right]\leq -\bar\lambda x+ \bar\lambda^2 t +\log K\quad \text{ for }t \in [0,1],~x \in \mathbb{R}.
$$
The lemma follows upon combining with \eqref{e.0423.1}.
\end{proof}

\begin{remark}
Note that {for any given $\lambda>0$  and $R>0$, }
\bea\label{identity1}
{e^{Rt}} \int_{\mathbb{R}} e^{-\lambda y} \frac{1}{\sqrt{4\pi t}} e^{-\frac{(x-y)^2}{4t}} \,dy=e^{-\lambda(x-c_\lambda t)} \quad \text{ where }~c_\lambda = \lambda + \frac{R}{\lambda}.
\eea

It follows from \eqref{identity1} and Lemma \ref{lem:A1} (taking $\bar\lambda = \lambda$) that for any given $\delta>0$, there exists $C_0>0$ such that
\begin{align}
e^{Rt}\int_{J} e^{-\lambda y} \frac{1}{\sqrt{4\pi t}} e^{-\frac{(x-y)^2}{4t}} \,dy &=  e^{Rt}\Big(\int_{\mathbb{R}} -\int_{J^c}\Big)e^{-\lambda y} \frac{1}{\sqrt{4\pi t}} e^{-\frac{(x-y)^2}{4t}} \,dy \notag \\
&\geq e^{-\lambda(x-c_\lambda t)}(1-C_0e^{-\frac{\delta^2}{8}t})
\label{e.0323.3}
\end{align}
for $x \in \mathbb{R}$ and $t>0$.
\end{remark}

Next, we present the main technical lemma, which is a special case of \cite[Lemma 4.1]{bramson1983convergence}, and provides upper and lower estimates for solution $\psi$ to the linear equation \eqref{e.heat}  with a specific initial data $w_0(x)$. 
\begin{lemma}\label{lem:Bramson-0813}
{Let $R>0$, $\lambda>0$, $q\in\R$ and $x_0\geq 1$ be given}, and let $\psi$ denote the solution to \eqref{e.heat} with initial condition $w_0$, which  satisfies {\bf(W0)}.
Then for each $\delta\in (0,2\lambda)$ and each sufficiently small 
$\epsilon \in (0,\delta/6)$, there exists $C_1>1$ such that the following statements hold.
\begin{itemize}
   \item[{\rm(i)}] In $\{(t,x):~ x \geq (2\lambda + \delta)t+x_0,~t>0\}$, we have
 \begin{equation}\label{e.0813.2}
  \psi(t,x) \geq 
 (x-2\lambda t - (\text{sgn}\,q)\delta t )^q \Big(1-C_1e^{-\frac{\delta^2}{8}t}\Big)e^{-\lambda(x-c_\lambda t)}
\end{equation} 
    \item[{\rm(ii)}] In $\{(t,x):~ x \geq (2\lambda + \delta)t+x_0,~t>0\}$, we have
    \begin{equation}\label{e.0813.1a}
 \psi(t,x) \leq (x-2\lambda t + (\text{sgn}\,q)\delta t) ^{q} e^{-\lambda(x-c_\lambda t)} +  C_1e^{-(\lambda - \epsilon)(x-c_\lambda t) - \frac{\delta^2}{19}t}  
\end{equation}
   \item[{\rm(iii)}] In $\{(t,x):~ x \geq c_\lambda t ,~t>0\}$, we have
      \begin{equation}\label{e.0813.1bb}
 \psi(t,x) \leq C_1\,(x\vee 1)^{|q|}\, e^{-(\lambda-\ep)(x-c_\lambda t)}.
\end{equation}
    \item[{\rm(iv)}] 
In $\{(t,x):~ x \in \mathbb R,~t>0\}$, we have
   \begin{equation}\label{e.0813.1b}
 \psi(t,x) \wedge 1 \leq C_1\, (x\vee 1)^{|q|}\, e^{-(\lambda-\ep)(x-c_\lambda t)}.
\end{equation}
 
\end{itemize}
Here we recall that $c_\lambda = \lambda + \tfrac{R}{\lambda}$.
\end{lemma}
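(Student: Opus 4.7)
Let $p_t(z) := (4\pi t)^{-1/2}\,e^{-z^2/(4t)}$ and set $\xi := x - 2\lambda t$. The engine of the argument is the completing-the-square identity
\[
e^{Rt}\,e^{-\lambda y}\,p_t(x-y) = e^{-\lambda(x - c_\lambda t)}\,p_t(\xi - y),
\]
which re-centers the tilted heat kernel at $y = \xi$. Splitting the heat representation of $\psi$ at $y = x_0$ and using $w_0(y) = y^q e^{-\lambda y}$ on $\{y \geq x_0\}$ gives
\[
\psi(t,x) = e^{-\lambda(x - c_\lambda t)}\int_{y \geq x_0} y^q\, p_t(\xi - y)\,dy \,+\, e^{Rt}\int_{y < x_0} w_0(y)\,p_t(x-y)\,dy.
\]
In the regime of (i)--(ii), the condition $x \geq (2\lambda + \delta)t + x_0$ ensures that the interval $J := (\xi - \delta t, \xi + \delta t)$ is contained in $\{y \geq x_0\}$, so the Gaussian mass concentrates precisely where the exact form of $w_0$ is available.

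For (i), I restrict the integral to $J$, bound $y^q \geq (\xi - (\text{sgn}\,q)\delta t)^q$ by monotonicity of the power function, and invoke the Gaussian concentration $\int_J p_t(\xi - y)\,dy \geq 1 - Ce^{-\delta^2 t/8}$. For (ii), I bound the $J$-integral from above by $(\xi + (\text{sgn}\,q)\delta t)^q$ and estimate the tail $\int_{J^c} w_0(y)\,e^{Rt}\,p_t(x-y)\,dy$ via Lemma~\ref{lem:A1} with tilted exponent $\bar\lambda = \lambda - \epsilon$ (absorbing $y^q \leq C_\epsilon\,e^{\epsilon y}$ on $\{y \geq 0\}$ and using $\|w_0\|_\infty$ on $\{y < 0\}$) and reduced radius $\bar\delta = \delta - 2\epsilon$, chosen so that the Lemma's tail set contains $J^c$. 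The algebraic identity
\[
R + (\lambda - \epsilon)^2 - (\lambda - \epsilon)c_\lambda = \epsilon\bigl(R/\lambda - \lambda + \epsilon\bigr)
\]
produces a linear-in-$t$ correction of size $O(\epsilon t)$ that must be absorbed by the quadratic bonus $(\delta - 2\epsilon)^2 t/8 \geq \delta^2 t/18$; the choice $\epsilon \in (0, \delta/6)$ sufficiently small yields the required $-\delta^2 t/19$ factor.

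For (iii), valid in the region $x \geq c_\lambda t$ (which extends beyond $\{x \geq (2\lambda + \delta)t + x_0\}$ when $\lambda = \sqrt R$), I use the uniform bound $y^q \leq (1+y)^{|q|}$ for $y \geq x_0 \geq 1$ (valid for both signs of $q$) together with the subadditivity $(1 + \xi + z)^{|q|} \leq C\bigl((1 + \xi)^{|q|} + z_+^{|q|}\bigr)$ to obtain
\[
\int_{y \geq x_0} y^q\, p_t(\xi - y)\,dy \leq C\bigl[(1 + \xi)^{|q|} + t^{|q|/2}\bigr] \leq C\,(x \vee 1)^{|q|},
\]
where the last bound uses $1 + \xi \leq 1 + x$ and $t \leq x/c_\lambda$. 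The $\{y < x_0\}$-integral is dominated by $\|w_0\|_\infty\,e^{Rt - (x - x_0)^2/(4t)}$, and the pointwise inequality $x^2/(4t) - Rt \geq (\lambda - \epsilon)(x - c_\lambda t)$ on $\{x \geq c_\lambda t\}$---which reduces to checking that the larger root of a quadratic in $x$ is at most $c_\lambda t$ when $\epsilon$ is small---supplies the needed $e^{-(\lambda - \epsilon)(x - c_\lambda t)}$ factor. Finally, (iv) follows from (iii) via the trivial $\psi \wedge 1 \leq 1$: on $\{x < c_\lambda t\}$ the factor $e^{-(\lambda - \epsilon)(x - c_\lambda t)} > 1$ makes the inequality automatic, while on $\{x \geq c_\lambda t\}$ the claim reduces to (iii).

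The main technical obstacle is the $\epsilon$--$\delta$ bookkeeping in (ii): the shift of $\lambda$ to $\lambda - \epsilon$ both displaces the center of Lemma~\ref{lem:A1}'s exclusion interval by $2\epsilon t$ (forcing $\bar\delta \leq \delta - 2\epsilon$) and introduces a linear-in-$t$ error from $c_{\lambda - \epsilon} \neq c_\lambda$; both must simultaneously fit strictly inside the quadratic Gaussian bonus while leaving a clean $-\delta^2 t/19$ in the final exponent.
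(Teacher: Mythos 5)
Your overall strategy — the tilted heat-kernel identity $e^{Rt}e^{-\lambda y}p_t(x-y)=e^{-\lambda(x-c_\lambda t)}p_t(\xi-y)$ with $\xi=x-2\lambda t$, the split of the Gaussian mass between the window $J=(\xi-\delta t,\xi+\delta t)$ and its complement, and \cref{lem:A1} for the tail — is precisely the paper's. Items (i), (ii) and (iv) are essentially identical to the paper's proof: in (i) you invoke the Gaussian concentration $\int_J p_t\geq 1-Ce^{-\delta^2 t/8}$ directly rather than through \eqref{e.0323.3}, which is equivalent; in (ii) you use the tilt $\bar\lambda=\lambda-\epsilon$ with a reduced radius $\bar\delta=\delta-2\epsilon$ (the paper takes $\tfrac23\delta$), and your check that $\bar J\subset J$ under $\epsilon<\delta/6$ and that the linear correction $\epsilon(R/\lambda-\lambda+\epsilon)t$ is absorbed by the quadratic bonus $(\delta-2\epsilon)^2t/8>\delta^2t/18$ is exactly the bookkeeping the paper performs; (iv) is the same trivial reduction.

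Item (iii) is where you diverge and where there is a genuine gap. The paper recycles the $J/J^c$ machinery from (ii): it observes that the bounds $e^{Rt}\int_J w_0\,p_t\leq C_0(x\vee 1)^{|q|}e^{-\lambda(x-c_\lambda t)}$ (from $w_0(y)\leq C_0(y\vee1)^{|q|}e^{-\lambda y}$ and $\delta<2\lambda$) and \eqref{e.0813.6} hold for \emph{all} $(t,x)$, and then simply restricts to $x\geq c_\lambda t$ where $e^{-\lambda(\cdot)}\leq e^{-(\lambda-\epsilon)(\cdot)}$. Your argument instead splits the $y$-integral at $y=x_0$ and bounds the two halves separately. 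The first half requires the subadditivity inequality $(1+\xi+z)^{|q|}\leq C\bigl((1+\xi)^{|q|}+z_+^{|q|}\bigr)$, which presupposes $1+\xi\geq 0$; but in the region $x\geq c_\lambda t$ with $\lambda>\sqrt R$ (allowed, since the lemma is stated for arbitrary $\lambda>0$), one has $c_\lambda-2\lambda=R/\lambda-\lambda<0$, so $\xi$ can be arbitrarily negative and $(1+\xi)^{|q|}$ is ill-defined. The second half relies on the inequality $x^2/(4t)-Rt\geq(\lambda-\epsilon)(x-c_\lambda t)$, but the exponent actually in play is $(x-x_0)^2/(4t)-Rt$, and the shift by $x_0$ produces terms like $-x_0x/(2t)$ that are \emph{not} dominated by what you have proved without a separate argument (they are ultimately bounded below by a constant over $\{x\geq c_\lambda t\}$ and can be absorbed into $C_1$, but this needs to be checked — it is not immediate from the stated quadratic root analysis, which addressed a different polynomial). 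Both issues are patchable, but as written the proof of (iii) has holes; the paper's route avoids them entirely by carrying the $J/J^c$ decomposition over from (ii) and never introducing the split at $y=x_0$.
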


\begin{remark}\label{rmk.815.1}
Fix $s \geq 0$ and let $\psi$ be a solution of \eqref{e.heat} in the domain $[s,\infty)\times \mathbb R$. If $\psi(s,\cdot)$ satisfies {\bf(W0)} for some fixed $s\geq 0$, then by \eqref{e.0813.1bb}, we have
$$
\psi(t,x) \leq C_1(x\vee 1)^{|q|} e^{-(\lambda-\ep)(x-c_\lambda (t-s))} \quad \text{for }t>s,~x\in\mathbb R.
$$
\end{remark}
\begin{remark}\label{rmk:0423.1b}
We state an observation for later purposes: 
For $\beta > 2\lambda$ and $\eta\in \mathbb{R}$,  by setting $x = \beta t - \eta \log (t+1)$ in
 \eqref{e.0813.2} and \eqref{e.0813.1a}, we have
\begin{equation}
\lim_{t\to\infty}    \frac{\psi(t, \beta t - \eta \log t)}{\psi(t, 1-M + \beta t - \eta \log t)} = e^{-\lambda (M-1)} \quad \text{ for each }M \in \mathbb{R}.
\end{equation}
\end{remark}

\begin{proof}[Proof of Lemma~\ref{lem:Bramson-0813}]
Fix $q \in \mathbb{R}$, 
{$\lambda>0$, 
$x_0\geq1$}, and let $$J = (x-2\lambda t -\delta t, x-2\lambda t + \delta t).$$
First, we derive \eqref{e.0813.2}.
Using the standard heat kernel representation, and \eqref{w-ic-q}, 
\begin{align}
\psi(t,x)  &\geq e^{Rt}\int_{J}w_0(y) \frac{1}{\sqrt{4\pi t}} e^{-\frac{(x-y)^2}{4t}}\,dy\notag \\
&= e^{Rt}\int_{J}y^q e^{-\lambda y} \frac{1}{\sqrt{4\pi t}} e^{-\frac{(x-y)^2}{4t}}\,dy\notag \\
& \geq    [(x-2\lambda t - (\text{sgn}\,q)\delta t )]^q  e^{Rt}\int_{J} e^{-\lambda y} \frac{1}{\sqrt{4\pi t}} e^{-\frac{(x-y)^2}{4t}} \,dy \notag\\ 
&\geq  [(x-2\lambda t - (\text{sgn}\,q)\delta t )]^q (1-C_0e^{-\frac{\delta^2}{8}t})e^{-\lambda(x-c_\lambda t)}
\label{e.0813.5a}
\end{align}
for all $t>0$ and  $x \geq (2\lambda +\delta)t +x_0$, 
where we used \eqref{e.0323.3} for the last inequality.
This proves \eqref{e.0813.2}.

Next, we prove the inequality \eqref{e.0813.1a}. 
Set $\bar\lambda = \lambda - \ep$ for some $\ep \in (0,\tfrac{\delta}6)$, we note that $J = (x-2\lambda t- \delta t,x -2\lambda t + \delta t)$ and $\bar J = (x-2\bar\lambda t - \tfrac23 \delta t, x- 2\bar\lambda t + \tfrac23 \delta t)$ satisfy
$$
 J^c\subset \bar{J}^c.
$$
Hence, we use Lemma \ref{lem:A1} to estimate the integral on $J^c$ as
\begin{equation}
\label{e.0813.6a}
\log \left[e^{Rt} \int_{J^c}w_0(y) \frac{1}{\sqrt{4\pi t}} e^{-\frac{(x-y)^2}{4t}} \right] \leq - (\lambda-\ep)(x- c_{\lambda-\ep} t) 
- \frac{(\tfrac{2}{3}\delta)^2}{8}t + C 
\end{equation}
 for all $x \in \mathbb{R}$ and  $t > 0$.     
By taking $\ep\in (0,\tfrac{\delta}6)$ small enough, we may assume 
\beaa
(\lambda-\ep)(c_{\lambda-\ep} - c_\lambda) < \frac{\delta^2}{18\cdot 19},
\eeaa 
so that \eqref{e.0813.6a} implies
\begin{equation}\label{e.0813.6}
\log \left[e^{Rt} \int_{J^c}w_0(y) \frac{1}{\sqrt{4\pi t}} e^{-\frac{(x-y)^2}{4t}} \right] \leq - (\lambda-\ep)(x- c_{\lambda} t) - \frac{\delta^2}{19}t + C 
\end{equation}
for all $x \in \mathbb{R}$ and  $t >0$.

Similar as in \eqref{e.0813.5a},  we have, for all $t>0$ and  $x \geq (2\lambda +\delta)t +x_0$,
\begin{align}
{e^{Rt}} \int_{J}w_0(y) \frac{1}{\sqrt{4\pi t}} e^{-\frac{(x-y)^2}{4t}}\,dy &\leq 
{e^{Rt}} 
{(x-2\lambda t + (\text{sgn}\,q)\delta t )^q}
\int_{J} e^{-\lambda y} \frac{1}{\sqrt{4\pi t}} e^{-\frac{(x-y)^2}{4t}} \,dy \notag \\
 &\leq  {e^{Rt}} 
 {(x-2\lambda t + (\text{sgn}\,q)\delta t )^q}\int_{\mathbb{R}} e^{-\lambda y} \frac{1}{\sqrt{4\pi t}} e^{-\frac{(x-y)^2}{4t}} \,dy \notag  \\
 &=  
{[(x-2\lambda t + (\text{sgn}\,q)\delta t )\vee 1]^q}e^{-\lambda(x-c_\lambda t)} 
 \label{e.0813.4}
\end{align}
where we used \eqref{identity1}. We can then conclude \eqref{e.0813.1a} by combining \eqref{e.0813.6} and \eqref{e.0813.4}. 

To prove  
\eqref{e.0813.1bb} and \eqref{e.0813.1b}, we choose {$C_0>1$} such that 
$$
w_0(y) \leq C_0(y\vee 1)^{|q|} e^{-\lambda y} \quad \text{ holds for all }y \in \mathbb{R}.
$$
which is a consequence of \eqref{w-ic-q}. One can then observe that the argument for \eqref{e.0813.4} and $\delta \in (0,2\lambda)$ yield
\begin{align}  
{e^{Rt}} \int_{J}w_0(y) \frac{1}{\sqrt{4\pi t}} e^{-\frac{(x-y)^2}{4t}}\,dy &\leq C_0{[(x-2\lambda t + \delta t )\vee 1]^{|q|}}e^{-\lambda(x-c_\lambda t)} \notag\\
&\leq C_0{(x\vee 1)^{|q|}}e^{-{\lambda(x-c_\lambda t)}}
\label{e.0813.4b}
\end{align}
for all $t >0$ and $x \in\mathbb R$.
{Combining with \eqref{e.0813.6}},
we have
\begin{equation}
 \psi(t,x) \leq   C_0{(x\vee 1)^{|q|}} \left(e^{-(\lambda-\ep)(x-c_\lambda t)} + e^{-\lambda(x-c_\lambda t)}  \right) \quad \text{ for }t>0,~x\in\mathbb{R},
\end{equation}
which implies \eqref{e.0813.1bb}. 

To derive \eqref{e.0813.1b}, we observe that for $x\geq c_{\lambda}t$,
\eqref{e.0813.1bb} immediately implies \eqref{e.0813.1b}.
For $x< c_{\lambda}t$, we have 
\beaa
 C_1\, (x\vee 1)^{|q|}\, e^{-(\lambda-\ep)(x-c_\lambda t)}\geq C_1 (x\vee 1)^{|q|} >1 \geq \psi \wedge 1.
\eeaa
Therefore, \eqref{e.0813.1b} holds.
The proof is now complete.
\end{proof}

\section{The linear versus nonlinear equations}\label{sec:2b}
Next, we state the refined asymptotic properties connecting the solutions of the linear and nonlinear equations with the same, front-like initial data. To ease the burden of notation, observe that we may assume without loss of generality that $R=1$, in view of the change of variables $(t',x')=(Rt,\sqrt{R} x)$. Henceforth, we will state the result for $R>0$, but only prove the case $R=1$.

\begin{proposition}\label{lem:w-estimate}
Let $f$ and $w_0$ satisfy {\bf (F)} and {\bf(W0)} for some given constants $R>0$, $q \in \mathbb R$ and $\lambda \in (0,\sqrt R]$. 
Let $w$ and $\psi$ denote the solution to \eqref{e.0401.2} and \eqref{e.heat}, respectively, with the same initial condition $w_0$. 
Then for each $\delta>0$, there exist $\delta_1>0$, $T>0$ and $C>0$ such that 
\bea\label{psi-w-est}
1-C e^{-\delta_1 t} \leq \frac{w(t,z+m (t))}{\psi(t,z + m(t))} \leq 1 \qquad \text{ in }\{(t,x):~t\geq T,~z \geq 2\delta t\},
\eea
where $m(t)$ is specified in \eqref{m^w-def}.
{Moreover,} for any $0<{k}<1$ and $X(t)$ such that $(c_\lambda + {k}) t \leq X(t) \leq (c_\lambda + 1/{k})t$,  
\begin{equation}\label{e.a.0423.4}
  \lim_{t\to\infty} \left[(x-2\lambda t)^{-q} e^{\lambda x - (\lambda^2 +R)t} w(t,x)\right]_{x=X(t)} =1,
\end{equation}
and 
\begin{equation}\label{w-upper-lower}
{\bar C}^{-1} t^{q} e^{-\lambda(x-c_\lambda t)} \leq w(t,x) \leq {\bar C} t^{q}e^{-\lambda(x-c_\lambda t)}
\end{equation}
in $\{(t,x):~ (c_\lambda + {k})t \leq x \leq (c_\lambda + 1/{k})t,\, t \geq 1\}$, where $\bar C>1$ is a constant.
\end{proposition}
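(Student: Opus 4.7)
The plan is to follow the Duhamel-plus-comparison strategy of Bramson~\cite{bramson1983convergence}, refined as in Hamel--Nolen--Roquejoffre--Ryzhik~\cite{hamel2013short}: on the region far ahead of the invasion front the nonlinear solution is well approximated by its linearization, and the correction decays exponentially fast in time. The upper bound $w\le \psi$ is immediate from \textbf{(F)} via the parabolic comparison principle, since $f(w)\le Rw$.

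For the lower bound I set $v:=\psi-w\ge 0$, which satisfies $v_t-v_{xx}-Rv=Rw-f(w)\ge 0$ with $v(0,\cdot)=0$. Using \eqref{e.a.423.11} I bound $Rw-f(w)\le \rho^{-1} w^{1+\rho}$, and the pointwise estimate $w\le \psi\wedge \|w\|_\infty$ combined with Lemma~\ref{lem:Bramson-0813}(iv) yields $w(s,y)\le C(y\vee 1)^{|q|}e^{-(\lambda-\ep)(y-c_\lambda s)_+}$. In particular the source $w^{1+\rho}$ decays in $y$ at rate $(1+\rho)(\lambda-\ep)$, which is strictly larger than the decay rate $\lambda$ of $\psi$ itself. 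This discrepancy is precisely what must be converted into an exponentially small time factor.

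Substituting into the Duhamel representation of $v$, I evaluate the Gaussian convolution using \eqref{identity1} with exponent $(1+\rho)\lambda$ in place of $\lambda$, after splitting the $y$-integral into the region $\{y< c_\lambda s+O(\log s)\}$, whose contribution is super-exponentially small by a Gaussian tail estimate, and the ahead region, where I use the exponential bound on $w$. Choosing $\rho>0$ small enough that $(1+\rho)\lambda$ lies on the same side of $\sqrt R$ as $\lambda$ (permitted by the parenthetical remark following \eqref{e.a.423.11}), the inner integrand is of the form $Ce^{-(1+\rho)\lambda[x-c_\lambda s-c_{(1+\rho)\lambda}(t-s)]}$ up to polynomial and $\ep$-losses. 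Comparing with $\psi(t,x)\sim (x-2\lambda t)^q e^{-\lambda(x-c_\lambda t)}$ from Lemma~\ref{lem:Bramson-0813}(i)(ii), the quotient $v/\psi$ gains a factor $e^{-\rho\lambda(x-c_\lambda t)}$. Since $m(t)=c_\lambda t+O(\log t)$ by \eqref{m^w}--\eqref{m^w-2}, the hypothesis $z\ge 2\delta t$ gives $x-c_\lambda t\ge \delta t$ for $t\ge T$ large, yielding \eqref{psi-w-est} with $\delta_1$ proportional to $\rho\lambda\delta$.

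The consequences \eqref{e.a.0423.4} and \eqref{w-upper-lower} follow routinely from \eqref{psi-w-est}. If $(c_\lambda+k)t\le X(t)\le (c_\lambda+1/k)t$, then $X(t)-m(t)\ge 2\delta t$ for some $\delta=\delta(k)>0$ and all $t$ large, so $w(t,X(t))/\psi(t,X(t))\to 1$. Combining with Lemma~\ref{lem:Bramson-0813}(i)(ii), which gives $\psi(t,X(t))\sim (X(t)-2\lambda t)^q e^{-\lambda(X(t)-c_\lambda t)}$ as $t\to\infty$, and using $\lambda c_\lambda=\lambda^2+R$ to rewrite the exponent, yields \eqref{e.a.0423.4}; the same ratio bound together with the comparability $(X(t)-2\lambda t)^q\asymp t^q$ throughout the sector yields \eqref{w-upper-lower}. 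The main obstacle will be the space-time convolution estimate: the polynomial prefactors $(y\vee 1)^{|q|}$, and the three different forms of $m(t)=\tilde{m}_q(t)$ in \eqref{m^w-2} at the critical exponent $\lambda=\sqrt R$, force several case distinctions, and verifying $x-c_\lambda t\ge \delta t$ on $\{z\ge 2\delta t\}$ against the appropriate log-correction in each case must be done carefully so that the polynomial factors are absorbed into the exponentially small gain $e^{-\delta_1 t}$.
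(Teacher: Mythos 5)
Your proposal is essentially the paper's own proof: define the remainder $E=\psi-w\ge 0$ (your $v$), write it via Duhamel against the source $Rw-f(w)\le \rho^{-1}w^{1+\rho}$, use Lemma~\ref{lem:Bramson-0813}(iv) to convert $w\le B\wedge\psi$ into a $(y\vee1)^{|q|}e^{-(\lambda-\ep)y}$ bound on the source, then estimate the resulting heat convolution by treating the integrand as a heat-kernel evolution of a ``$(\cdot\vee1)^{\bar q}e^{-\lambda_1\cdot}$'' profile (Remark~\ref{rmk.815.1}), and finally divide by the lower bound for $\psi$ from Lemma~\ref{lem:Bramson-0813}(i); the gain is exactly the $e^{-\rho\lambda(x-c_\lambda t)}$ factor you identify, which becomes $e^{-\delta_1 t}$ on the cone $z\ge 2\delta t$.

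One step in your writeup, however, fails as literally stated. You choose $\rho$ so that ``$(1+\rho)\lambda$ lies on the same side of $\sqrt R$ as $\lambda$''; this is impossible when $\lambda=\sqrt R$, which is the critical case covered by the proposition. The paper isolates exactly this obstruction: writing $\lambda_1$ for the shifted exponent, one has $c_{\lambda_1}<c_\lambda$ when $\lambda<\sqrt R$ (your scenario), but $c_{\lambda_1}>c_\lambda$ when $\lambda=\sqrt R$, with $|c_{\lambda_1}-c_\lambda|=\tfrac{(\lambda_1-\lambda)^2}{\lambda_1}=O(\sigma^2)$. The resolution is not a different case structure for the three forms of $\tilde m_q$ (which all give $m(t)=c_{\min}t+O(\log t)$ and so are handled uniformly), but rather a smallness constraint on the auxiliary parameter, namely $\tfrac{9\sigma^2}{\lambda}+\ep c_\lambda<\sigma\lambda\delta$, so that the $O(\sigma^2)t$ loss from $c_{\lambda_1}>c_\lambda$ is dominated by the $O(\sigma\delta)t$ gain from $z\ge 2\delta t$. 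You should replace the ``same side'' choice by this quantitative inequality; with that change your argument is complete and matches the paper's proof of Lemma~\ref{lem:E} and Proposition~\ref{lem:w-estimate}.
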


\begin{remark}\label{rmk:0423.1}
We state an observation for later purposes: For each $\beta > c_\lambda$,  $\eta \in \mathbb{R}$, by setting $X(t) = \beta t - \eta \log (t+1)$ in \eqref{e.a.0423.4}, we have 
\begin{equation}
\lim_{t\to\infty}    \frac{w(t, \beta t - \eta \log t)}{w(t, 1-M + \beta t - \eta \log t)} = e^{-\lambda (M-1)} \quad \text{ for each }M \in \mathbb{R}.
\end{equation}
\end{remark}

To prove Proposition~\ref{lem:w-estimate}, 
define the remainder function
$$
E(t,x):= \psi(t,x) - w(t,x).
$$
Since it satisfies $E_t - E_{xx} +RE = Rw - f(w)$, it follows from the Duhamel’s formula that
\begin{equation}\label{E}
E(t,x)=    \int_0^t  e^{R(t-s)}\int_\mathbb{R} \frac{1}{\sqrt{4\pi (t-s)}} e^{ -\frac{(x-y)^2}{4(t-s)}}
{[Rw(s,y)-f(w(s,y))]}
\,dy ds.
\end{equation}
It is clear that $E\geq 0$ due to $R=f'(0)$ and $f(w) \leq Rw$, thanks to \eqref{f-cond}. It follows that
\begin{equation}\label{upper-bd0413}
w(t,z + m(t)) \leq \psi(t,z + m(t)) \quad \text{ for all }t>0,~z \in \mathbb{R}.
\end{equation}
To show the lower bound for $w/\psi$, we need to estimate $E(t,x)$ from above. 

\begin{lemma}\label{lem:E}
Given sufficiently small $\delta>0$ and $\sigma>0$, 
there exist $C>0$ and $T>1$ such that
\begin{equation}\label{e.a.0422.1}
0\leq E(t,z+m(t)) \leq C 
e^{- (1+\sigma)\lambda z}
\end{equation}
for $z \geq 2\delta t$ and  $t \geq T$, where $m(t)$ is specified in \eqref{m^w-def}.
\end{lemma}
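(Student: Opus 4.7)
The non-negativity $E \ge 0$ is immediate from \eqref{E}: the hypothesis $f'(s) \le R = f'(0)$ from \textbf{(F)} yields $Rw - f(w) \ge 0$ on $[0, B]$, and $0 \le w \le \max\{B, \|w_0\|_\infty\}$ by the comparison principle. For the upper bound, the plan is to control $Rw - f(w)$ pointwise in terms of $\psi$ and then feed the resulting estimate into the Duhamel formula \eqref{E}. Integrating \eqref{e.a.423.11} from $0$ to $w$ gives $Rw - f(w) \le C_0\, w^{1+\rho}$, and combining $w \le \psi$ (from $E \ge 0$) with the uniform bound on $w$ yields $w^{1+\rho} \le C (\psi \wedge 1)^{1+\rho}$. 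Then Lemma~\ref{lem:Bramson-0813}(iv) produces
\[
(\psi(s,y) \wedge 1)^{1+\rho} \le C\, (y \vee 1)^{|q|(1+\rho)} e^{-\mu(y - c_\lambda s)}, \qquad \mu := (1+\rho)(\lambda - \epsilon),
\]
with $\epsilon > 0$ to be chosen small.

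The polynomial factor $(y\vee 1)^{|q|(1+\rho)}$ is absorbed at the cost of an arbitrarily small reduction $\mu \to \mu - \tau$, using $y^{|q|(1+\rho)} e^{-\mu y} \le C_\tau e^{-(\mu - \tau) y}$ for $y \ge 1$ and that the factor is at most $1$ for $y \le 1$. Plugging into \eqref{E} and carrying out the standard Gaussian integral in $y$ reduces the task to controlling
\[
I(t,x) := e^{-\mu x} \int_0^t e^{(R + \mu^2)(t-s) + \mu c_\lambda s}\,ds,
\]
whose asymptotic behaviour is governed by the sign of $R + \mu^2 - \mu c_\lambda = (\mu - \lambda)(\mu - R/\lambda)$.

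In the subcritical regime $\lambda < \sqrt R$, one can choose $\rho$ and $\epsilon$ so small that $\mu \in (\lambda, R/\lambda)$; then $R + \mu^2 < \mu c_\lambda$, the integrand is decreasing in $t-s$, and $I(t,x) \le C\, e^{-\mu(x - c_\lambda t)}$. Evaluation at $x = z + m_{\lambda,q}(t)$ introduces only a polynomial factor $t^{-\mu q/\lambda}$ coming from the $\tfrac{q}{\lambda}\log t$ shift, which is dominated by an arbitrarily small piece of $e^{-\mu z}$ on the set $z \ge 2\delta t$. Choosing $\rho$ so that $\mu > (1+\sigma)\lambda$ yields the desired bound.

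The main obstacle is the critical regime $\lambda = \sqrt R$, where $R/\lambda = \lambda$; writing $\mu = \lambda + \alpha$, one finds $R + \mu^2 - \mu c_\lambda = \alpha^2 > 0$, so the integrand in $I$ is increasing in $t-s$ and the resulting bound becomes
\[
E(t, z + \tilde m_q(t)) \le C\, e^{-\mu z + \alpha^2 t + O(\log t)}.
\]
On the set $z \ge 2\delta t$, one rewrites $\alpha^2 t \le (\alpha^2/(2\delta)) z$ to convert the spurious loss $e^{\alpha^2 t}$ into an effective rate $\mu - \alpha^2/(2\delta) - \tau$, which can be made at least $(1+\sigma)\lambda$ by choosing $\rho$ (hence $\alpha$) comparable to $\sigma$, provided $\sigma$ is small enough relative to $\delta$. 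The residual logarithmic factors from the shifts in \eqref{m^w-2} and from the power factor in Lemma~\ref{lem:Bramson-0813}(iv) are sub-exponential in $z$ and absorbed in the same way. The key subtle point is thus this parameter matching in the critical case: the Gaussian loss $e^{\alpha^2 t}$ must be traded against the available exponential decay using only the restriction $z \ge 2\delta t$.
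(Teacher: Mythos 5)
Your proposal follows essentially the same route as the paper: bound the nonlinear remainder $Rw - f(w)$ by a power of $\psi \wedge 1$, feed this into the Duhamel formula, invoke Lemma~\ref{lem:Bramson-0813}(iv), reduce to a single $s$-integral, and discriminate according to the sign of $(\mu-\lambda)(\mu - R/\lambda)$, with the key observation in the critical case $\lambda = \sqrt R$ that the spurious gain $e^{\alpha^2 t}$ must be traded against $z \ge 2\delta t$ (equivalently $t \le z/(2\delta)$), which forces $\sigma$ to be chosen small relative to $\delta$. This matches the paper's hypothesis \eqref{e.815.sig}, $\tfrac{9}{\lambda}\sigma^2 < \sigma\lambda\delta$, and the estimate \eqref{e.815.11}.

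Two small issues. First, the claimed pointwise inequality $(y\vee1)^{|q|(1+\rho)} e^{-\mu y} \le C_\tau e^{-(\mu-\tau)y}$ is false for $y$ very negative: for $y < 0$ the left side is $e^{\mu|y|}$ and the right is $C_\tau e^{(\mu-\tau)|y|}$, so the inequality fails as $y \to -\infty$. The paper sidesteps this entirely by retaining the truncation $\min\{1, (y\vee1)^{\bar q}e^{-\lambda_1 y}\}$ in the definition of $J(s,y)$ and noting that $J(s,\cdot)$ then satisfies the front-like condition \textbf{(W0)}, so the Duhamel integrand is itself a heat-equation solution to which Lemma~\ref{lem:Bramson-0813}(iii) (via Remark~\ref{rmk.815.1}) applies directly; this produces a polynomial in $x$ rather than in $y$. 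Your route still works if you carry the polynomial through the Gaussian integral first (the Gaussian tails kill any trouble for $y<0$) and only afterwards absorb the resulting polynomial in $x$ on the set $z \ge 2\delta t$; but as written the absorption step does not hold. Second, where you say ``choosing $\rho$ comparable to $\sigma$'': the constant $\rho$ is fixed in \textbf{(F)}, so you are implicitly invoking the paper's convention that $\rho$ may be decreased. The paper makes this concrete by replacing $(\psi\wedge1)^{1+\rho}$ with $(\psi\wedge1)^{1+3\sigma}$ (legitimate since the base is at most one and $3\sigma < \rho$), which sets $\lambda_1 = (1+3\sigma)(\lambda-\ep)$ and makes the parameter bookkeeping transparent. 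With those two points repaired your argument is sound.
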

\begin{proof}

Consider the equation \eqref{e.0401.2} and let $f$ be a function satisfying \eqref{f-cond} and \eqref{e.a.423.11} for some $R=f'(0)>0$, $B>0$ and $0<\rho \leq 1$. Again, we may assume without loss that $R=1$. 
Henceforth, we fix an initial data $w_0$ satisfying \eqref{w-ic-q} for some $q \in \mathbb{R}$ and $\lambda \in (0,1]$ and $x_0\geq 1$, and let $w$ (resp. $\psi$) be the solution of \eqref{e.0401.2} (resp. of \eqref{e.heat} with $R=1$).  By the maximum principle, it is easy to see that
\bea\label{e.0814.1}
w(t,x) \leq B \wedge \psi(t,x) \leq (1+B) (1\wedge \psi(t,x) )\quad \text{ for }t>0,~x\in \mathbb{R}.
\eea
We fix $\delta \in (0,2\lambda)$ and a small $\sigma\in(0,(\rho + \max\{1-\sigma,0\})/3)$ such that
\begin{equation}\label{e.815.sig}
    \frac9{\lambda}\sigma^2 <  \sigma \lambda \delta.
\end{equation}
Having chosen $\sigma$, we then fix $\ep>0$ small enough so that
\begin{equation}\label{e.815.ep}
    \frac9{\lambda}\sigma^2 + \ep c_\lambda <  \sigma \lambda \delta \quad \text{ and }\quad \lambda_1:= (1+3\sigma)(\lambda - \ep)  \geq (1+2\sigma)\lambda + \ep.
\end{equation}
Note also that ${0}<\lambda <\lambda_1 <1$ in case $\lambda <1$. 
 Next, note that there is $K>0$ such that
$$
0\leq 
    {w(s,y)-f(w(s,y))}
    \leq {K}|w(s,y)|^{1+{\rho}}
    \leq  K(1+B)^{1+\rho} |1 \wedge \psi(s,y)|^{1+3\sigma}
$$
for all $t>0$ and $x\in \R$, where the second inequality is due to \eqref{e.0814.1} and $0<3\sigma<\rho$.  Hence, 
thanks to \eqref{e.0813.1b} in Lemma~\ref{lem:Bramson-0813} and the definition of $\lambda_1$, we have
\begin{align*}
    0&\leq 
    {w(s,y)-f(w(s,y))}
\leq  {J(s,y)}  \quad \text{ for }s>0,~y \in \mathbb R,
\end{align*}
where we set $\bar q = |q|(1+{3}\sigma)$, 
\bea\label{e.0814.3}
J(s,y)=
 K' G(s) \min\{1,   [y\vee 1]^{\bar q}  e^{-\lambda_1 y}\}  \quad \text{ for }s>0,~y \in \mathbb R.
\eea
for some $K'>0$ and $G(s) = e^{{\lambda_1}c_{\lambda}s}$.
Note that $G(s) >1$ for all $s \geq 0$.

Recalling the definition of $E(t,x)$ in \eqref{E} and using \eqref{e.0814.3}, we have 
\begin{equation}\label{e.0814.2}
    0\leq E(t,x)\leq E_1(t,x):= \int_0^t  e^{(t-s)}\int_\mathbb{R} \frac{1}{\sqrt{4\pi (t-s)}} e^{ -\frac{(x-y)^2}{4(t-s)}}   {J}(s,y)\,dy, \quad t>0,~x\in \mathbb R. 
\end{equation} 
Observe that, for each fixed $s>0$, the integrand 
$$
\psi(t,x) = e^{(t-s)}\int_\mathbb{R} \frac{1}{\sqrt{4\pi (t-s)}} e^{ -\frac{(x-y)^2}{4(t-s)}}  {J}(s,y)\,dy
$$is a solution to the heat equation in the interval $(s,\infty) \times \mathbb{R}$ with initial data 
$$
\psi(s,y)\equiv {J}(s,y) {=} K' G(s) \min\big\{1,   (y\vee 1)^{\bar q}  e^{- \lambda_1 y}\big\}.
$$
Let 
$$
\Omega_1:=\{(t',s',x'):~x'\geq (c_\lambda + \delta)(t'-s') ,~0<s'<t'\}.
$$
By Remark \ref{rmk.815.1}, there exists $C_1>0$ such that for $(t,s,x) \in \Omega_1$,
\begin{align}
&e^{(t-s)} \int_\mathbb{R} \frac{1}{\sqrt{4\pi (t-s)}} e^{ -\frac{(x-y)^2}{4(t-s)}} J_1(s,y)dy\notag \\
&= K'G(s) \int_\mathbb{R} \frac{1}{\sqrt{4\pi (t-s)}} e^{ -\frac{(x-y)^2}{4(t-s)}}  \min\left\{ 1,~ (y\vee 1)^{\bar q}  e^{-\lambda_1 y} \right\}dy\notag \\
&\leq K'G(s)\cdot C_1 (x\vee 1)^{\bar q}
\exp\Big(
-(\lambda_1 -\ep)[x-c_{\lambda_1}(t-s) ]\Big) \notag \\
&\leq K'\cdot C_1 (x \vee 1)^{\bar q}
\exp\Big(-(\lambda_1 -\ep)[x-c_{\lambda_1}(t-s) - c_{\lambda}s]+ \ep c_\lambda s\Big). 
\label{e.815.2}
 \end{align}
Next, define 
$$
\Omega_2:= \{(t',x'):~x'\geq (c_\lambda + \delta)t',~t'>0\}.$$ Then for each $(t,x) \in \Omega_2$, we have $(t,s,x) \in \Omega_1$ for all $s \in [0,t]$ so that we may
integrate \eqref{e.815.2} over $s\in[0,t]$, to get $C_2>0$ such that
\begin{equation}\label{E1estimate0812}
0\leq E_1(t,x) 
\leq C_2  \,(x\vee 1)^{\bar q} \exp\Big(-(\lambda_1-\ep)(x-(c_\lambda \vee c_{\lambda_1}) t) + \ep c_\lambda t\Big)  \quad \text{ for }(t,x)\in \Omega_2.
\end{equation}
Note that $c_{\lambda_1}>c_{\lambda}$ if $\lambda =1$ and $c_{\lambda_1}<c_{\lambda}$ if $\lambda <1$. Recall the formula of $m(t)$ given in \eqref{m^w-def}, for different cases of $\lambda \in (0,1]$ and $q\in \mathbb R$. 
Since for each $t \gg 1$ and $z>2\delta t$, $(t,x)=(t,z+m(t)) \in \Omega_2$, we may substitute $x= z+m(t)$ in \eqref{E1estimate0812} to obtain
\begin{align}
&E_1(t,z+m(t))\notag \\ &\leq C_3(z + t)^{\bar q} \exp\Big( - (\lambda_1 - \ep)z + (\lambda_1 - \ep)|m(t) - (c_\lambda \vee c_{\lambda_1})t| + \ep c_\lambda t \Big) \notag\\
&\leq C_3 (z+t)^{\bar q}\exp\Big(-(1+2\sigma)\lambda z  + \lambda_1 \big|m(t) - (c_\lambda \vee c_{\lambda_1})t \big| + \ep c_\lambda t\Big) \notag \\
&\leq C_3 e^{-(1+\sigma)\lambda z} (z+t)^{\bar q} \exp\Big( - \frac{\sigma \lambda}{2}  z- \sigma \lambda \delta t  + \lambda_1 \big|{m(t)} - (c_\lambda \vee c_{\lambda_1})t\big| + \ep c_\lambda t\Big) 
\label{e.815.4}
\end{align}
for $t>1$ and $z \geq 2\delta t$, where we used \eqref{e.815.ep} in the first inequality. 

It remains to prove \eqref{e.a.0422.1} for the cases $\lambda <1$ and $\lambda = 1$ separately.

Consider the case $\lambda<1$. We have $c_\lambda \vee c_{\lambda_1} = c_{\lambda}$ as $0 < \lambda < \lambda_1 < 1$\footnote{We may adjust $\sigma$ smaller to ensure that.}. Then using $m(t) = c_\lambda t + o(t)$, we have (thanks to \eqref{e.815.ep})
\begin{align*}
-\sigma \lambda\delta t  + \lambda_1 \left|m(t) - (c_\lambda \vee c_{\lambda_1})t\right| + \ep c_\lambda t&= -\sigma \lambda\delta t + \lambda_1 \left|m(t) - c_\lambda t\right| + \ep c_\lambda t \\
&= -\sigma \lambda \delta t + o(t)+ \ep c_\lambda t \leq 0 \quad \qquad \text{ for }t\gg 1. 
\end{align*}
Hence, we can substitute the above into \eqref{e.815.4} to get
\begin{equation}\label{e.815.5}
E_1(t,z + m(t)) \leq C_3 e^{-(1+\sigma)\lambda z} (z + t)^{\bar q} e^{- {\sigma\lambda z}/{2}} \leq C_4 e^{-(1+\sigma)\lambda z} 
\end{equation}
for $t \gg 1$, $z \geq 2\delta t$.
Combining with \eqref{e.0814.2}, we derive \eqref{e.a.0422.1}.

Consider the case $\lambda=1$. then 
$$
\lambda  < \lambda_1 < 1+3\sigma, \quad  c_{\lambda_1} = \lambda_1 + \frac{1}{\lambda_1}\quad \text{ and }\quad c_\lambda = 2,$$ so that 
$$
| (c_\lambda\vee c_{\lambda_1})-c_\lambda| = |c_{\lambda_1}- c_\lambda | = \left| \lambda_1 + \frac{1}{\lambda_1} -2\right| = \frac{(\lambda_1 - 1)^2}{\lambda_1} \leq \frac{9 \sigma^2}{\lambda}.
$$
Using also \eqref{e.815.ep}, we may evaluate, for $t\gg 1$:
\begin{align}
-\sigma \lambda\delta t  + \lambda_1 \Big|m(t) - (c_\lambda \vee c_{\lambda_1})t\Big| + \ep c_\lambda t&= -\sigma \lambda\delta t + \lambda_1 \Big|c_\lambda t + o(t) - (c_\lambda \vee c_{\lambda_1}) t\Big| + \ep c_\lambda t \notag\\
&\leq -\sigma \lambda \delta t + \frac{9\sigma^2}{\lambda} t + o(t)+ \ep c_\lambda t \leq 0. 
\label{e.815.11}
\end{align}
Hence, we can substitute \eqref{e.815.11} into \eqref{e.815.4} to deduce again that \eqref{e.815.5} holds for $t\gg 1$ and $z \geq 2\delta t$. Combining with \eqref{e.0814.2}, we derive \eqref{e.a.0422.1}.
\end{proof}

We are in position to prove the main result of this section.
\begin{proof}[Proof of Proposition~\ref{lem:w-estimate}]
For ease of notation and without loss of generality, we treat the case $R=1$. 
Fix a small $\delta>0$.
Observe (and recall $m(t)$ from \eqref{m^w-def}) that for $z\geq 2\delta t$,
\beaa
\frac{w(t,m(t) + z)}{\psi(t,m(t) + z)}= 
\frac{\psi(t,m(t) + z)-E(t,m(t) + z)}{\psi(t,m(t) + z)}= 1- \frac{E(t,m(t) + z)}{\psi(t,m(t) + z)}.
\eeaa 
It suffices to estimate the quotient $E/\psi$. First, 
using \eqref{e.0813.2} in Lemma~\ref{lem:Bramson-0813}, 
there exists $C'>0$ such that all large $t$,
\begin{equation}\label{psi-est}
\psi(t,m(t) + z) \geq C'\left[\frac{(z + (c_\lambda - 2\lambda )t)^q}{{\gamma_{\lambda,q}(t)}}\right] e^{-\lambda z} \quad \text{ for }z \geq 2\delta t,
\end{equation}
where $\gamma_{\lambda,q}$ is defined by
\bea\label{gamma-q}
 \gamma_{\lambda,q}(t):=
\begin{cases}
t^q,&\quad \text{ if } \lambda\in(0,1),\\
t^{-3/2},&\quad \text{ if } \lambda=1,\ q<-2,\\
t^{-3/2}(\log t),&\quad \text{ if } \lambda=1,\ q=-2,\\
t^{(q-1)/2},&\quad \text{ if } \lambda=1,\ q>-2.
 \end{cases}
 \eea
By taking $\sigma>0$ small enough, we may apply Lemma \ref{lem:E} to get constants $T>0$ and $C_6>1$ such that 
\begin{align*}
 \frac{E(t,m(t) + z)}{\psi(t,m(t) + z)}  
&\leq \frac{ C {e^{- (1+\sigma)\lambda z}} 
}{C'\left[\frac{(z + (c_\lambda - 2\lambda )t)^q}{{\gamma_{\lambda,q}(t)}}\right] e^{-\lambda z}} \quad \text{ for }t \geq T,~ z \geq 2\delta t.
\end{align*}
 Hence, using $z \geq {2\delta t}$, there exists {$\delta''>0$ and } $C''>0$ such that 
\begin{equation}\label{e.0414.1} 
\sup_{z \geq 2\delta t}\frac{E(t,m(t) + z)}{\psi(t,m(t) + z)} \leq 
2C'' e^{-\delta'' t} \quad \text{ for }t\gg 1.
\end{equation}

Hence, using \eqref{e.0414.1}, it follows that 
for some $\delta_1>0$ we have 
\begin{align}\label{e.0414.2} 
\left.\frac{w(t,x)}{\psi(t,x)} \right|_{x=m(t) + z} =\left.\frac{\psi(t,x)-E(t,x)}{\psi(t,x)} \right|_{x=m(t) + z} \geq 1- \frac1{\delta_1}e^{-\delta_1 t}
\end{align}
holds uniformly in $\{(t,x):~ t \geq T,~ x \geq m(t)+ {2\delta t}\}$. Therefore, we obtain \eqref{psi-w-est}.

We now prove \eqref{e.a.0423.4}.
To see that, choose $\delta>0$ small enough so that 
{$2\lambda + 2\delta <c_{\lambda}+k$}
which implies that $X(t) > (2\lambda+ \delta)t + x_0$ for $t \gg 1$. This allows us to take $x=X(t)$ in \eqref{e.0813.1a} and \eqref{e.0813.2} in Lemma \ref{lem:Bramson-0813}.

Also, since $X(t)\leq (c_{\lambda}+1/{k})t$,
we can choose  $\epsilon=\epsilon(\delta)>0$ small enough to ensure
\begin{equation}\label{e.815.8}
    \ep(x- c_\lambda t)\leq \frac{\ep}{{k}}t  < \frac{\delta^2}{19}t \quad \text{ for }x = X(t),~t \gg 1.
\end{equation}
On the one hand, in case $\lambda \in (0,1)$, we apply \eqref{e.815.8} and \eqref{e.0813.1a} in Lemma~\ref{lem:Bramson-0813} to obtain
\begin{align}\label{ineq0423-1}
&\left[(x - 2\lambda t)^{-q} e^{\lambda x - (\lambda^2 +R)t} {\psi}(t,x)\right]_{x=X(t)}\leq \frac{(X(t)-2\lambda t + (\text{sgn}\,q)\delta t)^{q}}{(X(t)-2\lambda t )^q}[1+o(1)].
\end{align}
On the other hand, in case $\lambda =1$, we apply \eqref{e.0813.2} in Lemma~\ref{lem:Bramson-0813} to obtain
\begin{align}\label{ineq0423-2}
&\left[(x - 2\lambda t)^{-q} e^{\lambda x - (\lambda^2 +R)t} {\psi}(t,x)\right]_{x=X(t)}\geq \frac{(X(t)-2\lambda t - (\text{sgn}\,q)\delta t)^{q}}{(X(t)-2\lambda t )^q}[1+o(1)],
\end{align}
where $o(1)$ terms in \eqref{ineq0423-1} and \eqref{ineq0423-2} tend to zero as $t\to\infty$, for each fixed $\delta$ and $\ep$. Hence, by 
letting $t \to \infty$ and then $\delta \searrow 0$
and  combining with  \eqref{psi-w-est}, we obtain \eqref{e.a.0423.4}.

Finally, \eqref{w-upper-lower} follows by combining 
\eqref{e.0813.1a}, \eqref{e.0813.2},
and  \eqref{psi-w-est}. 
\end{proof}

{
\section{The Fisher-KPP equation with growing domain}\label{sec:thm1}
}

{
A key step in our analysis is to extend the classical results of Bramson concerning the homogeneous Fisher-KPP equations on the real line, to a large class of growing domains.

To this end, let $\Omega_{\zeta}$ be a growing domain given by
\beaa
\Omega_{\zeta}:=\{(t,x):\, t>\zeta(x), \ x\in\R\}, 
\eeaa
where $\zeta$ is a piecewise smooth function satisfying
\begin{equation}\label{zeta-def}
\zeta(x) = 0 \quad \text{ in }(-\infty,0] \quad \text{ and }\quad \zeta'(x)\geq0 \quad \text{ in }\mathbb R.
\end{equation}
For example, for each $k\geq 0$, the following describes the boundary of a sectorial region:
\beaa
\zeta(x)=
0 \quad \text{ if } x\leq 0, \quad \text{ and }\quad 
\zeta(x)=  kx \quad \text{ if } x> 0.
\eeaa
We consider the following KPP equation in a growing domain:
\bea\label{eq-KPP-B}
\begin{cases}
u_t=u_{xx}+f(u),&\quad (t,x)\in \Omega_{\zeta},\\
u(t,x)|_{t=\zeta(x)}=g(x),&\quad x\in\partial_P\Omega_{\zeta},
\end{cases}
\eea
{where} $f$ satisfies the assumption {\bf(F)} 
and $g\in L^{\infty}(\R)$ is a given function satisfying, for some constant $B>0$,
\beaa
0\leq g \leq B,\quad \liminf_{x\to-\infty}g(x)>0. 
\eeaa
For given constant $R>0$, the above includes the particular case of 
\bea\label{KPP-f}
f(u)=u (R- u),\quad B=R. 
\eea
}

{
Let us fix $\lambda\in(0,\sqrt{R}]$ and set $c_{\lambda}:=\lambda+R/\lambda$. For the growing domain
$\Omega_{\zeta}$, we assume that the right-most boundary point travels faster than $x=c_{\lambda} t$, so that the invasion front lags behind the expanding boundary. Precisely, we assume
\bea\label{zeta-cond}
(\exists \ep_0>0) \qquad 
{\ep_0\leq \zeta'(x) \leq  \frac{1}{c_\lambda+\ep_0} \qquad \text{ for }x \gg 1.}
\eea
{We are interested in the precise conditions on $(\lambda, \zeta(x), g(x))$ to determine the existence and properties of some function $m(t)$ such that}
\beaa
u(t,y+m(t))\to \Phi_{\lambda,R}(y)\quad  \text{ in } C_{loc}(\mathbb{R}),
\eeaa
where $\Phi_{\lambda,R}$ denotes the traveling wave solution that
satisfies \eqref{TW-B}
with a suitable spatial translation such that
\bea\label{TW-AS}
\begin{cases}
\lim\limits_{z \to +\infty} e^{\lambda z}\Phi(z) = 1,& \quad \text{ if } c_{\lambda}>2\sqrt{R}\quad (\lambda\in(0,\sqrt{R})),\\
\lim\limits_{z \to +\infty} \frac{1}{z}e^{\lambda z}\Phi(z) = 1,& \quad \text{ if } c_{\lambda}=2\sqrt{R}=:c_{\min} 
\quad (\lambda=\sqrt{R}),
\end{cases}
\eea
Hereafter, we denote $\lambda_{\min}:=\sqrt{R}$ and 
write $\Phi_{\min,R}=\Phi_{\lambda_{\min},R}$ to represent the minimal traveling front corresponding to speed $c_{\lambda}=c_{\min}$.
}

{Regarding the growing-domain problem, we have the following result, which can be viewed as
a growing-domain counterpart of Bramson's results for the Fisher--KPP equation on the real line.

\medskip

\begin{proposition}\label{thm1} 
Let $\lambda\in(0,\sqrt R]$ and $q\in\R$ be given.
Let $u(t,x)$ be the solution to \eqref{eq-KPP-B} in the growing domain $\Omega_{\zeta}$ where $\zeta(x)$ satisfies \eqref{zeta-def} and \eqref{zeta-cond}. Furthermore, assume that
\begin{align}
 &\lim_{x \to +\infty} \left[(x - 2\lambda t)^{-q} e^{\lambda x - (\lambda^2 + R)t} u(t,x)\right]_{t=\zeta(x)}=1.\label{e.0422.1}
\end{align}
Then the following holds.
\begin{itemize}
    \item[(i)] If $\lambda\in(0,\sqrt{R})$, then  
    \begin{equation}
\sup_{ {-\infty <} x \leq \zeta^{-1}(t)}  |u(t,x )  -\Phi_{\lambda,R}(x- {m_{\lambda,q}(t)}) |  \to 0\quad \text{ as }t\to\infty,
\end{equation}
where $\Phi_{\lambda,R}$ satisfies \eqref{TW-B} and \eqref{TW-AS}, and $m_{\lambda,q}(t)$ is given in \eqref{m^w}.  

\item[(ii)] If $\lambda=\sqrt{R}$, then  
there exists a bounded function $\Lambda(t)$ such that
\begin{equation}\label{conv-ii-thm1}
\sup_{{-\infty < } x\leq \zeta^{-1}(t)}  |u(t,x )  -\Phi_{\min,R}(x- {\tilde{m}_q(t)} - \Lambda(t)) |  \to 0\quad \text{ as }t\to\infty,
\end{equation}
where  $\Phi_{\min,R}$ satisfies \eqref{TW-B} and \eqref{TW-AS}  and $\tilde{m}_q(t)$ is given in \eqref{m^w-2}.
\end{itemize}
\end{proposition}
}

{
\begin{remark}
When $\Omega_{\zeta}=[0,\infty)\times \R$ is the entire upper half-plane, i.e. $\zeta(x)\equiv0$, then condition \eqref{e.0422.1} reduces to
\begin{align}
 &\lim_{x \to +\infty} x^{-q} e^{\lambda x} u(0,x)=1,\label{e.0422.2}
\end{align}
depending on the values of $\lambda\in(0,\sqrt{R}]$ and $q\in\R$, then
the conclusion of {Proposition~\ref{thm1}} has been established by Bramson \cite[pp. 6-8,~187,188]{bramson1983convergence}. We base our analysis upon extending these results.
\end{remark}
}

{
\begin{remark}
{Proposition~\ref{thm1}}(i) asserts that the $O(1)$ correction in the level set can be taken to be {exactly} $\frac{q}{\lambda}\log(c_\lambda - 2\lambda)$, with the understanding that the traveling wave is normalized by $e^{\lambda x}\Phi_{\lambda,R}(x)  \to 1$ as $x\to\infty$.
\end{remark}
}


{We prepare the proof {Proposition~\ref{thm1}} with two separate lemmas.}

\begin{lemma}\label{lem:limsup-case}
Let $u(t,x)$ be the solution to \eqref{eq-KPP-B} and $\zeta(x)$ be as given in \eqref{zeta-def} satisfying \eqref{zeta-cond}.
Assume further that 
\begin{align}
\limsup_{t \to +\infty} \left[(x - 2\lambda t)^{-q} e^{\lambda x - (\lambda^2 +R)t} u(t,x)\right]_{x=\zeta^{-1}(t)} \leq  1.\label{limsup}
\end{align}
holds for some $\lambda \in (0,\sqrt R]$ and $q \in \mathbb{R}$. Then the following holds.
\begin{itemize}
    \item[(i)] If $\lambda\in(0,\sqrt{R})$, then  
\begin{equation*}
\limsup_{t\to\infty }\Big[\sup_{ {-\infty} < x \leq \zeta^{-1}(t)}\Big(u(t,x )  -\Phi_{\lambda,R}(x- m_{\lambda,q}(t))\Big)\Big] \leq  0,
\end{equation*}
where $m_{\lambda,q}(t)$ is defined in \eqref{m^w}.
\item[(ii)] If $\lambda=\sqrt{R}$, then there exists a constant $C$ such that
\begin{equation*}
\limsup_{t\to\infty }\Big[\sup_{ {-\infty}< x \leq \zeta^{-1}(t)}\Big(u(t,x )  -\Phi_{\min,R}(x- \tilde{m}_{q}(t)-C)\Big)\Big] \leq  0,
\end{equation*}
 where $\tilde{m}_{q}(t)$ is defined in \eqref{m^w-2}.
\end{itemize}
\end{lemma}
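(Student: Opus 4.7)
The plan is to construct, for each small $\delta>0$, a comparison solution $\bar w_\delta$ of the homogeneous KPP equation $\bar w_t=\bar w_{xx}+f(\bar w)$ on $\mathbb R\times[0,\infty)$ whose values on the parabolic boundary of $\Omega_\zeta$ dominate those of $u$. The parabolic comparison principle then gives $u\leq\bar w_\delta$ throughout $\Omega_\zeta$, and the large-time behaviour of $\bar w_\delta$ is governed by Bramson's Theorems~\ref{lem:bramson-pull}--\ref{lem:bramson-no-pull}. Letting $\delta\to 0$ at the end will produce the claimed limsup bound, with the exact shift $m_{\lambda,q}(t)$ in case (i) and an $O(1)$ bounded shift $C$ in case (ii).

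For the construction, I would set $a_\delta:=\tfrac{1}{\lambda}\log(1+\delta)>0$ and choose an initial datum $\bar w_{0,\delta}$ of the form $\bar w_{0,\delta}(x):=v_0(x-a_\delta)$, where $v_0$ is a standard Bramson datum with $v_0(y)=y^q e^{-\lambda y}$ for $y\geq y_0$ and $v_0\equiv B$ on $(-\infty,-X_0]$ for $X_0$ chosen large (so that $\bar w_{0,\delta}\geq g$ on $(-\infty,0]$ and the $B$-plateau persists long enough to dominate $u$ on the near-origin portion of the lateral boundary by continuity). Then $\bar w_\delta(t,x)=v(t,x-a_\delta)$ where $v$ is the corresponding standard Bramson solution. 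Hypothesis~\eqref{limsup} yields $x_\delta$ such that, for $x\geq x_\delta$,
\[
u(\zeta(x),x)\leq (1+\tfrac{\delta}{4})(x-2\lambda\zeta(x))^q\,e^{-\lambda x+(\lambda^2+R)\zeta(x)},
\]
while condition~\eqref{zeta-cond} pins the curve $t\mapsto\zeta^{-1}(t)-a_\delta$ inside the admissible window $(c_\lambda+k)t\leq\,\cdot\,\leq(c_\lambda+1/k)t$ of~\eqref{e.a.0423.4} for some $k=k(\epsilon_0)>0$, so that Proposition~\ref{lem:w-estimate} applied to $v$ delivers
\[
\bar w_\delta(\zeta(x),x)=v(\zeta(x),x-a_\delta)=[1+o(1)](1+\delta)(x-2\lambda\zeta(x))^q\,e^{-\lambda x+(\lambda^2+R)\zeta(x)}
\]
as $x\to\infty$, which beats the preceding upper bound on $u(\zeta(x),x)$ by a margin of order $\tfrac{3\delta}{4}[1+o(1)]$.

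Finally, Bramson's Theorem~\ref{lem:bramson-pull} applied to $v$ gives $\sup_{x\in\mathbb R}|v(t,x)-\Phi_{\lambda,R}(x-m_{\lambda,q}(t))|\to 0$, whence
\[
\sup_{x\in\mathbb R}\Big|\bar w_\delta(t,x)-\Phi_{\lambda,R}\big(x-m_{\lambda,q}(t)-a_\delta\big)\Big|\to 0.
\]
Combining with $u\leq\bar w_\delta$ and invoking uniform continuity of $\Phi_{\lambda,R}$, then sending $\delta\to 0$ so that $a_\delta\to 0$, yields (i). In case (ii), Theorem~\ref{lem:bramson-no-pull} produces a bounded $\Lambda_\delta(t)$ in place of the log correction, and the constant $C$ of the statement can be taken as $a_\delta+\sup_t|\Lambda_\delta(t)|$ for any fixed $\delta$. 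The hard part of the argument is the lateral-boundary domination: securing a uniform $(1+\delta)$-margin between $\bar w_\delta(\zeta(x),x)$ and $u(\zeta(x),x)$ as $x\to\infty$ requires the sharp two-sided asymptotic~\eqref{e.a.0423.4}, whose applicability along the curve $\zeta^{-1}(t)$ rests entirely on the linear pinching of $\zeta$ provided by~\eqref{zeta-cond}.
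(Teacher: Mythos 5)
Your overall strategy mirrors the paper's: compare $u$ with a horizontally translated homogeneous KPP solution acting as a supersolution, establish domination on the parabolic boundary (asymptotically on the far portion via \eqref{limsup} and Proposition~\ref{lem:w-estimate}, directly on a precompact near portion), invoke Bramson's Theorems~\ref{lem:bramson-pull}--\ref{lem:bramson-no-pull}, and then send the shift to zero (case (i)) or absorb it into the $O(1)$ constant (case (ii)). Your large-$x$ computation showing $\bar w_\delta(\zeta(x),x)\sim (1+\delta)(x-2\lambda\zeta(x))^q e^{-\lambda x+(\lambda^2+R)\zeta(x)}$ via $a_\delta=\tfrac1\lambda\log(1+\delta)$ is correct, and so is the verification that $\zeta^{-1}(t)-a_\delta$ lies in the admissible window of Proposition~\ref{lem:w-estimate}.

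The gap is in the construction of the supersolution's initial datum, and it bites on the precompact portion of the lateral boundary. You take $v_0\equiv B$ on $(-\infty,-X_0]$, so $v_0\le B$ everywhere and $v_0\not\equiv B$; the strong maximum principle then forces $v(t,\cdot)<B$ strictly for every $t>0$, hence $\bar w_\delta(t,\cdot)<B$ for $t>0$. The boundary datum is only assumed to satisfy $0\le g\le B$, so $g(x)$ may equal $B$ at boundary points where $\zeta(x)>0$ (e.g.\ $g\equiv B$ on $[0,1]$ is consistent with~\eqref{limsup}); at such a point $\bar w_\delta(\zeta(x),x)<B=u(\zeta(x),x)$ and the comparison fails. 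No choice of $X_0$ repairs this, since a height-$B$ plateau can never make $v(t,\cdot)\ge B$ at positive time. There is also a direction issue: with the plateau at $(-\infty,-X_0]$, letting $X_0\to\infty$ pushes it to the \emph{left}, which makes $\bar w_{0,\delta}$ smaller near $x=0$, so the asserted domination ``$\bar w_{0,\delta}\ge g$ on $(-\infty,0]$'' and the ``plateau persists'' claim do not follow.

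The paper avoids this by taking a plateau whose \emph{height} exceeds $B$ and which extends to the right: $\overline w^M(0,x)=M$ for $x\le M$, with $M\ge 2\max\{B,\|g\|_{L^\infty}\}$. Assumption (W0) only constrains the exponential tail, so Theorems~\ref{lem:bramson-pull}--\ref{lem:bramson-no-pull} still give convergence to $\Phi_{\lambda,R}$; meanwhile, since $f(s)<0$ only drives $s$ down \emph{toward} $B$, the elevated plateau stays strictly above $B$ on any fixed precompact set once $M$ is large, giving $\overline w^M(t,x-\epsilon')\ge B\ge u$ there. The paper also explicitly notes that the threshold time $t_{\epsilon'}$ for the asymptotic comparison is independent of $M$ (by monotonicity of $\overline w^M$ in $M$), breaking the potential circularity between the two parameters; your argument should make the analogous independence explicit. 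With these corrections, your proof would coincide with the paper's.
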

\begin{proof}
We first deal with (i).
Let $q$ and $\lambda\in(0,\sqrt{R})$ be fixed. For each $M>0$ {sufficiently large}, we denote  
$\overline{w}^M(t,x)$ as the solution of \eqref{e.0401.2} with a smooth nonincreasing initial data such that
\beaa
\overline{w}^M(0,x)=
\begin{cases}
{M},& \quad x\leq M;\\
x^{q}e^{-\lambda x},& \quad x> M+1.
\end{cases}
\eeaa
By Theorem \ref{lem:bramson-pull}, it is clear that for any $M \in [2\max \{B, \|g\|_{L^\infty}\},{\infty})$, we have $\overline{w}^M(t,x) = \Phi_\lambda(x-m_{\lambda,q}(t)) + o(1)$ uniformly in $x$, as $t\to\infty$. Hence, 
it suffices to show that for each  $\ep'>0$, we can choose $M$ such that 
\begin{equation}\label{e.a.0422.10}
u(t,x) \leq \overline{w}^M(t,x-\ep')\quad \text{ in }\Omega_{\zeta}.
\end{equation}

{Let us fix an arbitrarily small $\delta\in(0,\ep_0/3)$, where $\ep_0>0$ is given in \eqref{zeta-cond}.}
By Proposition \ref{lem:w-estimate}, {for some $\delta_1>0$ and $C>0$,}
\begin{equation}\label{e.a.0422.5}
 {(1 - Ce^{-\delta_1t})\overline{\psi}^M (t,x)\leq \overline{w}^M (t,x) \leq \overline{\psi}^M(t,x)}
\end{equation}
{for $x\geq m_{\lambda,q}(t)+2\delta t$ and for all large $t$,}
where $\overline\psi^M$ is the solution the linear heat equation \eqref{e.heat} with initial data $\overline{w}^M(0,x)$.

Next, we claim that
\bea \label{e.a.0422.6}
\limsup_{t \to +\infty} \left[(x - 2\lambda t)^{-q} e^{\lambda x - (\lambda^2 +R)t}  \overline{\psi}^M (t,x)\right]_{x=\zeta^{-1}(t)}=1.
\eea
To see that, we use 
$c_\lambda + 2\delta < (\zeta^{-1})'(t) \leq {1}/{\ep_0}$ for $t\gg 1$ (by \eqref{zeta-cond}), so that we can choose  $\epsilon=\epsilon(\delta)>0$ small enough to ensure
$$
\ep(x- c_\lambda t) < \frac{\delta^2}{19}t \quad \text{ for }x = \zeta^{-1}(t),~t \gg 1.
$$
Hence, by  \eqref{e.0813.1a} of Lemma~\ref{lem:Bramson-0813}
\begin{align*}
&\left[(x - 2\lambda t)^{-q} e^{\lambda x - (\lambda^2 +R)t} \overline{\psi}^M(t,x)\right]_{x=\zeta^{-1}(t)}\leq \frac{(\zeta^{-1}(t)-2\lambda t + (\text{sgn}\,q)\delta t)^{q}}{(\zeta^{-1}(t)-2\lambda t )^q}[1+o(1)],
\end{align*}
where we also used $c_{\lambda}=\lambda+R/\lambda$.  Letting first $t \to \infty$ and then $\delta \searrow 0$, we deduce \eqref{e.a.0422.6}.
Finally, combining with  \eqref{e.a.0422.5} we obtain
\bea \label{e.a.0422.7}
\limsup_{t \to +\infty} \left[(x - 2\lambda t)^{-q} e^{\lambda x - (\lambda^2 +R)t} \overline{w}^M (t,x)\right]_{x=\zeta^{-1}(t)}=1.
\eea

Next, we compare $u$ with a translation of $\overline{w}^M$ over the parabolic boundary $\partial_P\Omega$.
By \eqref{e.0422.1} and \eqref{e.a.0422.7}, we see that for any $\ep'>0$, there exists $t_{\ep'}>0$  such that 
\bea \label{e.a.0422.9}
u(t,\zeta^{-1}(t))\leq \overline{w}^M(t,\zeta^{-1}(t)-\ep')\quad \text{ for all } t\geq t_{\ep'}.
\eea
Note that $t_{\ep'}$ is independent of $M$ large, since $\bar{w}^M$ is monotone increasing in $M$.
Since $u(t,x) \leq {\max\{B, \|g\|_{L^{\infty}}\}}$ for all $t>0$, $x \in \mathbb{R}$, 
and  $\{(t,x)\in \partial_P\Omega,~ t\in(0,t_{\ep}]\}$ is precompact,
we can choose $M=M({\ep'})\gg1$ such that 
\bea \label{e.a.0422.8}
u(t,x)\leq \overline{w}^M(t,x-\ep')\quad \text{ for all } (t,x)\in\{(t,x)\in \partial_P\Omega,~ t\in[0,t_{\ep'}]\}.
\eea
Having verified \eqref{e.a.0422.9} and \eqref{e.a.0422.8}, we can invoke comparison principle to deduce
\eqref{e.a.0422.10}, and completes the proof of part (i) of the theorem.

The proof of part (ii) follows the same argument as in (i), except that {Theorem}~\ref{lem:bramson-pull} is replaced by {Theorem}~\ref{lem:bramson-no-pull}. 
Note that Theorem~\ref{lem:bramson-no-pull} contains a O(1) correction.
This completes the proof.
\end{proof}

\begin{remark}\label{rk:3.1}
If \eqref{limsup} is relaxed as  
\begin{align*}
\limsup_{t \to +\infty} \left[(x - 2\lambda t)^{-q} e^{\lambda x - (\lambda^2 +R)t} u(t,x)\right]_{x=\zeta^{-1}(t)} \leq  k_0
\end{align*}
for some constant $k_0>0$, then the conclusion 
(ii) of Lemma~\ref{lem:limsup-case}  holds without change; while conclusion (i) becomes 
\begin{equation*}
\limsup_{t\to\infty }\Big[\sup_{ {-\infty}< x \leq \zeta^{-1}(t)}\Big(u(t,x )  -\Phi_{\lambda,R}(x- m_{\lambda,q}(t)-h)\Big)\Big] \leq  0
\end{equation*}
where $h = \frac{1}{\lambda}\log k_0$. Indeed, it suffices to replace
$\overline{w}^{M}(t,x)$ with 
$\overline{w}^{M}(t,x-h)$ in the proof.
\end{remark}



\begin{lemma}\label{lem:liminf-case}
Let $u(t,x)$ be the solution to \eqref{eq-KPP-B} and $\zeta(x)$ be as given in \eqref{zeta-def} satisfying \eqref{zeta-cond}.
Assume further that 
\begin{align}
\liminf_{t \to +\infty} \left[(x - 2\lambda t)^{-q} e^{\lambda x - (\lambda^2 +R)t} u(t,x)\right]_{x=\zeta^{-1}(t)} \geq  1.\label{liminf}
\end{align}
holds for some $\lambda \in (0,\sqrt R]$ and $q \in \mathbb{R}$. Then the following holds.
\begin{itemize}
    \item[(i)] If $\lambda\in(0,\sqrt{R})$, then  
\begin{equation*}
\liminf_{t\to\infty }\Big[\inf_{ {-\infty}< x \leq \zeta^{-1}(t)}\Big(u(t,x )  -\Phi_{\lambda,R}(x- m_{\lambda,q}(t))\Big)\Big] \geq  0,
\end{equation*}    
    where $m_{\lambda,q}$ is defined in \eqref{m^w}.
\item[(ii)] If $\lambda=\sqrt{R}$, then  
then there exists a constant $C$ such that
\begin{equation*}
\liminf_{t\to\infty }\Big[\inf_{{-\infty}< x \leq \zeta^{-1}(t)}\Big(u(t,x )  -\Phi_{\min,R}(x- \tilde{m}_q(t)-C)\Big)\Big] \geq  0,
\end{equation*}
where $\tilde{m}_q$ is defined in \eqref{m^w-2}.
\end{itemize}
\end{lemma}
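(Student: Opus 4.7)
The plan is to mirror the proof of Lemma~\ref{lem:limsup-case} by constructing a subsolution rather than a supersolution, and applying the comparison principle from below. For each large $M$, I would let $\underline{w}^M(t,x)$ denote the solution of \eqref{e.0401.2} on $(0,\infty)\times\mathbb{R}$ with smooth initial data
\[
\underline{w}^M(0,x) = \begin{cases} \varepsilon_0 & x \le 0, \\ x^q e^{-\lambda x} & x \ge M+1, \end{cases}
\]
and smooth interpolation on $[0,M+1]$, where $\varepsilon_0\in(0,B)$ is to be chosen and $M$ is large enough that $M^q e^{-\lambda M}\le \varepsilon_0$. Since this data satisfies \textbf{(W0)} with parameters $\lambda,q$ and unit tail coefficient, Theorem~\ref{lem:bramson-pull} (in case (i)) or Theorem~\ref{lem:bramson-no-pull} (in case (ii)) gives $\underline{w}^M(t,\cdot)-\Phi_{\lambda,R}(\cdot-m(t))\to 0$ uniformly (with $m=m_{\lambda,q}$ in (i) and $m=\tilde m_q+\Lambda$ with $\Lambda$ bounded in (ii)), and Proposition~\ref{lem:w-estimate} provides $\lim_{t\to\infty}\bigl[(x-2\lambda t)^{-q}e^{\lambda x-(\lambda^2+R)t}\underline{w}^M(t,x)\bigr]_{x=\zeta^{-1}(t)}=1$. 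Combined with the hypothesis \eqref{liminf}, for every $\varepsilon'>0$ there exists $t_{\varepsilon'}>0$ such that $\underline{w}^M(t,\zeta^{-1}(t)+\varepsilon')\le u(t,\zeta^{-1}(t))$ for all $t\ge t_{\varepsilon'}$.

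It then remains to extend this inequality (with translated argument) to the rest of the parabolic boundary of $\Omega_\zeta\cap\{t\in[0,t_{\varepsilon'}]\}$, namely the compact lateral arc $\{(t,\zeta^{-1}(t)):0\le t\le t_{\varepsilon'}\}$ and the bottom ray $\{(0,x):x\le 0\}$. On the compact arc, by choosing $M$ sufficiently large the tail $x^q e^{-\lambda x}$ is exponentially small, so $\underline{w}^M(\cdot,\cdot+\varepsilon')$ can be made smaller than any prescribed positive lower bound of $u$ there. Once the comparison holds on the full parabolic boundary, the maximum principle on $\Omega_\zeta$ yields $u(t,x)\ge\underline{w}^M(t,x+\varepsilon')$ throughout; letting $\varepsilon'\to 0^+$ together with the uniform convergence to the traveling wave profile yields conclusions (i) and (ii), with the constant $C$ in (ii) absorbing $\sup_t|\Lambda(t)|$ from Theorem~\ref{lem:bramson-no-pull}.

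The main obstacle will be the comparison on the bottom ray, because the hypothesis $\liminf_{x\to-\infty}g(x)>0$ does not preclude $g$ from vanishing at points of $(-\infty,0]$, so the naive choice $\varepsilon_0\le\inf_{x\le 0}g(x)$ may force $\varepsilon_0=0$. I would address this by a time-delay argument: pick a small $T_0>0$ and instead start $\underline{w}^M$ from time $T_0$, using parabolic regularity and $\liminf_{x\to-\infty}g>0$ (via comparison with the linear heat equation applied to a positive truncation of $g$) to ensure $\delta_0:=\inf_{x\le 0}u(T_0,x)>0$, so that $\varepsilon_0\le\delta_0$ is admissible. The $T_0$-shift produces a translation $c_\lambda T_0+o(1)$ in the asymptotic wave location of $\underline{w}^M$, which I would compensate by rescaling the tail coefficient of the initial data to $\kappa=e^{(\lambda^2+R)T_0}$; the effect $-\tfrac{1}{\lambda}\log\kappa=-c_\lambda T_0$ on the wave location cancels the delay exactly, and Remark~\ref{rk:3.1} confirms that the resulting normalization at the lateral boundary remains compatible with \eqref{liminf}. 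A residual technical issue---that $g$ may still vanish at the corner $(T_0,\zeta^{-1}(T_0))$ or along the lateral arc, producing a strict mismatch with the positive subsolution---is absorbed by working throughout with the $\varepsilon'$-translated $\underline{w}^M(\cdot,\cdot+\varepsilon')$ and only passing $\varepsilon'\to 0^+$ at the very end.
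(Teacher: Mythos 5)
Your overall strategy — construct a subsolution $\underline{w}^M$ of \eqref{e.0401.2} with the correct tail $x^qe^{-\lambda x}$, appeal to Theorems~\ref{lem:bramson-pull}/\ref{lem:bramson-no-pull} for its long-time front location, use Proposition~\ref{lem:w-estimate} and \eqref{liminf} to compare on the lateral boundary for $t\geq t_{\ep'}$, compare on the remaining compact piece, and apply the comparison principle — is exactly the paper's approach. However, there is a genuine gap in your treatment of the compact portion of the parabolic boundary, and it stems from your choice of initial data for $\underline{w}^M$.

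You set $\underline{w}^M(0,x)=\ep_0$ for $x\leq 0$ with $\ep_0$ \emph{fixed}, whereas the paper uses
\[
\underline{w}^M(0,x)=\tfrac{1}{M}\chi_{(-\infty,M]}+x^qe^{-\lambda x}\chi_{[M,\infty)},
\]
which has the crucial property \eqref{cov-0}: as $M\to\infty$, $\underline{w}^M\to 0$ uniformly on $[0,L]\times(-\infty,L]$ for every $L$ (because $\|\underline{w}^M(0,\cdot)\|_{L^\infty}=\max\{M^{-1},\sup_{x\geq M}x^qe^{-\lambda x}\}\to 0$, and the semigroup for \eqref{e.0401.2} is $L^\infty$-contractive for small data over bounded time intervals). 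With a fixed $\ep_0$, your $\underline{w}^M(0,\cdot)$ does \emph{not} go to zero in $L^\infty$, and consequently $\underline{w}^M(t,\cdot)$ stays of order $\ep_0$ (or larger, since $f>0$ on $(0,B)$ pushes the solution upward) at bounded $(t,x)$ for \emph{all} $M$. In particular, the value $\underline{w}^M(t,\zeta^{-1}(t)+\ep')$ on the compact lateral arc $T_0\leq t\leq t_{\ep'}$ is \emph{not} controlled by the exponentially small tail at $x=M+1$; it is governed by the heat-kernel average of the initial data near $x\approx \zeta^{-1}(t)$, which remains comparable to $\ep_0$ as $M\to\infty$. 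Your claim that ``by choosing $M$ sufficiently large the tail $x^qe^{-\lambda x}$ is exponentially small, so $\underline{w}^M(\cdot,\cdot+\ep')$ can be made smaller than any prescribed positive lower bound of $u$'' on the compact arc is therefore false. Your time-delay repair handles only the bottom slice $\{t=T_0,\,x\leq 0\}$ — it does not give you $\underline{w}^M(t,\zeta^{-1}(t)+\ep')\leq u(t,\zeta^{-1}(t))$ on the rest of the finite-time arc, and the final hand-wave involving the $\ep'$-shift does not resolve this either.

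The fix is simply to use the paper's initial data. Taking the left plateau to be $1/M$ rather than a fixed $\ep_0$ preserves $\liminf_{x\to-\infty}\underline{w}^M(0,x)>0$ (so \textbf{(W0)} and Theorems~\ref{lem:bramson-pull}/\ref{lem:bramson-no-pull} still apply, with a front location $m_{\lambda,q}(t)$ resp.\ $\tilde m_q(t)+\Lambda(t)$ that is independent of $M$, since it is determined by the tail alone), while simultaneously giving $\underline{w}^M\to 0$ on compacts. Then the constant $\kappa_0>0$ bounding $u$ from below on $\{(t,x)\in\partial_P\Omega':1\leq t\leq t_{\ep'}\}$ (which exists via the strong maximum principle and $\liminf_{x\to-\infty}u_0>0$) dominates $\underline{w}^M(\cdot,\cdot+\ep')$ once $M$ is large; this also makes the $T_0$-shift and the associated tail-coefficient rescaling by $\kappa=e^{(\lambda^2+R)T_0}$ unnecessary. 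With that single modification your argument would align with the paper's proof.
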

\begin{proof}
We first deal with (i).
Let $q\in \mathbb R$ and $\lambda\in(0,\sqrt R)$ be fixed. For each $M>0$, we denote 
$\underline{w}^M(t,x)$ as the solution of \eqref{e.0401.2} with 
\beaa
\underline{w}^M(0,x)=\frac{1}{M}\chi_{(-\infty,M]}+x^q e^{-\lambda x}\chi_{[M,\infty)}.
\eeaa
In view of Theorem~\ref{lem:bramson-pull}, we see that for each $M>0$, $\underline{w}^M(t,z+m_{\lambda,q}(t))= \Phi_{\lambda,R}(z) + o(1)$  as $t\to\infty$ uniformly in $z$, and that the limit $\Phi_{\lambda,R}$ is independent of $M$. Therefore, it suffices to prove that for each $\ep'>0$, there exists $M>0$ such that 
\bea\label{goal-3.4}
u(t,x)\geq \underline{w}^M(t,x+\ep') \quad \text{ in }\{(t,x)\in \Omega:~ t \geq 1\}
\eea

Note that $\liminf\limits_{x\to-\infty}\underline{w}^M(0,x)>0$
and 
\bea\label{cov-0}
\text{ as }M\to\infty,\quad \underline{w}^M\to 0 \quad \text{ uniformly in }[0,L]\times(-\infty,L],
\eea
for any given $L>0$.
As in the proof of Lemma~\ref{lem:limsup-case}, we can derive 
\bea\label{limit-2}
\liminf_{t \to +\infty} \left[(x - 2\lambda t)^{-q} e^{\lambda x - (\lambda^2 +R)t} \underline{w}^M(t,x)\right]_{x=\zeta^{-1}(t)}= 1.
\eea

Next, let $\Omega' = \Omega\cap \{t\geq 1\}$. We will compare $u$ with a translation of $\underline{w}^M$ over the parabolic boundary 
of $\Omega'$.
By \eqref{e.0422.1} and
\eqref{limit-2}, we see that for any $\ep'>0$, there exists $t_{\ep'}>0$ (independent of $M$) such that 
\bea\label{P-bd-1}
u(t,\zeta^{-1}(t))\geq \underline{w}^M(t,\zeta^{-1}(t)+\ep')\quad \text{ for all } t\geq t_{\ep'}.
\eea

Next, since $\liminf\limits_{x\to-\infty}u_0(x)>0$, by the Harnack inequality, there exists $\kappa_0>0$ such that
$$
u(t,x) \geq 2\kappa_0 \quad \text{ in }\{(t,x)\in \partial_P \Omega': 1 \leq t \leq t_{\ep'}\}. 
$$
Thanks to \eqref{cov-0}, we can take $M=M_{\ep'}\gg1$ such that 
\bea\label{P-bd-2}
\underline{w}^M(t,x+\ep')<\kappa_0\leq u(t,x)
\quad 
\text{ in }\{(t,x)\in \partial_P \Omega': 1 \leq t \leq t_{\ep'}\}. 
\eea
Combining \eqref{P-bd-1} and \eqref{P-bd-2}, we can apply the comparison principle to conclude that 
\beaa
u(t,x)\geq \underline{w}^M(t,x+\ep')\quad \text{ for all } (t,x)\in\Omega\cap \{t\geq 1\}.
\eeaa
This proves \eqref{goal-3.4} and the proof of part (i) is complete.

The proof of part (ii) follows the same argument as in (i), except that Theorem~\ref{lem:bramson-pull} is replaced by  Theorem~\ref{lem:bramson-no-pull}. 
This completes the proof.
\end{proof}

\begin{remark}\label{rk:3.2}
Analogously to Remark~\ref{rk:3.1}, 
if \eqref{liminf} is relaxed as
\begin{align*}
\liminf_{t \to +\infty} \left[(x - 2\lambda t)^{-q} e^{\lambda x - (\lambda^2 +R)t} u(t,x)\right]_{x=\zeta^{-1}(t)} \geq  k_0
\end{align*}
for some constant $k_0>0$, then the conclusion (ii) of Lemma~\ref{lem:liminf-case} still holds, while (i) becomes 
\begin{equation*}
\liminf_{t\to\infty }\Big[\inf_{{-\infty}< x \leq \zeta^{-1}(t)}\Big(u(t,x )  -\Phi_{\lambda,R}(x- m_{\lambda,q}(t)-h)\Big)\Big] \geq  0
\end{equation*}
where $h = \frac{1}{\lambda}\log k_0$.
\end{remark}

We are ready to prove {Proposition~\ref{thm1}}.

\begin{proof}[Proof of {Proposition~\ref{thm1}}]
The proof of (i) follows directly by combining  Lemma~\ref{lem:limsup-case}~(i) and Lemma~\ref{lem:liminf-case}~(i). 
For part (ii), by combining Lemma~\ref{lem:limsup-case} (ii) and Lemma~\ref{lem:liminf-case}~(ii), 
{it follows that $\xi_{b}(t)=\tilde{m}_{q}(t)+O(1)$ as $t\to\infty$. Furthermore, for each $n\in\mathbb{N}$, consider 
\beaa
u_n(t,x)=u(t+t_n,x+\tilde{m}_{q}(t_n)),\quad t\in\R,\ -\infty<x+\tilde{m}_{q}(t_n)\leq \zeta^{-1}(t+t_n).
\eeaa
Standard compactness arguments imply that, up to the extraction of a subsequence, $u_n$ converges locally uniformly in $\R^2$ to a function $u_{\infty}$, which solves $u_t=u_{xx}+f(u)$ and satisfies 
$0\leq u_{\infty}\leq B$ in $\R^2$. Moreover, by again invoking Lemma~\ref{lem:limsup-case} (ii) and Lemma~\ref{lem:liminf-case}~(ii), we deduce that there exist two constants $C_1,C_2\in\R$ such that 
\beaa
\Phi_{\min,R}(x-c_{\min}t+C_2)\leq u_{\infty}(t,x)\leq \Phi_{\min,R}(x-c_{\min}t+C_2)\quad \text{ for all } (t,x)\in\R^2
\eeaa
Thanks to the Liouville-type result in 
\cite[Theorem 3.5]{beresycki2007generalized}, there exists $\tilde{C}\in\R$ such that
\beaa
 u_{\infty}(t,x)=\Phi_{\min,R}(x-c_{\min}t+\tilde{C})\quad \text{ for all } (t,x)\in\R^2.
\eeaa
In particular, this implies that $u_n(0,\cdot)\to \Phi_{\min,R}(\cdot+\tilde{C})$ as $n\to\infty$ in $C_{loc}(\R)$, where $\tilde{C}$ may depend on the choice of subsequence $\{t_n\}$. In addition, using the facts 
$\Phi_{\min,R}(-\infty)=B$, $\Phi_{\min,R}(+\infty)=0$ and
that
\begin{align*}
 &\liminf_{t\to\infty}\Big[\min_{0\leq x\leq \tilde{m}_q(t)-C} u(t,x)\Big]\to B\quad \text{ as } C\to\infty,\\   
 &\limsup_{t\to\infty}\Big[\max_{ x\geq \tilde{m}_q(t)+C} u(t,x)\Big]\to0\quad \text{ as } C\to\infty,
\end{align*}
we can obtain \eqref{conv-ii-thm1}. This completes the proof.
}
\end{proof}

\section{Estimates for the problem with shifting environments}
\label{sec:4}

In this section, we consider the solution $u(t,x)$ of the shifting environment problem \eqref{main-eq} in the entire space $(t,x) \in (0,\infty)\times \mathbb R$.

\subsection{A lemma due to Hamel et al.}\label{subsec4-1}

Let $\beta\geq2$ and {$\eta\in\mathbb{R}$} and
$W(t,x)$ be the solution of
\begin{equation}\label{w-eq}
\begin{cases}
W_t = W_{xx} &\text{ for }t>0,~ x> \widehat{X}_{\eta}(t):= \beta t - \eta \log (t+t_0),\\
W(t,\widehat{X}_{\eta}(t)) = 0 &\text{ for }t>0,
\end{cases}
\end{equation}
where the initial data $W(0,x)=W_0(x)\geq(\not\equiv)0$ in $(0,\infty)$ and has
compact support, and $t_0>0$ will be determined later. 
Set $\phi(t,y):= W(t,\widehat{X}_{\eta}(t) + y)$. Then $\phi$ satisfies 
\begin{equation}\label{e.phiphi}
\begin{cases}
\dps\phi_t=\phi_{yy}+\Big(\beta-\frac{\eta}{t+t_0}\Big)\phi_y &\text{ for }t>0,~ y>0,\\
\phi(t,0)=0 &\text{ for }t>0.
\end{cases}
\end{equation}

The following result follows from adapting the arguments due to Hamel et al. \cite[Lemmas 2.1 and 2.2]{hamel2013short}. For completeness, we present the proof of the following result in Appendix \ref{sec.A.1}.

\begin{lemma}
\label{e.a.lem.1}
For each $\beta \geq 2$, $\eta \in\R$ and $t_0 \gg 1$, let $\phi$ be a solution of \eqref{e.phiphi} with nonnegative, compactly supported initial data $\phi_0 \not\equiv 0$. Then 
there exists constants $C,~C_1,~t_1>0$ depending on $\phi_0$ such that
\bea\label{hatphi-formula}
{\phi}(t,y)=\frac{(t+t_0)^{\frac{\beta}{2}\eta-\frac{3}{2}}}{t_0^{\frac{\beta}{2}\eta-1}}y e^{-\frac{\beta}{2}y-\tfrac{\beta^2}4t}C\Big[ e^{-\frac{y^2}{4(t+t_0)}}+h(t,y)\Big],\quad  t>0,\ y\geq 0,
\eea
where
    \begin{align} 
 &|h(t,y)| \leq C_1 \sqrt{\tfrac{t_0}{t + t_0}} &\text{ for }~&t \geq t_1,~ y > 0, \label{e.a.1a}\\
 &\frac{1}{C_1}\leq \frac{{\phi}(t,y)}{t^{-\tfrac32 + \tfrac{\beta\eta}2} y e^{-\tfrac{\beta y}2 - \tfrac{y^2}{4t} - \tfrac{\beta^2}4 t}}\leq C_1  &\text{ for }~&t\geq t_1,~0 \leq y \leq \sqrt{1+t},\label{e.a.2}\\
    &\frac{1}{C_1}{t^{-\tfrac32 + \tfrac{\beta\eta}2}e^{- \tfrac{\beta^2}4t}}\leq {\sup_{y\geq 0} {\phi}(t,y)}\leq C_1 {t^{-\tfrac32 + \tfrac{\beta\eta}2}e^{- \tfrac{\beta^2}4t}}&\text{ for }~&t\geq t_1.\label{e.a.3}
    \end{align}
\end{lemma}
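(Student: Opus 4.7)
The plan is to reduce \eqref{w-eq}, after subtracting the moving boundary, to a small perturbation of the Dirichlet heat equation on the half-line, and then extract the asymptotic form from Duhamel's formula applied with the explicit half-line heat kernel.

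First, I would perform the gauge substitution $\phi(t,y) = B(t)\,e^{a(t)y}\,\psi(t,y)$ with $a(t) = -\tfrac{\beta}{2} + \tfrac{\eta}{2(t+t_0)}$ chosen to eliminate the first-order drift $2a(t) + (\beta - \eta/(t+t_0)) = 0$, and $B(t)$ chosen so that $B'/B = -a^2$. Integrating the Bernoulli equation for $B$ yields
\[
B(t) = B_0\,e^{-\beta^2 t/4}\Bigl(\tfrac{t+t_0}{t_0}\Bigr)^{\beta\eta/2}\exp\!\Bigl(\tfrac{\eta^2}{4(t+t_0)} - \tfrac{\eta^2}{4t_0}\Bigr),
\]
and a direct computation then shows that $\psi$ satisfies $\psi_t = \psi_{yy} + V(t,y)\psi$ on $\{y>0\}$ with the Dirichlet condition $\psi(t,0)=0$, where $V(t,y) = -a'(t)\,y = \tfrac{\eta y}{2(t+t_0)^2}$ is a small spatially-linear potential and $\psi(0,\cdot)$ remains compactly supported on $(0,\infty)$.

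Next, I would represent $\psi$ via Duhamel's formula using the half-line Dirichlet heat kernel $G_D(t,y,y')=\tfrac{e^{-(y^2+y'^2)/(4t)}}{\sqrt{\pi t}}\sinh(yy'/(2t))$, i.e.
\[
\psi(t,y) = \psi^{\mathrm{lin}}(t,y) + \int_0^t\!\!\int_0^\infty G_D(t-s,y,y')\,V(s,y')\,\psi(s,y')\,dy'\,ds,
\]
where $\psi^{\mathrm{lin}}(t,y) := \int_0^\infty G_D(t,y,y')\psi(0,y')\,dy'$. Expanding $\sinh(yy'/(2t)) = \tfrac{yy'}{2t} + O((yy'/t)^3)$ and using the compact support of $\psi(0,\cdot)$ yields
\[
\psi^{\mathrm{lin}}(t,y) = \frac{C^* y}{(t+t_0)^{3/2}}\,e^{-y^2/(4(t+t_0))}\bigl[\,1 + O(\sqrt{t_0/(t+t_0)})\,\bigr]
\]
uniformly in $y \geq 0$ for $t \geq t_1$ large, after absorbing the difference between $t^{-3/2}e^{-y^2/(4t)}$ and $(t+t_0)^{-3/2}e^{-y^2/(4(t+t_0))}$ into the error term.

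For the Duhamel correction one first establishes the a priori supersolution bound $\psi(s,y') \leq C\,y'(s+t_0)^{-3/2}e^{-y'^2/(4(s+t_0))}$ by a direct barrier argument; then combining with $|V(s,y')| \leq \tfrac{|\eta|\,y'}{2(s+t_0)^2}$ and standard Gaussian convolution estimates, using the identity $\int_0^t (s+t_0)^{-2}\,ds = \tfrac{1}{t_0} - \tfrac{1}{t+t_0}$, bounds the correction by $C\sqrt{t_0/(t+t_0)}\cdot \tfrac{y}{(t+t_0)^{3/2}}e^{-y^2/(4(t+t_0))}$. Reverting via $\phi=Be^{ay}\psi$ and collecting constants produces \eqref{hatphi-formula} with $h(t,y)$ satisfying \eqref{e.a.1a}; the two-sided bound \eqref{e.a.2} follows immediately since $e^{-y^2/(4(t+t_0))}$ stays in a fixed positive interval on $0 \leq y \leq \sqrt{1+t}$ while $|h| < 1/2$ for $t \geq t_1$, and \eqref{e.a.3} follows by maximizing $y e^{-\beta y/2 - y^2/(4(t+t_0))}$ in $y$, which attains its maximum at some $y^\ast = O(1)$ with maximum value of order $1$. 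The \textbf{main obstacle} is obtaining the sharp $\sqrt{t_0/(t+t_0)}$ rate for the Duhamel correction, which requires bootstrapping a tight a priori estimate on $\psi$ and carefully tracking Gaussian convolutions against the smallness of $V$.
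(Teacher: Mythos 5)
Your proposal takes a genuinely different route from the paper. The paper passes to self-similar variables $\tau=\ln((t+t_0)/t_0)$, $z=y/\sqrt{t+t_0}$, which turns the problem into a perturbed Fokker--Planck/harmonic-oscillator equation, and then leans heavily on the spectral decomposition and exponential decay estimates from \cite[Lemma 2.2]{hamel2013short}; the only new work in the paper is upgrading a compact-in-$z$ estimate to a uniform one via a local parabolic estimate. Your gauge transformation $\phi = B(t)e^{a(t)y}\psi$ reducing to a Dirichlet heat equation on the half-line with a small linear potential $V(t,y)=\eta y/(2(t+t_0)^2)$, followed by Duhamel against the explicit Dirichlet kernel, is more self-contained and stays in the original $(t,y)$ variables. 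The gauge computation and the form of $B(t)$ are correct, and the heuristics for the leading term $\psi^{\mathrm{lin}}$ and for \eqref{e.a.2}--\eqref{e.a.3} check out.

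However, the crucial step -- the a priori bound $\psi(s,y')\le C\,y'(s+t_0)^{-3/2}e^{-y'^2/(4(s+t_0))}$ obtained ``by a direct barrier argument'' -- is not established and is where the proposal breaks. Take the natural barrier $\Psi(t,y)=A(t)g(t,y)$ with $g(t,y)=(t+t_0)^{-3/2}ye^{-y^2/(4(t+t_0))}$ (so $g_t=g_{yy}$). Then $\Psi_t-\Psi_{yy}-V\Psi = g\,(A'-V A)$, and the supersolution inequality requires $A'/A\ge \eta y/(2(t+t_0)^2)$ for all $y>0$, which is impossible when $\eta>0$ because $V$ grows without bound in $y$. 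Relaxing the Gaussian to $e^{-\mu y^2/(4(t+t_0))}$ with $\mu<1$ repairs the sign by producing a good term $\mu(1-\mu)y^2/(4(t+t_0)^2)$, but the resulting ODE for $A$ forces $A(t)\gtrsim((t+t_0)/t_0)^{3(1-\mu)/2}$, so the barrier decays only like $(t+t_0)^{-3\mu/2}$ rather than the sharp $(t+t_0)^{-3/2}$; feeding this back through Duhamel then also degrades the Gaussian exponent (one loses a constant factor in $y^2/(4(t+t_0))$ when estimating $\int G_D(t-s,y,y')\,y'^2e^{-\mu y'^2/(4(s+t_0))}\,dy'$). So the bootstrap does not close at the sharp rate. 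The paper's self-similar change of variables is precisely what compactifies the unbounded-in-$y$ nature of $V$ (it becomes an $O(e^{-\tau/2})$ perturbation of the Hermite operator $M$ on a fixed weighted $L^2$ space), and the spectral gap of $M$ delivers the clean $e^{-\tau/2}=\sqrt{t_0/(t+t_0)}$ rate. For $\eta\le 0$ your barrier argument does go through directly (then $V\le 0$ helps), so the gap is specifically the case $\eta>0$. Unless you can produce a sharper barrier or replace the barrier step by an energy/weighted-$L^2$ argument at scale $y\sim\sqrt{t+t_0}$ (essentially recreating the self-similar structure), the proposal as stated has a real hole.
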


\subsection{Upper and lower estimates}\label{subsec4-2}

{In this subsection, we will estimate the solution 
$u(t,x)$ of \eqref{main-eq}.
We begin by establishing a lower estimate for $u(t,X(t))$.
}

\begin{lemma}\label{l.a.0423.1}
Let $X(t) = \beta t - {\eta} \log (t+1)$ for some 
\begin{equation}\label{e.816.5}
(\beta,\eta) \in (2,\infty) \times \mathbb R \quad \text{ or }\quad (\beta,\eta) \in \{2\} \times (-\infty, \tfrac12) .
\end{equation}
{Let $u(t,x)$ be any solution of \eqref{main-eq} with initial data satisfying \eqref{e.bcc}.}
Then there exist $\delta_1>0$ and $T'>0$ such that 
\begin{equation}\label{e.0816.6}
u(t,X(t)+y) \geq \delta_1  (1+t)^{-\tfrac32 + \tfrac{\beta\eta}2} e^{-\tfrac{\beta}{2}y-(\tfrac{\beta^2}4 -1)t} \quad \text{for }t\geq T', ~ 0 \leq y \leq \sqrt{t}.
\end{equation}
\end{lemma}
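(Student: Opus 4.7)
The plan is to build a sub-solution of \eqref{main-eq} in the ahead moving half-space $\{x>X(t)\}$ using the auxiliary heat-equation solution $W$ of Lemma~\ref{e.a.lem.1}, and then apply the parabolic comparison principle.

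Fix $T_0\gg 1$ and let $W(t,x)$ solve $W_t=W_{xx}$ in $\{t\geq T_0,\ x>X(t)\}$ with zero Dirichlet data $W(t,X(t))=0$ and a compactly supported, non-trivial initial datum $W(T_0,\cdot)\geq 0$. For small $\kappa>0$ and a positive function $\sigma(t)$ with $\int_{T_0}^\infty \sigma(s)\,\mathrm ds<\infty$, I would set
\[
\underline u(t,x):=\kappa\,\exp\!\Big((t-T_0)-\int_{T_0}^t\sigma(s)\,\mathrm ds\Big)\,W(t,x),\qquad t\geq T_0,\ x\geq X(t).
\]
A direct computation using $W_t=W_{xx}$ yields $\underline u_t=(1-\sigma(t))\,\underline u+\underline u_{xx}$, so the sub-solution condition $\underline u_t\leq \underline u_{xx}+\underline u(1-\underline u)$ reduces to the pointwise bound $\underline u(t,\cdot)\leq\sigma(t)$. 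By the sup estimate \eqref{e.a.3} of Lemma~\ref{e.a.lem.1}, $\underline u(t,\cdot)\lesssim \kappa\, t^{-3/2+\beta\eta/2}\,e^{-(\beta^2/4-1)t}$, which is summable in $t$ under either hypothesis: exponentially decaying when $\beta>2$, and polynomial with exponent $-3/2+\eta<-1$ when $\beta=2$ and $\eta<1/2$. Choosing $\sigma(t)$ to dominate this upper bound (which is possible precisely by summability) guarantees the sub-solution inequality for $t\geq T'$ after possibly enlarging $T_0$.

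Next, by the strong maximum principle, $u(T_0,\cdot)>0$ on the compact support of $W(T_0,\cdot)$; choose $\kappa$ small enough that $\underline u(T_0,\cdot)\leq u(T_0,\cdot)$. Together with $\underline u(t,X(t))=0\leq u(t,X(t))$ and $\underline u\to 0$ as $x\to+\infty$, the parabolic comparison principle gives $u\geq \underline u$ in the ahead region. Applying the pointwise lower estimate \eqref{e.a.2} of Lemma~\ref{e.a.lem.1} at $y=1$ then yields, for all large $t$,
\[
u(t,X(t)+1)\ \geq\ \delta'\,(1+t)^{-3/2+\beta\eta/2}\,e^{-(\beta^2/4-1)t},
\]
where $\delta'>0$ absorbs the bounded prefactor $e^{-\int\sigma}$, the factor $e^{-\beta/2}$, and the constants from Lemma~\ref{e.a.lem.1}.

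The main obstacle is to transfer this bound from $x=X(t)+1$ to the moving interface $x=X(t)$ itself. Because $u$ is a positive classical solution of a uniformly parabolic PDE whose reaction coefficient $r(t,x)$ is bounded (but discontinuous across $X(t)$), parabolic transmission theory gives $u\in C^{1,\alpha}$ in $x$ across $X(t)$, so $u(t,X(t))$ is expected to be comparable to $u(t,X(t)+1)$ up to a multiplicative constant. Concretely, I plan to obtain this comparability via a local parabolic Harnack argument in a small cylinder centered at $(t,X(t))$, or equivalently by comparing $u$ with a steady-state barrier sub-solution in the narrow strip $[X(t),X(t)+1]$ anchored at the established lower bound at $x=X(t)+1$. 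This final transfer step, together with the careful bookkeeping of the $\sigma$-correction and of the lower-order terms appearing in \eqref{e.a.1a}, is where the proof requires the most attention.
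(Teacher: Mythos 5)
Your proposal is correct and follows essentially the same route as the paper's proof: both extend the heat-equation barrier $W$ of Lemma~\ref{e.a.lem.1} (which vanishes on the moving boundary) by an exponential factor $e^t$ together with a decaying multiplicative correction, reduce the subsolution inequality to a smallness condition on $\underline u$ that is guaranteed by \eqref{e.a.3} together with the summability provided by \eqref{e.816.5}, compare, read off the lower bound at $y=1$ via \eqref{e.a.2}, and finally transfer from $x=X(t)+1$ to $x=X(t)$ with a parabolic Harnack argument.

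The only real difference is cosmetic: you encode the correction as $\exp\bigl(-\int_{T_0}^{t}\sigma\bigr)$ with $\sigma$ chosen to dominate the $L^\infty$ bound of $\underline u$, so that the inequality $\underline u_t-\underline u_{xx}-\underline u(1-\underline u)=\underline u(\underline u-\sigma)\le 0$ holds directly, whereas the paper writes the correction as $A(t)\tilde\phi$ where $A$ solves the Riccati-type ODE $A'=-C'(1+t)^{-\kappa-1}A^2$ arising from absorbing the quadratic nonlinearity. The two devices do the same job—producing a time-dependent prefactor bounded below by a positive constant—and yours is arguably a touch cleaner, because you never need to invoke the explicit ODE solution; you only need $\int^\infty\sigma<\infty$, which is exactly where the dichotomy ``$\beta>2$'' versus ``$\beta=2,\ \eta<\tfrac12$'' is used in both arguments. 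A second minor divergence: you keep the comparison confined to the moving half-space $\{x>X(t)\}$ with the Dirichlet condition on the interface, while the paper extends $\tilde\phi$ by zero across $x=X(t)$ and works with the notion of generalized subsolutions on all of $\mathbb R$; both are legitimate, and your version slightly simplifies the gluing discussion at the interface.

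One small piece of bookkeeping you should make explicit: Lemma~\ref{e.a.lem.1} is formulated with the boundary $\widehat X_\eta(t)=\beta t-\eta\log(t+t_0)$ for $t_0\gg 1$, while the lemma you are proving uses $X(t)=\beta t-\eta\log(t+1)$. Since $X(t)-\widehat X_\eta(t)$ is uniformly bounded, this is harmless, but it needs to be absorbed either into the spatial translate of $W$ or into the constant $\delta_1$. Also, your last paragraph over-engineers the transfer step: because the reaction coefficient $r(t,x)$ is merely bounded and measurable, the standard parabolic Harnack inequality (applied over a small parabolic cylinder around $(t+1,X(t)+1)$, covering $(t+1,X(t+1))$ since $X(t+1)-X(t)$ stays bounded) is already enough; no transmission-regularity or barrier argument is needed, and indeed the paper simply invokes Harnack.
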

\begin{proof}
For $(\beta,\eta)$ as in \eqref{e.816.5}, define
\beaa
\tilde{\phi}(t,x)=
\begin{cases}
e^{t} \phi_{\beta,\eta}(t,x - {X}(t)),&\quad x>X(t),\\
0,&\quad x\leq X(t).
\end{cases}
\eeaa
where $\phi_{\beta,\eta}(t,{x - {X}(t)})$ is the function supported in $[{X(t)},\infty)$ which is given by Lemma \ref{e.a.lem.1}. 
Recalling the definition of generalized sub/supersolutions (see \cite[Definition 4.2]{Berestycki2016shape} or \cite[Definition 1.1.1]{lam2022introduction}), 
then $\tilde\phi(t,x)$ is a generalized subsolution to the linear heat equation 
\[
{\phi_t = \phi_{xx}  +(1-a\chi_{(-\infty,X(t)]})\phi}
\]
 in $\mathbb{R}_+ \times \mathbb{R}$. Moreover, by \eqref{e.816.5}, $\tilde\phi$ satisfies,
 for some $C' >0$ and {$\kappa>0$},
\begin{equation}\label{e.a.423.12}
\sup_{x\in\mathbb{R}}\tilde\phi(t,x) \leq   
C' (1+t)^{-\kappa-1} \quad \text{ for }t\geq 0,
\end{equation}
thanks to \eqref{e.a.3}. 
Now, take $A_0 \in (0,1)$ small enough such that
\bea\label{ic-lem4.2}
u(1,x) \geq A_0\tilde\phi(0,x) \quad \text{ for }x \in \mathbb{R} 
\eea
which is possible since $u(1,x) >0$ for all $x$ and $\phi(0,x)$ is compactly supported. 

Next, we will compare $u(t+1,\cdot)$ with $\underline\phi:= A(t) \tilde\phi(t,x)$, where $A(t)$ 
will be specified later with an ODE. To this end,  we check the differential inequality for $\underline\phi$. Due to the definition of $\tilde{\phi}$, it suffices to verify that 
\beaa
\underline\phi_t - \underline\phi_{xx} - \underline\phi(1-\underline\phi)\leq 0 \quad \text{ for } t>0,\ x\geq X(t). 
\eeaa


 Let $A(t)$ be the solution of the ODE
$$
\frac{d}{dt} A(t) = - C'(1+t)^{-\kappa-1} A(t)^2 \qquad \text{ for } {t>0},\quad \text{ and }\quad A(0) = A_0>0.
$$
Then it is easy to see that
$$
\frac{1}{A(t)} - \frac{1}{A_0} = C' \int_0^t (1+\tau)^{-\kappa-1 }\,d\tau = \frac{C'}{\kappa}\left(1- (1+t)^{-\kappa} \right)
$$
so that $A(t)$ is strictly decreasing and bounded from below by some $A_\infty>0$. Now, we verify the differential inequality for $\underline\phi(t,x):= A(t)\tilde\phi(t,x)$ as follows:
\begin{align*}
\underline\phi_t - \underline\phi_{xx} - 
{ \underline\phi(1- \underline\phi)}
&=  (A \tilde{\phi})_t- (A\tilde{\phi})_{xx} -  A \tilde{\phi} + (A\tilde{\phi})^{2}
\\
&\leq A_t \tilde \phi  +  A^2 \tilde{\phi}^2 \\
&= \tilde\phi A^2 \left[-C'(1+t)^{-\kappa-1} + \tilde\phi \right]\leq 0\end{align*}
where the second inequality is due to $\tilde\phi_t\leq  \tilde\phi_{xx} + \tilde\phi$ in the generalized sense; the following equality is due to the ODE satisfied by $A(t)$ and the last inequality due to \eqref{e.a.423.12}.

Combining  with \eqref{ic-lem4.2}, we can apply the comparison principle to conclude that
$$
u(t+1,x)\geq \underline{\phi}(t,x)\qquad \text{ for }t\geq 0,~ x\in\R.$$ In particular, thanks to \eqref{e.a.2}, there exist $\delta_0>0$ and $T_1\gg 1$ such that 
\begin{align}
\tilde{u}(t+1,X(t)+y)&\geq A(t)\tilde{\phi}(t,X(t)+y)\notag\\&\geq A_{\infty}\tilde{\phi}(t,X(t)+y)\notag \\
&\geq 
\delta_0 y t^{-\frac{3}{2}+\frac{\beta}{2}\eta} e^{-\tfrac{\beta}2 y 
-(\frac{b^2}{4}-1)t} ,\quad t\geq T_1,~ 0\leq y \leq \sqrt t.\label{ineq-tilde-u}
\end{align}
Since $X(t+1)-X(t)=\beta +o(1)$, we can apply 
the parabolic Harnack's inequality to conclude that there exists  $\delta_1>0$ such that 
\beaa 
u(t+1,X(t+1) + y)\geq  \delta_1 t^{-\frac{3}{2}+\frac{\beta}{2}\eta} e^{-\tfrac{\beta}2 y 
-(\frac{b^2}{4}-1)t} ,\quad t\geq T_1+1,~ 0\leq y \leq \sqrt t.
\eeaa
This completes the proof. 
\end{proof}

{
\begin{remark}\label{rmk.0605.1}
    Note that for $X(t) = \beta t -\eta \log (1+t)$ satisfying
\[
(\beta,\eta)  \in (-\infty, 2) \times \mathbb{R} \quad \text{ or }\quad (\beta,\eta) \in \{2\} \times [\tfrac12,\infty),
\]
we can replace the moving boundary $X(t)$ by $Y(t):=2t$ in the proof of Lemma~\ref{l.a.0423.1}, and define
\[
\tilde\phi(t,x):=
\begin{cases}
e^t\phi_{2,0}(t,x-Y(t)),
& x>Y(t),\\
0,
& x\leq Y(t).
\end{cases}
\]
Then the same argument as in the proof of
Lemma~\ref{l.a.0423.1} 
gives $T>0$ and $\delta_1>0$ such that
\[
u(t, 2 t + y) \geq \delta_1 (1+t)^{-\tfrac32} y e^{-y} \quad \text{ for }t\geq T,\ ~ 0\leq y \leq \sqrt t.
\]
\end{remark}
}

\medskip



Next, we provide an upper estimate for  $u(t,X(t))$ by gluing $w$ and $\tilde{\phi}$,
where $w$ is the solution of \eqref{e.0401.2} and $\tilde{\phi}$ is given in the proof of Lemma~\ref{l.a.0423.1}.
The upper estimate does not suffer from the restriction in parameter $\eta$ as in the lower estimate.

\begin{lemma}\label{lem:a.4.2}
Let $u$ be the solution to \eqref{main-eq} with $X(t)$ with $\beta \in [2,\infty)$ and $\eta \in \mathbb{R}$. 
\begin{itemize}
\item[{\rm(a)}] If {$\beta \in [2, 2(\sqrt{a} + \sqrt{1-a})]$,} then {there exist $C>0$ and $T>0$ such that}
\begin{equation}\label{e.816.10}
    u(t,X(t)) \leq C\left[ (x-2\lambda t)^q e^{-\lambda x +(\lambda^2 + 1-a) t} \right]_{x=X(t)}\quad {\text{ for } t\geq T,}
\end{equation}
where
\begin{equation}\label{e.816.3}
    \lambda = \frac{\beta}2 - \sqrt{a} \in  [1-\sqrt{a}, \sqrt{1-a}],\quad q = -\frac32 + \eta \sqrt a.
\end{equation}
\item[{\rm(b)}] If $\beta \in (2(\sqrt a +\sqrt{1-a},\infty)$, then {there exist $C>0$ and $T>0$ such that}
\begin{equation}\label{e.a.4.5}
    u(t,X(t)) \leq C\left[(x-2\lambda_{\min}t)^{-3}  e^{-\lambda_{\min} x +(\lambda_{\min}^2 + 1-a) t} \right]_{x=X(t)} \quad {\text{ for } t\geq T,}
\end{equation}
where $\lambda_{\min} =  \sqrt{1-a}$.
\end{itemize}
    
\end{lemma}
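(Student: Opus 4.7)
I construct a global supersolution $\bar u$ of \eqref{main-eq} by gluing, in the spatial variable, a solution $w$ of the homogeneous $(1-a)$-KPP equation \eqref{e.0401.2} (governing the region behind $X(t)$) with a multiple of $\tilde\phi$ from Lemma~\ref{e.a.lem.1} (governing the region ahead of $X(t)$). For part (a), the choice $\lambda = \beta/2 - \sqrt a$ and $q = -3/2 + \eta\sqrt a$ is made so that $w$'s tail matches the target decay: a direct computation shows that under the assumption $\beta \in [2, 2(\sqrt a + \sqrt{1-a})]$, the speed $c_\lambda := \lambda + (1-a)/\lambda$ satisfies $c_\lambda < \beta$. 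Hence $X(t) > (c_\lambda + k)t$ for large $t$ and some $k>0$, so Proposition~\ref{lem:w-estimate} yields the two-sided estimate $w(t, X(t)) \asymp [(x-2\lambda t)^q e^{-\lambda x + (\lambda^2 + 1-a)t}]_{x=X(t)}$, which matches the right-hand side of \eqref{e.816.10}.

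\textbf{Gluing and comparison.} Let $w_0$ satisfy \textbf{(W0)} with the above $\lambda, q$, chosen so that $w_0 \geq \|u_0\|_{L^\infty}$ on a sufficiently large left tail to dominate $u_0$. Define $\bar u(t,x) := A w(t, x+\ell) + K\tilde\phi(t, x)$, where $\tilde\phi$ (extended by zero for $x \leq X(t)$) is the function from the proof of Lemma~\ref{l.a.0423.1} and $A, K, \ell > 0$ are constants to be chosen. On $\{x \leq X(t)\}$, $\tilde\phi$ vanishes and $Aw(t, x+\ell)$ is a supersolution to $u_t = u_{xx} + u(1-a-u)$ for any $A \geq 1$ (by the standard scaling argument $A^2 w^2 \geq A w^2$). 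On $\{x > X(t)\}$, using $\tilde\phi_t = \tilde\phi_{xx} + \tilde\phi$ and $w_t = w_{xx} + w(1-a-w)$, a direct computation gives $\bar u_t - \bar u_{xx} - \bar u(1-\bar u) = -aAw + A(A-1)w^2 + K\tilde\phi(K\tilde\phi + 2Aw)$, which is made non-negative by taking $A, K$ large and $\ell$ chosen appropriately, using the fine size estimates of $\tilde\phi$ from Lemma~\ref{e.a.lem.1}. Once $\bar u$ is a supersolution and $\bar u(0, \cdot) \geq u_0$, the parabolic comparison principle yields $u \leq \bar u$ on $(0,\infty)\times \mathbb R$. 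Evaluating at $x = X(t)$, where $\tilde\phi(t, X(t)) = 0$, gives $u(t, X(t)) \leq Aw(t, X(t)+\ell)$, and applying Proposition~\ref{lem:w-estimate} then yields \eqref{e.816.10}.

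\textbf{Main obstacle, and part (b).} The delicate step is the supersolution verification on $\{x > X(t)\}$, where the jump of $r$ from $1-a$ to $1$ produces a negative term $-aAw$ that must be absorbed by the positive contribution $K\tilde\phi(K\tilde\phi + 2Aw)$. Since $\tilde\phi$ vanishes at $X(t)$, this control is subtle in a neighborhood of the interface: one must use the linear growth bound on $\tilde\phi$ near $X(t)$ from \eqref{e.a.2} together with the exponential decay of $w$ across the interface from Proposition~\ref{lem:w-estimate}, balancing the roles of $A$, $K$, and $\ell$ (and if needed replacing the constant $K$ by a time-decaying factor arising from an auxiliary ODE as in the proof of Lemma~\ref{l.a.0423.1}). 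Away from the interface, $w$ decays exponentially and the negative term is negligible. Part (b), corresponding to $\beta > 2(\sqrt a + \sqrt{1-a})$, is proved by the same scheme but with $\lambda = \lambda_{\min} = \sqrt{1-a}$ and $q = -3$, using the critical Bramson asymptotics from Theorem~\ref{lem:bramson-no-pull} to produce the $(x-2\lambda_{\min}t)^{-3}$ prefactor in \eqref{e.a.4.5}.
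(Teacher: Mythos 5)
There is a genuine gap in your proposal. Your glued function $\bar u := A w(t,x+\ell) + K\tilde\phi(t,x)$ \emph{cannot} be made into a supersolution of \eqref{main-eq}, for two reasons. First, the function $\tilde\phi$ extended by zero for $x\leq X(t)$ has a \emph{convex} kink at $x=X(t)$ (left derivative $0$, right derivative $>0$), which is why the paper invokes it in Lemma~\ref{l.a.0423.1} as a generalized \emph{sub}solution; adding a nonnegative $Aw$ does not change the sign of the jump in spatial derivatives, so $\bar u$ inherits a convex kink and fails the gluing criterion for generalized supersolutions. Second, even ignoring the kink, your own differential inequality on $\{x>X(t)\}$,
\[
\bar u_t - \bar u_{xx} - \bar u(1-\bar u) \;=\; -aAw + A(A-1)w^2 + 2AKw\tilde\phi + K^2\tilde\phi^2,
\]
cannot be made nonnegative near the interface: as $x\searrow X(t)$, $\tilde\phi(t,x)\to 0$, so the expression tends to $-aAw + A(A-1)w^2$, which is negative for large $t$ since $w(t,X(t)+\ell)\to 0$ and hence $w^2\ll w$. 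Absorbing $-aAw$ would require $2K\tilde\phi(t,x)\geq a$ uniformly near $x=X(t)$, but $\tilde\phi$ vanishes on the boundary and moreover decays to zero in time; no choice of constants $A,K,\ell$, nor a time-dependent $K(t)$, fixes this. (The ODE trick from Lemma~\ref{l.a.0423.1} works in the subsolution direction precisely because there the $\tilde\phi^2$ term carries the favorable sign.)

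The paper resolves both problems at once by replacing the \emph{sum} with a \emph{minimum} of two supersolutions and by \emph{shifting} $\tilde\phi$ in space. Concretely, it sets $\overline{u}_1 = \min\{\psi(t,x),\, e^{(\beta-\sqrt{a})M/2}\,\tilde\phi(t,x+M)\}$ in a transition band $X(t)-M+1\le x<X(t)$, with $\psi$ alone to the left and the shifted $\tilde\phi$ alone to the right. The spatial shift by $M$ is crucial: the kink of $\tilde\phi$ is at $x=X(t)-M$, strictly outside the gluing region, so only the smooth branch of $\tilde\phi(\cdot,\cdot+M)$ is used; and since $\tilde\phi(t,\cdot+M)$ solves $\phi_t=\phi_{xx}+\phi$ exactly where it is invoked, it is a classical supersolution of \eqref{main-eq} with no $-a$ term to absorb. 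The inequalities \eqref{e.c.1}--\eqref{e.c.2} then verify that the pointwise minimum is achieved by the smooth branch at both edges of the band, so the gluing criterion for generalized supersolutions holds. (Also note the paper uses the linear $\psi$, not the nonlinear $w$, which is cleaner since $\psi\geq w$ is automatic and the needed asymptotics come directly from Lemma~\ref{lem:Bramson-0813}.) For part (b), the paper does \emph{not} repeat this scheme; since $\beta>2(\sqrt a+\sqrt{1-a})$, a single global exponential $\bar u_3(t,x) = M_3 e^{-\beta x/2 + (\beta^2/4+1)t}$ already dominates $u$ and matches \eqref{e.a.4.5} up to an exponentially small error, which is far simpler than the critical Bramson machinery you suggest.
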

\begin{proof}
Let $X(t) = X_{\beta,\eta}(t)$ be the moving boundary with $\beta \geq 2$ and $\eta \in \mathbb{R}$. 
We first assume $\beta/2 \in [1,\sqrt a + \sqrt{1-a}]$ and prove part (a). 

To this end, set $R=1-a>0$, and $\lambda$, $q$ as in \eqref{e.816.3}. 
Let $\psi(t,x) = \psi_{R,\lambda,q}(t,x)$ be given by the solution to \eqref{e.heat} on the real line and with
 and initial condition \eqref{w-ic-q}. Then by our choice of parameters,  we always have $\beta > 2\lambda$.
This allows the application of
Lemma \ref{lem:Bramson-0813} (i) and (ii) to deduce 
\beaa
\frac{1}{c_1} t^q e^{-\lambda(X(t) - c_\lambda t)} \leq  \psi(t,X(t)) \leq  c_1 t^q e^{-\lambda(X(t) - c_\lambda t)}\quad \text{ for all } t\gg1.
\eeaa
for some $c_1>0$. 
Furthermore, using the  identity  
$\lambda(\beta - c_\lambda) = \tfrac{\beta^2}{4} -1$, we have
\begin{equation}\label{e.a.0423.4.5}
\frac{1}{c_1}t^{-\frac{3}2 + \frac{\beta\eta}2}e^{ - (\frac{\beta^2}{4}-1) t} \leq \psi(t,X(t)) \leq c_1 t^{-\frac{3}2 + \frac{\beta\eta}2 }e^{- (\frac{\beta^2}{4}-1 )t } \quad \text{ for all }t \gg 1,
\end{equation}
Also, 
by Remark \ref{rmk:0423.1b}, 
\begin{equation}\label{e.a.0423.4.6}
\lim_{t\to\infty}    \frac{\psi(t, X(t))}{\psi(t,  X(t)-M +1)} = e^{-\lambda (M-1)} \quad \text{ for each }M \in \mathbb{R}.
\end{equation}
Next, let $\tilde\phi(t,x) = e^{Rt} \phi_{\beta,\eta}(t,x-X(t))$ be given as in the proof of Lemma \ref{l.a.0423.1}. Then by {\eqref{e.a.2}}, there exists $C_1>1$ such that  
\begin{equation*}
\frac{1}{C_1} t^{-\frac{3}{2} + \frac{\beta\eta}2} e^{-(\frac{\beta^2}{4}-1)t}\leq  
\frac{\tilde\phi(t,y + X(t))}{y e^{-\frac{\beta}{2}  y - \frac{y^2}{4t}}} \leq C_1 t^{-\frac{3}{2} + \frac{\beta\eta}2} e^{-(\frac{\beta^2}{4}-1)t}
\end{equation*}
for $0 \leq y \leq \sqrt{1+t}, ~ t \gg 1.$ 
By  \eqref{e.a.0423.4.5} and \eqref{e.a.0423.4.6}, it follows that {for some $C_2>0$,}
\begin{equation}\label{e.0423.7c}
\frac{e^{-\lambda(M-1)}}{C_2}{\psi}(t,X(t)-M+1) <  
\frac{\tilde\phi(t,y + X(t))}{y e^{-\frac{\beta}{2} y - \frac{y^2}{4t}}} < C_2 {\psi}(t,X(t)) 
\end{equation}
for $0 \leq y \leq \sqrt{1+t}, ~ t \gg 1.$ 
Hence, setting $y=M$, we have 
\begin{equation}\label{e.a.0423.7}
\frac{\tilde\phi(t,M + X(t))}{e^{-(\beta -\sqrt a) M/2}} < C_2  M e^{-\sqrt{a} M/2} {\psi}(t,X(t)) \qquad \text{ for } t \gg 1.
\end{equation}
{Note that the constant $C_2$ is {the same as in \eqref{e.0423.7c}}, and is independent of $M$.} So we can choose $M \gg 1$ such that
\begin{equation}\label{e.MM}
  C_2 M e^{-\sqrt{a} M} <1 \quad \text{ and } \quad   \frac{e^{-\beta/2}}{e^{-(\beta - \sqrt{a})M/2}} \cdot \frac{e^{-\lambda (M-1)}}{C_2} =\frac{e^{-\beta/2+\lambda }}{C_2}  e^{\sqrt{a} M/2}  >2 
\end{equation}

We claim that with the above choice of $M$, there exists $t_1 >1$ such that 
\begin{equation}\label{e.c.1}
    \frac{\tilde\phi(t,M + X(t))}{e^{-(\beta -\sqrt a) M/2}} < {\psi}(t,X(t)) \quad \text{ for }t \geq t_1,
\end{equation}
and
\begin{equation}\label{e.c.2}
    \frac{\tilde\phi(t,1 + X(t))}{e^{-(\beta -\sqrt a) M/2}} > {\psi}(t,X(t)- M+1) \quad \text{ for }t \geq t_1. 
\end{equation}
Indeed, \eqref{e.c.1} follows from \eqref{e.a.0423.7} and \eqref{e.MM}. To see \eqref{e.c.2}, set $y=1$ in \eqref{e.0423.7c} to get
\begin{align}
\frac{\tilde\phi(t,1 + X(t))}{e^{-(\beta - \sqrt{a})M/2}}  &> \frac{e^{-\beta/2}}{e^{-(\beta - \sqrt{a})M/2}} \frac{e^{-\lambda(M-1)}}{C_2}{\psi}(t,X(t)-M+1)\notag\\
&>2 {\psi}(t,X(t)-M+1)\quad {\text{ for all } t\gg1},  \label{e.c.3}
\end{align}
by the second part of \eqref{e.MM}.
This proves \eqref{e.c.2} provided we choose $t_1$ large enough.

We now construct the supersolution by gluing $w$  and a constant multiple of $\tilde\phi(t,x+M)$. 
\begin{equation}
    \overline{u}_1(t,x) := \begin{cases}
    {\psi}(t,x) &\text{ for } x < X(t)-M+1,\\
    \min\left\{\psi(t,x),  e^{(\beta - \sqrt{a})M/2}{\tilde\phi(t,x+ M)}\right\} &\text{ for }X(t)-M+1 \leq  x < X(t),\\
    e^{(\beta - \sqrt{a})M/2}{\tilde\phi(t,x+ M)} &\text{ for }x \geq X(t).
    \end{cases}
\end{equation}
We claim that 
the function $\overline{u}_1$ is a generalized supersolution of \eqref{main-eq} in the domain $[t_1,\infty)\times \mathbb R$. Indeed, since ${\psi}(t,x)$ is a classical supersolution for $x < X(t)$, and the same is true for $\overline\phi(t,x):= 
{e^{(\beta - \sqrt{a})M/2}}\tilde\phi(t,x+M)$ 
for $x> X(t) - M$.
Note that  \eqref{e.c.1} and \eqref{e.c.2} together imply that, in a neighhorbood of $x=X(t)$ and $x=X(t)-M+1$, $\overline{u}_1$ is twice differentiable and is a classical supersolution, thus $\overline{u}_1$ satisfies the gluing criterion for generalized supersolution \cite[Remark 1.1.2]{lam2022introduction}. 

Next, choose $M_1>1$ such that $X(t + t_1) \leq X(t) + M_1$ for all $t\geq 0$, and define
$$
\bar{u}_2(t,x) = \bar{u}_1(t+t_1,x+M_1) \quad \text{ for }t\geq 0,~x\in\mathbb R.
$$
We claim that $\bar{u}_2$ is a generalized supersolution of \eqref{main-eq} in the entire domain $[0,\infty)\times \mathbb{R}$. Indeed, write $r(t,x) = 1-a\chi_{\{x \leq X(t)\}}$, then the following holds in the generalized sense: 
\begin{align*}
(\bar{u}_2)_t - (\bar{u}_2)_{xx} &= \bar{u}_2 \left(r(t+t_1, x + M_1) - \bar{u}_2 \right)\\
 &=    \bar{u}_2 \left(1- a\chi_{\{x+ M_1 \leq X(t+ t_1)\}} - \bar{u}_2 \right)\\
  &\geq    \bar{u}_2 \left(1- a\chi_{\{x \leq X(t)\}} - \bar{u}_2 \right)
\end{align*}
where the first equality is due to the supersolution property of $\bar{u}_1$, and the inequality is due to the choice of $M_1$ which implies
$$
\{(t,x):~x + M_1 \leq X(t+t_1)\}  \subseteq \{(t,x):~x \leq X(t)\}.
$$

Now, since initial data $u_0(x)$ has compact support, we may assume without loss that 
$$
u_0(x) \leq \bar{u}_2(0,x) \quad \text{ in }\mathbb{R},
$$
since $K\bar{u}_2$ remains a generalized supersolution of \eqref{main-eq} for each constant $K>1$. It then follows from the comparison principle for generalized super/subsolutions that
$$
u(t,x) \leq \bar{u}_2(t,x) \quad \text{ for }t>0,~x\in \mathbb R.
$$
Particularly,  we have ($C$ may change from line to line)
\begin{align*}
u(t+1,X(t+t_1+1) - M_1) &\leq \bar{u}_2(t+1, X(t+t_1+1) - M_1)\\
&= \bar{u}_1(t+t_1+1, X(t+t_1+1))\\
&\leq {\psi}(t+t_1+1, X(t+t_1+1))\\
&\leq C {t^{q+\lambda \eta }} e^{(-\beta \lambda  + \lambda^2 + 1-a)t }\\  
&\leq 
C\left[(x-2\lambda t)^q e^{-\lambda x + (\lambda^2 + 1-a)t}\right]_{x=X(t)}
\end{align*}
for $t\gg 1$,  
where we used 
\eqref{e.a.0423.4.5}
in the 
{third} inequality. 
%
Finally, since 
\[
|X(t) - X(t+t_1+1)| \to(t_1 +1) \beta \qquad {\text{ as } t\to\infty}.
\]
Since it remains bounded, it follows by Harnack's inequality that
$$
u(t,X(t)) \leq C u(t+1,X(t+t_1+1)-M_1) \leq C\left[(x-2\lambda t)^q e^{-\lambda x + (\lambda^2 + 1-a)t}\right]_{x=X(t)}
$$
for $t \gg 1$. This proves assertion (a).

For assertion (b), let $\beta > 2(\sqrt{a} + \sqrt{1-a})$ be given. 
Define 
$$
\bar{u}_3(t,x) = M_3e^{-\frac{\beta}2 x + (\frac{\beta^2}4 + 1)t},
$$
where $M_3$ is chosen large enough so that $\bar{u}_3(0,x) \geq u(0,x)$, then it is easy to see that $\bar{u}_3(t,x)$ is a classical supersolution of \eqref{main-eq}, and that by the comparison principle, $u \leq \bar{u}_3$ for all $t>0$, $x\in \mathbb{R}$. In particular,
\begin{equation}\label{e.c.8a}
u(t,X(t)) \leq \left[M_2 e^{-\frac{\beta}2x+ (\frac{\beta^2}{4} +1)t}\right]_{x=X(t)} \qquad \text{ for }t\geq 0. 
\end{equation}
Observe now for $t\gg 1$,
\begin{align}
\left[ \frac{e^{-\frac{\beta}2x+ (\frac{\beta^2}{4} +1)t} }{(x-2\sqrt{1-a}t)^{-3}e^{-\sqrt{1-a}x + 2(1-a)t}} 
 \right]_{x=X(t)} 
 &\leq C t^3
\left[ {e^{-(\frac{\beta}2-\sqrt{1-a})x+ (\frac{\beta^2}{4} +2a-1)t} }
 \right]_{x=X(t)} \noindent \notag \\
 &\leq  t^\gamma e^{-[ (\tfrac{\beta}2  - \sqrt{1-a})^2 - a] t} \quad \text{ for some }\gamma \in \mathbb R.
 \label{e.c.8b}
\end{align}
Using $\beta/2 > \sqrt{a} + \sqrt{1-a}$, we see that the last expression in \eqref{e.c.8b} tends to zero exponentially as $t\to\infty$.
%
It follows from \eqref{e.c.8a} and \eqref{e.c.8b} that \eqref{e.a.4.5} holds. 
\end{proof}
\begin{remark}\label{rk:818.1}
Under the hypothesis of Lemma \ref{lem:a.4.2}(a) (i.e. in the case $\lambda \in (0,{\sqrt{1-a}})$), we have
$$
u(t,x) \leq \overline{u}_2(t,x) \leq \psi(t+t_1,x+M_1)  \quad \text{ for }t>0,~x \leq X(t).
$$
\end{remark}

\subsection{Proof of Theorems \ref{thm:standard}, 
\ref{thm:pulling},  \ref{thm:cpulling}, and \ref{thm:no-pull}}

\begin{proof}[Proof of Theorem \ref{thm:pulling} for the case $\beta \in (2,2\sqrt{a} + 2\sqrt{1-a})$]

Let $\lambda=\frac{\beta}{2}-\sqrt{a}$ and $c_{\lambda}=\lambda+(1-a)/\lambda$. Then 
\begin{align}\label{ID-1}
\lambda \beta-\lambda^2-(1-a) = (\frac{\beta}{2}-\sqrt a) \beta- (\frac{\beta}{2}-\sqrt a)^2 -(1-a)=
\frac{\beta^2}{4}-1.
\end{align}
In view of Lemma~\ref{l.a.0423.1} and \eqref{ID-1}, there exists $\delta_1>0$ and $T>0$ such that for all $t\geq T$,
\begin{equation}\label{thm1.5-lowerbd}
u(t,X(t)) \geq \delta_1(1+t)^{-\tfrac32 + \tfrac{\beta\eta}2} e^{-(\tfrac{\beta^2}4 -1)t}=\delta_1(1+t)^{-\tfrac32 + \tfrac{\beta\eta}2} e^{(-\lambda \beta+\lambda^2+1-a)t}
\end{equation}
Therefore, by \eqref{thm1.5-lowerbd}, we have 
\begin{equation}\label{e.816.11}
\liminf_{t \to +\infty} \left[(x - 2\lambda t)^{-q} e^{\lambda x - (\lambda^2 +1-a)t} u(t,x)\right]_{x=X(t)} \geq  k_0 \qquad \text{ for some }k_0>0,
\end{equation}
where
\beaa
q=-\frac{3}{2}+(\frac{\beta}{2}-\lambda)\eta=-\frac{3}{2}+\sqrt{a}\eta.
\eeaa
Setting $R=1-a$, $\lambda \in (0,R)$, we may apply Lemma~\ref{lem:liminf-case}(i) and Remark~\ref{rk:3.2} to get
\begin{equation}\label{lower-est-thm1.5}
\liminf_{t\to\infty }\Big[\inf_{ -\infty< x \leq X_{\eta}(t)}\Big(u(t,x )  -\Phi_{\lambda,1-a}(x- m_{\lambda,q}(t)-\frac{1}{\lambda}\log k_0)\Big)\Big] \geq  0,
\end{equation}
where $m_{\lambda,q}$ is defined in \eqref{m^w}.

Next, 
we combine  
\eqref{e.816.10} of
Lemma~\ref{lem:a.4.2},  Lemma~\ref{lem:limsup-case}(i) and Remark~\ref{rk:3.1} to get
\begin{equation}\label{upper-est-thm1.5}
\limsup_{t\to\infty }\Big[\sup_{ -\infty<x \leq X_{\eta}(t)}\Big(u(t,x )  -\Phi_{\lambda,1-a}(x- m_{\lambda,q}(t)-h_0)\Big)\Big] \leq  0 \quad \text{ for some }h_0 \in \R.
\end{equation}

Combining \eqref{lower-est-thm1.5} and \eqref{upper-est-thm1.5}, we obtain \eqref{level-set}. Moreover, {similar to the proof of {Proposition~\ref{thm1}},}
we can apply the Liouville type result \cite[Theorem 3.5]{beresycki2007generalized} to conclude \eqref{thm1.5-cov}. This completes the proof.
\end{proof}

\begin{proof}[Proof of Theorem \ref{thm:pulling} for the case $\beta=2$ and $\eta\in(-\infty,1/2)$] 
{Note that the proof of Theorem~\ref{thm:pulling} for $\beta\in(2,2(\sqrt{a}+\sqrt{1-a}))$ relies on 
$\zeta^{-1}(x):=X(t)= \beta t - \eta \log(1+t)$ satisfying \eqref{zeta-cond}. However, this condition does not hold for $\beta=2$ and thus a different proof is needed.}

{{Let $X(t) = \beta t - \eta \log(1+t)$ where  $\beta=2$ and $\eta < \tfrac12$. Then set
\beaa
\lambda = 1-\sqrt{a} \quad \text{ and }\quad
q=-\frac{3}{2}+\sqrt{a}\eta.
\eeaa
Let $\xi_{b}$ be defined in \eqref{location-u}. 
We first deal with the upper bound of $\xi_b(t)$. By Remark \ref{rk:818.1}, there exists  constants  $t_1,M_1$ such that 
$$
u(t,x) \leq \psi(x + M_1, t+ t_1)\quad \text{ for all }t \geq 0,~x \leq X(t).
$$
Fix any {$\delta\in(0,\sqrt{a})$} and then $\ep>0$ small enough so that
$$
[2\lambda + \delta, \infty) \supset [2-\delta, 2+\delta]\quad \text{ and }\quad \ep \delta < \frac{\delta^2}{19}.
$$
It follows from the estimate \eqref{e.0813.1a},that  
\begin{align}
u(t,x) &\leq C  \left[t^q e^{-\lambda(x-2t)} + e^{-(\lambda-\ep)(x-2t) - \frac{\delta^2}{19}t} \right]\notag \\   
&\leq  C  t^q e^{-\lambda(x-2t)} (1- e^{-\delta' t}) \notag \\
&\leq C e^{-\lambda( x - 2t -\frac{q}{\lambda} \log t)}(1- e^{-\delta' t})
\label{e.819.1}
\end{align}
for $t\gg 1$ and ${(2-\delta)t<} x < X(t) = 2t - \eta\log(1+t)$. This implies
\begin{equation}\label{upper-beta2}
    \xi_b(t) \leq 2t + \frac{q}{\lambda} \log t + O(1) \quad \text{ for }t\gg 1.
\end{equation}

}

It remains to show the lower bound. 
To this end, apply Lemma \ref{l.a.0423.1} to get
\begin{equation}\label{u-low0821}
    u(t,X(t)) \geq \delta_1 (1+t)^{-\tfrac32 + \eta} \quad \text{ for }t \geq 1. 
\end{equation}
for some $\delta_1>0$.
Define 
\begin{equation}
    \underline{u}(t,x):= {\kappa}\Phi_{\lambda,1-a}\left(x- 2t -\tfrac{q}{\lambda} \log t + x_0\right).
\end{equation}
where $\kappa\in(0,1)$ is an arbitrary constant and 
$\Phi_{\lambda,1-a}$
is as in the statement of {Theorem \ref{thm:pulling}}
and $x_0 >0$ is to be determined later. It is clear that $\underline{u}(t,x)$ 
satisfies 
\bea\label{Dineq0821}
\underline{u}_t-\underline{u}_{xx}-\underline{u}(1-a-\underline{u})\leq 0,\quad t>0,\  x\in\R. 
\eea

Let $z(t) = X(t) - 2t - \tfrac{q}{\lambda} \log t + x_0$. Then the condition $\eta <\tfrac12$ implies that $z(t) \to \infty$ as $t\to\infty$. 
Thanks to the asymptotic property \eqref{TW-AS}, we have
\begin{align*}
t^{\tfrac32 - \eta}e^{\lambda x_0}  \underline{u}(t,X(t))&=
\exp\Big(\lambda(X(t) - 2t  - \tfrac{q}{\lambda} \log t + x_0)\Big)\underline{u}(t,X(t))\\
&= e^{\lambda z(t)}{\kappa} \Phi_{\lambda,1-a}(z(t)) \to {\kappa} \qquad \text{ as }t\to\infty.
\end{align*}
Hence, we can choose $x_0$ large enough and $T>0$ to ensure that 
\begin{equation}\label{BC0821}
    \underline{u}(t,X(t)) \leq \delta_1 (1+t)^{-\tfrac32 + \eta } \leq u(t,X(t))\quad \text{ for } t \geq {T},
\end{equation}
where we used \eqref{u-low0821}.

On the other hand, since $\liminf_{x\to-\infty} u_0(x)>0$, we have  $\liminf_{t\to\infty}\inf_{x\leq0}u(t,x)\geq 1-a$.
Therefore, for such fixed $\kappa\in(0,1)$, there exists $T'>T$ such that
\beaa
u(t,x)>\kappa(1-a)=\kappa \Phi_{\lambda,1-a}(-\infty)>  \underline{u}(t, x)\quad \text{ for } t\geq T',\ x\leq 0,
\eeaa
where the last inequality holds since $\Phi_{\lambda,1-a}'<0$. Moreover, since $u(T',x)>0$ for all $x\in[0, X(T')]$, if necessary we may choose $x_0$ larger such that
\begin{equation}\label{ic0821}
    \underline{u}(T',x) \leq u(T',x) \quad \text{ for all } x \leq {X(T')},
\end{equation}
}

Combining \eqref{Dineq0821}, \eqref{BC0821} and \eqref{ic0821},
$\underline{u}(t,x)$ is a classical subsolution in the domain 
$$
\Omega_3:=\{(t,x):~t>{T'},~ x \leq X(t)\}.
$$
It follows by comparison that $u(t,x) \geq \underline{u}(t,x)$ in $\Omega_3$. This proves the lower bound 
\begin{equation}\label{lower-beta2}
    \xi_b(t) \geq  2t + \frac{q}{\lambda} \log t + O(1) \quad \text{ for }t\gg 1.
\end{equation}
Combining \eqref{upper-beta2} and \eqref{lower-beta2}, we obtain \eqref{level-set}. 

Next observe that
the upper bound \eqref{e.819.1} and the lower bound $u(t,x) \geq \underline u(t,x)$  hold in $\Omega_3$ and in particular in 
$$
\{(t,x):~ t \geq \bar T,~ \xi_b(t) - \bar\delta t < x < \xi_b(t) + \bar\delta t\} \quad \text{ for some } \bar T, \bar\delta >0.
$$
The assertion \eqref{thm1.5-cov}, i.e. the convergence to traveling wave profile,  
follows from the arguments in \cite{hamel2013short} together with the procedure employed in the proof of {Proposition~\ref{thm1}}. This completes the proof.
\end{proof}


\begin{proof}[Proof of Theorem \ref{thm:cpulling}]
Let 
$\beta = 2(\sqrt{a} + \sqrt{1-a})$, $\lambda = \sqrt{1-a}$, then $\beta > 2\lambda$ and we can repeat the proof of Theorem~\ref{thm:pulling}, except to replace $m_{\lambda,q}(t)$ by $\tilde{m}_q(t)$ in \eqref{lower-est-thm1.5} and \eqref{upper-est-thm1.5}. We omit the detailed proof here.
\end{proof}

\begin{proof}[Proof of Theorem~\ref{thm:no-pull}] 
Let $\xi_{b}$ be defined in \eqref{location-u}. 
Since we have $u_t-u_{xx}-(1-a-u)u\geq 0$ for $x\in \R$ and $t>0$.
By comparison with the solution of 
$u_t-u_{xx}-(1-a-u)u=0$ with the same initial data as in \eqref{main-eq}, from \cite[Theorem 1.2]{hamel2013short} we see that there exist $T_1>0$ and $C_1>0$ such that for any ${b} \in (0,1-a)$, 
\[
\xi_{b} (t)\geq  2\sqrt{1-a} t-\frac{3}{2\sqrt{1-a}}\log t+C_1,\quad t\geq T_1.
\]
and
\begin{equation}\label{lower-bd-1.5}
\liminf_{t\to\infty} \inf_{x \in \mathbb R}
\left[u(t,x) - \Phi\Big(x- 2\sqrt{1-a} t  + \frac{3}{2\sqrt{1-a}} \log t + C_1\Big)\right] \geq 0.
\end{equation}

We next deal with the upper bound of $\xi_{b} (t)$.
By Lemma \ref{lem:a.4.2}(b), we have
$$
u(t,X(t)) \leq C\left[ (x-2\lambda_{\min} {t})^{-3} e^{-{\lambda_{\min} X(t)}+(\lambda^2_{\min} + 1-a)t} \right] \qquad \text{ for }t\gg 1.
$$
By taking $R = 1-a$, $\lambda = \sqrt{1-a}$, {$q=-3$} and $\zeta^{-1}(t) = X(t)$,  
one can apply Lemma \ref{lem:limsup-case}(ii) {and Remark~\ref{rk:3.1}} to deduce that {for some $C_2>0$},
\begin{equation}\label{e.0607.1}
\limsup_{t\to\infty} \sup_{x \leq X(t)}\left[ u(t,x) - \Phi(x- 2\sqrt{1-a} t  + \frac{3}{2\sqrt{1-a}} \log t - C_2)\right]\leq 0
\end{equation}
and thus 
\[
    \xi_{b}(t) \leq 2\sqrt{1-a} t  - \frac{3}{2\sqrt{1-a}} \log t + C_2 \qquad \text{ for }t\gg 1.
\]
Finally, \eqref{thm1.7-conv} can be obtained in a similar manner from the estimates \eqref{lower-bd-1.5} and \eqref{e.0607.1}, as in the proof of {Proposition~\ref{thm1}}, via the Liouville type result \cite[Theorem 3.5]{beresycki2007generalized}. This completes the proof.
\end{proof}

{
Next, we prove Theorem~\ref {thm:standard}.
\begin{proof}[Proof of Theorem \ref{thm:standard}]
We only need to prove the lower estimate, as the upper estimate directly follows from the classical result of Bramson since any solution to \eqref{main-eq} is a subsolution of the homogeneous KPP equation.

By Remark \ref{rmk.0605.1}, 
there exist $T_1>0$ and $\delta_1>0$ such that
\begin{equation}\label{e.0605.2}
   u(t, 2t + \sqrt{t}) \geq \delta_1 \sqrt t e^{-\sqrt t} t^{-\tfrac32} \quad \text{ for }t \geq T_1. 
\end{equation}
Next, let $\Phi_{\min, 1}(s)$ and  $\tilde\Phi(s)$ be the traveling wave solution of $2\Phi'+\Phi''+\Phi(A-\Phi)=0$ connecting the equilibrium states $A$ and $0$, with $A =1$ and $1-a$, respectively. Then there exist $B>0$ and $\tilde B>0$ such that
\[
\Phi_{\min, 1}(s) \sim B s e^{-s} \quad \text{ and }\quad \tilde\Phi(s) \sim \tilde B e^{-(1-\sqrt a)s} \quad \text{ as }s \to +\infty,
\]
where we can further normalize these functions such that $\Phi_{\min, 1}(0) = \tfrac12$ and $\tilde\Phi(0) = \tfrac12 (1-a)$. Since $\tilde\Phi(-\infty) = 1-a$ and $\tilde\Phi(+\infty) = 0$, we can choose $\ell>0$ such that
\[
\inf_{x \leq -\ell}\tilde\Phi(x) \geq  \frac{2(1-a)}{3}  \qquad \text{ and }\qquad \sup_{x \geq \ell}\tilde\Phi(x) \leq  \frac{1-a}3 .
\]
We define the following subsolution $\underline u(t,x) = V(x-2t + \tfrac32 \log t)$, where
\begin{equation}
   V(y):= \begin{cases}
  \tfrac{1-a}2 \Phi_{\min, 1}(y - x_1)     &\text{ for }y > 2\ell,\\
  \max\{\tfrac{1-a}2 \Phi_{\min, 1}(y - x_1) , \tilde\Phi(y - \ell)\}  &\text{ for }0\leq y \leq 2\ell,\\
  \tilde\Phi(y - \ell)&\text{ for }y < 0,
   \end{cases} 
\end{equation}
for some constant $x_1>0$.
We can ensure that $V(y)$ is continuous, thanks to the choice of $\ell$ above and by choosing $x_1>0$ large enough so that $\tfrac{1-a}2 \Phi_{\min, 1}(y - x_1) \in [\tfrac{1-a}{3},\tfrac{2(1-a)}{3}]$ for $y \in (-\infty,2\ell]$. Since $X(t) \leq 2t -\tfrac32 \log t$, we can check  that for any $\delta \in (0,1)$, $\delta \underline{u}(t,x)$ is a subsolution of \eqref{main-eq}. 

Using the asymptotic properties of $\Phi_{\min,1}$, we have
\begin{align*}
\underline{u}(t,2t  + \sqrt t) \sim B\tfrac{1-a}{2}(\sqrt t +\tfrac32 \log t -x_1) e^{-(\sqrt t +\tfrac32 \log t -x_1)} \quad \text{ as } t\to\infty,
\end{align*}
which implies that for some $B'>0$,
\begin{align*}
\underline{u}(t,2t  + \sqrt t) \sim B' \sqrt te^{-\sqrt t}  t^{-3/2} \quad \text{ as } t\to\infty.
\end{align*}
Together with \eqref{e.0605.2}, we can then choose $0<\delta \ll 1 $ such that 
\[
\delta \underline{u}(t,2t+\sqrt{t}) \leq u(t,2t+\sqrt{t})\quad \text{ for }   t\geq T_1.
\]
Moreover, since $u(T_1,x)>0$ for all $x\in\mathbb R$,
$\liminf_{x\to-\infty}u(T_1,x)>0$, and
$\underline u(T_1,\cdot)$ is bounded, by decreasing $\delta>0$ if necessary, we may also ensure that
\[
\delta\underline u(T_1,x)
\leq
u(T_1,x)
\qquad\text{for all }x\leq 2T_1+\sqrt{T_1}.
\]
Consequently,  we can choose $0<\delta \ll 1 $ such that $\delta \underline{u}(t,x) \leq u(t,x)$ 
in the set
\[
\{(t,x):~ t \geq T_1,~ x = 2t + \sqrt t\} \cup \{(t,x):~ t = T_1,~ x \leq 2T_1 + \sqrt{T_1}\}.
\]
Hence, the maximum principle yields $u(t,x) \geq \delta \underline{u}(t,x)$ for $t \geq T_1$ and $x \leq 2t + \sqrt t$. This implies that
\[
\xi_b(t) \geq 2t - \tfrac32 \log t  - C \quad \text{ for }t \gg 1.
\]
The rest of the theorem follows as before.
\end{proof}
}

\section*{Acknowledgements}

The authors dedicate this paper in memory of Prof. Ka-Shing Lau, who was professor and chairman of the Department of Mathematics at the Chinese University of Hong Kong. 
KYL is supported partly by the National Science Foundation grant DMS-2325195. 
CHW is supported partly by the National Science and Technology Council of Taiwan (NSTC 113-2628-M-A49-004-MY4, 113-2918-I-A49-002) and the National Center for Theoretical Sciences. CHW also thanks the Mathematical Research Institute (MRI) at the Ohio State University for the hospitality during his sabbatical
visit in 2024-2025. This work was partially completed when the authors attended the Workshop on ``Dynamic Models in Biology" (24w5209) at the Banff International Research Station (BIRS), and the workshop on ``Mathematical Modeling of Biological Interfacial Phenomena" at the
Institute for Mathematical and Statistical Innovation (IMSI) (supported by National Science Foundation Grant No. DMS-1929348). The authors thank BIRS and IMSI for the support and stimulating research environments.

\appendix

\section{Appendix: Proof of Lemma~\ref{e.a.lem.1}}\label{sec.A.1}

\begin{proof}[Proof of Lemma~\ref{e.a.lem.1}]
Consider 
\begin{equation*}
\begin{cases}
\dps\phi_t=\phi_{yy}+\Big(\beta-\frac{\eta}{t+t_0}\Big)\phi_y &\text{ for }t>0,~ y>0,\\
\phi(0,y) = \phi_0(y)\quad \text{ for }y>0,\quad \qquad &\phi(t,0)=0 \quad \text{ for }t>0,
\end{cases}
\end{equation*}
where the initial data $\phi_0$ is nonnegative, nontrivial, and 
compactly supported in $(0,\infty)$.

Introduce, as in the proof of \cite[(20) in Lemma 2.1]{hamel2013short},  
the self-similar variables
\beaa
\tau:=\ln\Big(\frac{t+t_0}{t_0}\Big), \quad z:=\frac{y}{\sqrt{t+t_0}}
\eeaa
and define $v(\tau,z)$ by
\bea\label{v-phi}
v(\tau,z)=
\exp\Big\{-(\frac{\beta}{2}\eta-1) 
\ln(\frac{t+t_0}{t_0}) \Big\}e^{\frac{\beta^2}{4} t+\frac{\beta}{2}y}\phi(t,y).
\eea
Then one can verify by direct calculation that $v(\tau,z)$ satisfies\footnote{This is equation (22) in \cite{hamel2013short}. Note that the variables $(\tau,y)$ in \cite{hamel2013short} correspond to  $(\tau,z)$ here.} 
\bea\label{v-tau-z-sys}
\begin{cases}
v_{\tau}-v_{zz}-\frac{z}{2}v_z-v=-\ep e^{-\tau/2}v_z,&\quad t>0, \ z>0,\\
v(\tau,0)=0,&\quad t>0.
\end{cases}
\eea
where $\ep:=\eta /\sqrt{t_0}$ \footnote{ Note that the parameters $(\lambda^*,r)$ in \cite{hamel2013short} corresponds respectively to  $(\frac{\beta}{2},\frac{\beta}{2}\eta)$ here.} (Here we choose $t_0\gg1$ such that $\ep<1$.).
By \cite[Lemma 2.2]{hamel2013short},
there exists $\ep_0>0$ such that, for all compact sets $K$ of $[0,\infty)$, there is $C_K>0$ such that for any $\ep\in(0,\ep_0)$,
\bea\label{v-from-hamel}
v(\tau,z)=z\Big(\frac{e^{-z^2/4}}{2\sqrt{\pi}}(\int_0^{\infty}\xi v_0(\xi)d\xi+O(\ep))+e^{-\tau/2}\tilde{v}(\tau,z)\Big),\ \tau>0,\ z>0.
\eea
It is also proved that for each compact interval $K \subset [0,\infty)$, there exists $C_K$ such that
\bea\label{tilde-v-est}
|\tilde{v}(\tau,z)|\leq C_Ke^{-z^2/8} \qquad\text{ for all } 
 \tau \geq 1, \ z\in K.
\eea

For our purpose, we shall explain in the following that $C_K$ can in fact be chosen independent of $K$. More precisely, we claim that there exists $\hat{C}>0$ such that
\bea\label{v-tilde-est2}
 | \tilde{v}(\tau,z) |\leq \frac{\hat{C}}{1+z}e^{-z^2/8} \qquad \text{ for all } \tau\geq 1, \ z\geq 0.
\eea
To this end, as in \cite[Lemma 2.2]{hamel2013short}, let us set
\bea \label{e.vwvw}
v(\tau,z):= e^{-z^2/8} w(\tau,z),
\eea
and
the function ${w}(\tau,z)$ satisfies the equation (see \cite[eqn. (23)]{hamel2013short}) 
\bea
{w}_\tau+Mw = -\ep e^{-\tau/2}(w_z-\frac{z}{4}w),\quad  \tau>0, \ z>0,\quad w(\tau,0)=0.
\eea
where $M$ is a self-adjoint operator defined by 
\beaa
M:=-\partial_{zz}+(\frac{z^2}{16}-\frac{3}{4}).
\eeaa


Define $\tilde{w}$ by projection in the $z$ variable 
\bea\label{e.wwww}
w=\langle w, e_0 \rangle_{L^2([0,\infty))} e_0+\tilde{w},
\eea
where $e_0(z):=ze^{-z^2/8}/{(2\sqrt{\pi}))^{1/2}}$ is the normalized 
principal eigenfunction of $M$.

From \cite[eqn. (27)]{hamel2013short}, $\tilde{w}$ satisfies 
\bea\label{w-tilde-eq}
\tilde{w}_{\tau}+M\tilde{w}
=-\ep e^{-\tau/2} \Big(F(\tau,z)+\tilde{w}_z-\frac{z}{4}\tilde{w}\Big),
\eea
where $F(\tau,z)= \langle (e_0)_z + \tfrac{z}4 e_0, w(\tau,\cdot)\rangle e_0 + \langle e_0, w(\tau,\cdot)\rangle ((e_0)_z- \tfrac{z}4 e_0) $ satisfies
\bea
\label{e.0314.1}
\|F\|_{L^\infty((0,\infty)\times (0,\infty)}\leq C_0
\eea
since $\sup_{\tau > 0} \|w(\tau, \cdot)\|_{L^2} \leq C$ 
for some $C>0$ depending only on $\phi(0,y)$
(thanks to \cite[eqn. (25)]{hamel2013short}).
Furthermore, from \cite[p.282]{hamel2013short} there exists $C_1>0$ depending only on $\phi(0,y)$ such that   
\begin{equation}
\label{e.0310.2}
\|\tilde{w}(\tau,\cdot)\|_{L^2([0,\infty))}\leq C_1 e^{-\tau/2}.    
\end{equation}

We now prove that there exists $\hat{C}>0$ depending only on the initial data such that
 \bea\label{w-tilde-L-infty}
 \sup_{z \geq 1}|\tilde w(\tau,z)| \leq \frac{1}{2}\hat{C} e^{-\tau/2} \quad \text{ for all } \tau \geq 1.
 \eea
From \eqref{w-tilde-eq} and 
  using $\tfrac{z^2}{16} + \ep \tfrac{z}{4} \geq -\frac{\ep^2}{4}$, the function $\tilde{w}$ satisfies
\begin{equation}
    \tilde{w}_\tau - \tilde{w}_{zz} + \ep e^{-\tau/2} \tilde{w}_z -  \frac{\ep^2 + 3}{4} \tilde w \leq e^{-\tau/2}F(\tau,z) \quad \text{ for }\tau>0,~ z>0.
\end{equation}
By the local maximum principle (e.g. \cite[Theorem 7.36]{lieberman1996second}), there exists $C_2>1$ independent of $\tau\geq 1$ and  $z_0 \geq 1$ such that
\bea
 &&\|\tilde{w}(\tau,\cdot)\|_{L^\infty ( [ z_0-1/2, z_0+1/2])}\notag \\
 &\leq& C_2 \Big(\|\tilde{w}\|_{L^1 ( (z_0-1, z_0+1)\times (\tau-1,\tau])} 
 + \|e^{-\tau/2}F\|_{L^\infty( [ (z_0-1, z_0+1)\times (\tau-1,\tau])}\Big) \notag\\
  &\leq& C_2 e^{-\tau/2}\Big(e^{\tau/2}\|\tilde{w}\|_{L^1 ( (z_0-1, z_0+1)\times (\tau-1,\tau])} 
 + e^{1/2}\|F\|_{L^\infty( [ (z_0-1, z_0+1)\times (\tau-1,\tau])}\Big) \label{e.0310.3}
\eea
Next, we apply Cauchy-Schwarz inequality and \eqref{e.0310.2}  to get
 \begin{align*}
 \|\tilde{w}\|_{L^1 ( [ z_0-1, z_0+1]\times (\tau-1,\tau])}&\leq  \int_{\tau-1}^\tau \sqrt{2}\|\tilde{w}(\tau',\cdot)\|_{L^2}\,d\tau' \leq \sqrt{2}C_1 e^{-(\tau-1)/2} = C_3 e^{-\tau/2},
\end{align*}
where $C_3=2C_1 e^{-1/2}$ is independent of $\tau\geq 1$ and  $z_0 \geq 1$. Substituting this and \eqref{e.0314.1} into \eqref{e.0310.3}, we have proved that for some $\hat{C}>0$, \eqref{w-tilde-L-infty} holds. Finally, using the relation between $\tilde{v}$ and $\tilde{w}$ contained in \eqref{v-from-hamel}, \eqref{e.vwvw} and \eqref{e.wwww} 
(see also \cite[p.282]{hamel2013short}),
\beaa
 \tilde{v}(\tau,z)=\frac{\tilde{w}(\tau, z)e^{-z^2/8+\tau/2}}{z},
\eeaa
we deduce from \eqref{w-tilde-L-infty} that 
\beaa
 | \tilde{v}(\tau,z) |\leq \frac{\hat{C}}{2z}e^{-z^2/8}\leq \frac{\hat{C}}{1+z} e^{-z^2/8} \quad \text{ for all } t\geq 1, \ z\geq 1.
\eeaa
By enlarging $\hat C$ if necessary, we may combine
 with \eqref{tilde-v-est} (with $K=[0,2]$ therein) to ortain \eqref{v-tilde-est2}.
Combining \eqref{v-phi}, \eqref{v-from-hamel}, and the estimate \eqref{v-tilde-est2}, we ortain \eqref{hatphi-formula} with \eqref{e.a.1a}.

We now prove \eqref{e.a.2} and \eqref{e.a.3}. For \eqref{e.a.2},
we see from \eqref{hatphi-formula} that 
\bea\label{phi-ratio}
\frac{{\phi}(t,y)}{Ct_0^{1-\frac{\beta}{2}\eta}(t+t_0)^{\frac{\beta}{2}\eta-\frac{3}{2}} y e^{-\frac{\beta}{2}y-\tfrac{\beta^2}4t}e^{-\frac{y^2}{4(t+t_0)}}}=1+e^{\frac{y^2}{4(t+t_0)}}h(t,y).
\eea
Thanks to \eqref{e.a.1a}, we have 
\bea\label{h-estimate}
\limsup_{t\to\infty}\sup_{0\leq y\leq \sqrt{1+t}}|e^{\frac{y^2}{4(t+t_0)}}h(t,y)|=0.
\eea
Combining \eqref{phi-ratio} and \eqref{h-estimate}, we immediately ortain \eqref{e.a.2}.

To see \eqref{e.a.3}, we first observe that 
$$\max_{[0,\infty)} (y e^{-(\beta/2)y})= \left[y e^{-(\beta/2)y}\right]_{y=\tfrac2{\beta}}  =\tfrac{2}{\beta} e^{-1}> \max_{[4/\beta,\infty)} (y e^{-(\beta/2)y})= \tfrac4{\beta} e^{-2}.$$  Considering that $\|h(t,\cdot)\|_{L^\infty([0,\infty))} \to 0$ as $t\to\infty$ (thanks to \eqref{e.a.1a}), we deduce that  for any sufficiently large 
$t$, $\sup_{y\geq 0}\phi(t,y)$
must be achieved in the region $\{(t,y):~0\leq y\leq \sqrt{1+t}\}$. Therefore, \eqref{e.a.3} follows directly from \eqref{e.a.2}. 
This completes the proof. Finally, we can choose for the inequalities \eqref{e.a.1a}-\eqref{e.a.3} the same (large) constant $C_1$ large enough depending on the initial data $\phi_0$. 
\end{proof}

{\small 

}
\end{document}